\definecolor{Purple}{rgb}{.7,0.08,0.6} 
\theoremstyle{plain}
\newtheorem{Thm}{Theorem}
\newtheorem{Cor}[Thm]{Corollary}
\newtheorem{Prop}[Thm]{Proposition}
\newtheorem{Lem}[Thm]{Lemma}
\theoremstyle{definition}
\newcommand{\B}{\mathbf B}
\newcommand{\Bt}{\tilde\B}
\renewcommand{\d}{{\tt{d}}}
\newcommand{\Dep}{\mathscr{L}_\epsilon}
\newcommand{\Do}{\mathcal{L}_0}
\renewcommand{\l}{\lambda}
\newcommand{\g}{g}
\newcommand{\G}{\eta}
\renewcommand{\ge}{g_\epsilon}
\newcommand{\go}{g_0}
\renewcommand{\r}{\rho}
\newcommand{\p}{\gamma}
\newcommand{\Atr}{\mathcal A}
\newcommand{\Atre}{\Atr_\epsilon}
\renewcommand{\Re}{\operatorname{Re}}
\renewcommand{\Im}{\operatorname{Im}}
\renewcommand{\bar}{\overline}
\renewcommand{\tilde}{\widetilde}
\newcommand{\Atrei}{\mathcal B^{\iota}_\epsilon}
\newcommand{\C}{\mathbb C}
\newcommand{\Cin}{\mathbf C}
\newcommand{\CinI}{\mathbf C^1}
\newcommand{\CinII}{\mathbf C^2}
\newcommand{\Cins}{\mathbf C^\sharp}
\newcommand{\Cine}{\mathbf{C}_\epsilon}
\newcommand{\Cker}{C}
\newcommand{\CkerI}{C^1}
\newcommand{\CkerII}{C^2}
\newcommand{\Ctr}{\mathcal C}
\newcommand{\Ctre}{\mathcal C_\epsilon}
\newcommand{\Ctred}{\Ctr_{\epsilon}^{\dagger}}
\newcommand{\Ctrei}{\Ctr_{\epsilon}^{\iota}}
\newcommand{\Ctreid}{(\Ctr_{\epsilon}^{\iota})^\dagger}
\newcommand{\Ctrs}{\mathcal C^\sharp}
\newcommand{\Ctrse}{\mathcal C^\sharp_\epsilon}
\newcommand{\Ctrsd}{\mathcal C^{\sharp, \delta}}
\newcommand{\Ctrsei}{\mathcal C^{\sharp, s}_\epsilon}
\newcommand{\Dsei}{\mathcal A^{s}_\epsilon}
\newcommand{\Ein}{\mathbf{E}}
\newcommand{\Etr}{\mathcal E}
\newcommand{\Etre}{\mathcal E_\epsilon}
\newcommand{\Etrei}{\mathcal R_{\epsilon}^{\iota}}
\newcommand{\Etreid}{(\mathcal R_{\epsilon}^{\iota})^\dagger}
\newcommand{\h}{\mathfrak h}
\newcommand{\ha}{\mathfrak h^*}
\newcommand{\hda}{\mathfrak h_\delta^*}
\newcommand{\hp}{\mathcal{H}^p}
\newcommand{\htwom}{\mathcal{H}^2(\bd D, \omega\,d\sigma)}
\renewcommand{\iota}{s}
\newcommand{\Ksei}{A^{s}_\epsilon}
\newcommand{\Rin}{\mathbf{R}}
\newcommand{\Rins}{\mathbf{R}^\sharp}
\newcommand{\Rtr}{\mathcal R}
\newcommand{\Rtrei}{\mathcal R_\epsilon^\iota}
\newcommand{\Rtrs}{\mathcal R^\sharp}
\newcommand{\Rtrse}{\mathcal R^\sharp_\epsilon}
\newcommand{\Rtrsei}{\mathcal R^{\sharp, \iota}_\epsilon}
\renewcommand{\S}{\mathcal S}
\newcommand{\Sone}{\mathcal S_1}
\newcommand{\Sl}{\mathcal S_{\!\,\lambda}}
\newcommand{\Sw}{\mathcal S_{\!\,\omega}}
\newcommand{\Swd}{\mathcal S_{\!\,\omega}^\dagger}
\newcommand{\Td}{T^\dagger}
\newcommand{\R}{\mathbb R}
\newcommand{\Cn}{\mathbb C^n}
\newcommand{\dee}{\partial}
\newcommand{\deebar}{\overline\partial}
\newcommand{\eps}{\epsilon}
\newcommand{\bndry}{b}
\newcommand{\wh}{\hat{w}}
\newcommand{\zp}{z}
\numberwithin{equation}{section}
\newcommand{\bd}{\bndry}
\begin{document}
\title
[Cauchy-Szeg\H o Projection]{The Cauchy-Szeg\H o projection
 for domains in $\Cn$ with minimal smoothness}
\author[Lanzani and Stein]{Loredana Lanzani$^*$
and Elias M. Stein$^{**}$}
\thanks{$^*$ Supported in part by the National Science Foundation, awards DMS-1001304 and 
DMS-1503612.}
\thanks{$^{**}$ Supported in part by the National Science Foundation, awards
DMS-0901040 and DMS-1265524.}
\address{
Dept. of Mathematics,       
Syracuse University 
Syracuse, NY 13244-1150 USA}
\email{llanzani@syr.edu}
\address{
Dept. of Mathematics\\Princeton University 
\\Princeton, NJ   08544-100 USA }
  \email{stein@math.princeton.edu}
  \thanks{2000 \em{Mathematics Subject Classification:} 30E20, 31A10, 32A26, 32A25, 32A50, 32A55, 42B20, 
46E22, 47B34, 31B10}
\thanks{{\em Keywords}: Cauchy Integral; T(1)-theorem; space of homogeneous type; Leray-Levi measure; Cauchy-Szeg\H o projection; Hardy space; Lebesgue space; pseudoconvex domain; minimal smoothness}
\begin{abstract} 
We prove $L^p(\bndry D)$-regularity of the Cauchy-Szeg\H o pro-

\noindent jection$^1$ for bounded domains $D\subset\Cn$ whose boundary satisfies the minimal regularity condition of class $C^2$, together with a naturally occurring notion of convexity. 
\end{abstract}
\maketitle

\section{Introduction}
The purpose of this paper is to prove the $L^p$-regularity of the Cauchy-Szeg\H o projection\footnote{also known as {\em Szeg\H o projection}.}
for domains $D \subset \Cn$
  with $n\geq 2$,
whose boundary is subject to minimal smoothness hypotheses. 
In recent years, the
 study of the 
basic domain operators, 
such as the Cauchy integral and the Bergman projection, has been 
undertaken
in the context of minimal smoothness.
 However, the corresponding question for the
Cauchy-Szeg\H o projection, which raises a number of serious different issues, 
has hitherto not been broached.
\subsection{Background} 
The Cauchy-Szeg\H o projection $\S$ is defined as the orthogonal projection of $L^2(\bndry D)$
onto the holomorphic Hardy space $\hp (\bndry D)$, 
which is the closure of the subspace of functions on $\bndry D$ that arise as restrictions of functions continuous on $\bar D$ and holomorphic in $D$. A list of some earlier relevant papers includes \cite{PS}, \cite{KS-2}, \cite{AS-1}, \cite{BoLo}, \cite{NRSW}, \cite{Cu}, \cite{H}, \cite{FH-1}, \cite{FH-2}, \cite{MS-2}, \cite{CD}, \cite{Ko-1}, \cite{HNW}.

\smallskip

$\bullet$ Let us first recall some relevant facts for $\Cn$ when $n=1$ (the planar setting). In the special
case when $D$ is the unit disc, $\S$ is in fact the Cauchy integral, and its $L^p$ estimate
is the classical theorem of M. Riesz. For more general $D\subset\C$
the matter can be briefly put as follows: the lower limit of smoothness of $\bndry D$ that can assure the $L^p$-boundedness of $\S$ is of one order of differentiability. More precisely, if
$\bndry D$ is of class $C^1$, then $\S$ is bounded on 
$L^p(\bndry D)$,
for $1<p<\infty$. 
However if $\bndry D$ is merely Lipschitz, then the conclusion holds for a range $p_D<p<p_D'$, where $p_D$ depends on the Lipschitz bound of $D$, and in any case $1<p_D<4/3$. There are two ways of achieving these 
results. The first is by conformal mapping (hence essentially for simply connected domains)
see \cite{S}, \cite{B}, \cite{LS-1} and references therein. The second depends on the 
$L^p$-theory of the Cauchy integral $\Cin$ and its boundary transform $\Ctr$, see \cite{C} and \cite{CMM}, and proceeds via the identity
\begin{equation}\label{E:I1}
\Ctr = \S(I-\Atr), \quad \mathrm{with}\ \ \Atr = \Ctr^*-\Ctr\, ,
\end{equation}
where $\Ctr^*$ is the adjoint of $\Ctr$ on
$L^2(\bndry D)$,
and $I$ is the identity operator. The issue then becomes the possible invertibility of $I-\Atr$ in 
$L^2(\bndry D)$, see \cite{KS-1}, \cite{LS-1}.
\smallskip

$\bullet$ Turning to the case when $D\subset \Cn$ with $n>1$, one is immediately faced with 
several obstacles not seen when $n=1$. 

{\em(a)}\quad The fact that the requirement of pseudo-convexity of the domain $D$ must
necessarily arise. Since pseudo-convexity is a notion essentially bearing on 
the second fundamental form of $\bndry D$, it is reasonable to expect minimal smoothness
to be ``near $C^2$'', as opposed to ``near $C^1$'' for $n=1$.

{\em(b)}\quad The analogue of the approach via conformal mapping is not viable for multiple
reasons, one of which is that holomorphic equivalence of domains in $\Cn$ is highly restrictive
when $n>1$, and therefore not applicable to general classes of domains.

{\em(c)}\quad The possible use of an identity like \eqref{E:I1} is problematic, because when $n>1$ there are infinitely many Cauchy integrals that present themselves, while no one seems appropriate for a direct use of \eqref{E:I1} (unless $D$ is in fact relatively smooth \cite{KS-2}).

{\em (d)}\quad It would be possible to prove the $L^p$-boundedness of $\S$ by the Calder\`on-Zygmund paradigm if we had a satisfactory description of the kernel of this operator. However, the asymptotic formula of Fefferman which would do this (analogous to his well-known description of the Bergman kernel \cite{F}) requires that the domain be relatively smooth, which is not the case in what follows below.
\subsection{Main result}\label{SS:MR}
 To state our result we need to make the definition of the Cauchy-Szeg\H o projection precise.  If $d\sigma$ denotes the induced Lebesgue measure on $\bndry D$, 
we write $L^p(\bndry D)$ for $L^p(\bndry D, d\sigma)$. In addition, if
$d\omega$ is any measure on $\bndry D$ of the form $d\omega =\omega d\sigma$, where the density $\omega$ is a strictly positive continuous function on $\bndry D$, then one can consider 
$L^p(\bndry D, d\omega)$, but note that this space contains the same elements as
$L^p(\bndry D, d\sigma)$ and the two norms are equivalent. Thus we will continue to denote both of these spaces by $L^p(\bndry D)$. However, the distinction between $L^2(\bndry D, d\sigma)$ and $L^2(\bndry D, d\omega)$ become relevant when defining the Cauchy-Szeg\H o projection, because these spaces have different inner products that give different notions of orthogonality. 
So
the Cauchy-Szeg\H o projection $\Sw$ is the orthogonal 
projection of $L^2(\bndry D, \omega d\sigma)$ onto $\htwom$. Note that $\Sone$ is the ``$\S$'' discussed above, but there is no simple connection between the general $\Sw$ and $\Sone$. Nevertheless, the case when $d\omega$ is the Leray-Levi measure that arises below, is key to understanding the general result. It states:
\smallskip

{\em Suppose $D\subset\Cn$, $n\geq 2$, is a bounded domain whose boundary is of class $C^2$ and is strongly pseudo-convex. Then $\Sw$ is a bounded operator on $L^p(\bndry D, d\omega)$, for $1<p<\infty$.}
\smallskip

There are five main steps in the proof. 
\smallskip

{\em Step 1: Cauchy integrals.}\quad For our purposes a Cauchy integral $\Cin$ is an operator mapping functions on $\bndry D$ to functions on $D$ given by
\begin{equation}\label{E:I2}
\Cin (f)(z) =\int\limits_{w\in\bndry D}\!\!\! f(w)\, C(w, z),\quad z\in D,
\end{equation}
with the following properties:
\begin{itemize}
\item[(i)] $\Cin$ produces holomorphic functions. More precisely, if $f$ is integrable on $\bndry D$ then $\Cin (f)$ is holomorphic in $D$.
\item[]
\item[(ii)] $\Cin$ reproduces holomorphic functions. That is,
$$
\Cin (f)(z) = F(z),\quad z\in D,
$$
whenever $F$ is holomorphic in $D$ and continuous on $\bar D$, and 
$\displaystyle{F\big |_{\bndry D} =f}$.
\item[]
\item[(iii)]
  The kernel $C(w, z)$
  is 
  ``explicit'' enough to allow ``relevant'' computations.
\end{itemize}
A general method for obtaining integrals that satisfy the requirements (ii) and (iii) (but not necessarily (i)) is that of the Cauchy-Fantappi\` e formalism, see e.g.,  \cite{LS-3}. The point of departure
is a ``generating'' form
$$
G(w, z) = \sum\limits_{j=1}^nG_j(w, z)\, dw_j
$$
that satisfies
\begin{equation}\label{E:I2a}
g(w, z)=\langle G(w, z), w-z\rangle \neq 0
\end{equation}
if $z\in D$ and $w$ is in a small neighborhood of $\bndry D$. (Here, 
$\langle G(w, z), w-z\rangle$ denotes the action of $G(w, z)$ on the vector $w-z$.)
 Then the corresponding Cauchy integral is defined by \eqref{E:I2}, with
 $$
 C(w, z) = \frac{1}{(2\pi i)^n}\frac{G\wedge (\deebar_w G)^{n-1}}{g(w, z)^n}\, .
 $$
 One observes that when $n=1$, there is only one such integral kernel (namely the familiar Cauchy kernel
 $C(w, z) = dw/2\pi i(w-z)$), while for $n\geq 2$ there are infinitely many such kernels. Moreover, the existence of a generating form that satisfies property (i.) is closely related (by way of the so-called Levi problem)
 to the requirement that $D$ be pseudo-convex, and such forms have been constructed only when $D$ is actually strongly pseudo-convex (and relatively smooth), see \cite{H}, \cite{R}, \cite{KS-2}. These constructions take for $z\in D$ near $w\in\bndry D$ and $\rho$  a defining function of $D$
 \begin{equation}\label{E:I3}
 G(w, z) =\dee\rho (w) -\frac12\sum\limits_{j, k=1}^n\frac{\dee^2\rho (w)}{\dee w_j\dee w_k}(w_k-z_k)\, dw_j\, ,
 \end{equation}
 and then extend $G(w, z)$ to $z\in D$ by differing methods. It is to be noted that the function 
 $g(w, z)$ that results from this choice of $G(w, z)$ via the construction \eqref{E:I2a} is the Levi polynomial at $w\in\bndry D$.
 
 Now if we take \eqref{E:I3} as our starting point we immediately run into a first obstacle. Since we have assumed that $D$ (that is the function $\rho$) is of class $C^2$, the denominator $\g^n$ in $C(w, z)$ above, cannot be guaranteed any degree of smoothness in $w$, beyond continuity in that variable. But all known methods of proving $L^2$ (or $L^p$) boundedness 
 of singular integrals (that is, the $T(1)$ theorem and its variants) require some degree of
 smoothness of the kernel away from the diagonal $\{w=z\}$. One way to get around this difficulty is to replace the matrix $\displaystyle{\{\dee^2\rho/\dee w_j\dee w_k\}}$ appearing in
 \eqref{E:I3} by a $C^1$-smooth matrix (as is done in \cite{Ra}). However the Cauchy integral constructed this way would not be of any substantial use to us. What we require below 
 is a {\em family} of Cauchy integrals $\{\Cine\}_\epsilon$, where for each $\epsilon$ we have replaced $\displaystyle{\{\dee^2\rho/\dee w_j\dee w_k\}}$ by an appropriate $C^1$-smooth matrix $\displaystyle{\{\tau^\epsilon_{jk}\}}$ with uniform error on $\bndry D$ (less than $\epsilon$), and $\g$ is replaced by the corresponding $\{\ge\}_\epsilon$. However this approximation comes at a price, which increases as $\epsilon\to 0$, as we will see below. 
 
 {\em Step 2:  $L^p(\bndry D)$-regularity of the $\Ctre$.}\quad We apply the machinery of the general $T(1)$ theorem to to the operators $\Ctre$, which are the boundary restrictions of the Cauchy integrals $\Cine$, and to do this requires four things. First, we need
 a space of 
 homogeneous type that reflects the non-isotropic geometry of $\bndry D$.
 Second, we need to establish the difference inequalities
 for the kernel of $\Ctre$, and these can be achieved because we have replaced the matrix
  $\displaystyle{\{\dee^2\rho/\dee w_j\dee w_k\}}$ by a suitable approximation $\{\tau^\epsilon_{jk}\}_\epsilon$. The third item is the analysis of the  formal adjoint of $\Ctre$ with respect to the inner product of $L^2(\bndry D, d\l)$. Here $d\l$ is the Leray-Levi measure, defined as integration on $\bndry D$ with respect to the
 ($2n-1$)-form $\displaystyle{\dee\rho\wedge(\deebar\dee\rho)^{n-1}/(2\pi i)^n}$.
The analysis of such adjoint is done by identifying the ``essential part'' of $\Ctre$, which is given by $\Ctrse$, with
 $$
 \Ctrse (f) (z) =\int\limits_{w\in\bndry D}\!\!\frac{f(w)}{\ge (w, z)^n}\, d\l (w)\, .
 $$
  
 The fourth item is the proof of the required ``cancellation'' properties of of $\Ctrse$, and these are expressed
 as the action of  $\Ctrse$ on ``bump functions''. For this the key observation is that, unlike the case $n=1$, whenever $f\in C^1(\bndry D)$, then
 $$
 \Ctrse (f) = \Etre (df) + \Rtrse (f)
 $$
 where both the kernels of $\Etre$ and $\Rtrse$ have a singularity weaker by one order than 
 that of $\Ctrse$. 
 This concludes the proof of the regularity of $\Ctre$ on $L^p(\bndry D, d\l)$ for any 
 $1<p<\infty$. (We will return to the general case: $L^p(\bndry D, \omega d\sigma)$ in Step 5 below.)
 \smallskip
 
 {\em Step 3: Relating the Cauchy-Szeg\H o projection to the $\Ctre$'s.}\quad
 At this point we establish an analogous formulation of the original identity \eqref{E:I1} for the Cauchy-Szeg\H o projection relative to the space $L^2(\bndry D, d\l)$, which we denote $\S_{\l}$, namely the identity:
 \begin{equation}\label{E:Ia}
 \S_{\l} (I-\Atre) = \Ctre
 \end{equation}
 where $\Atre = \Ctre^*-\Ctre$, and $^*$ denotes the adjoint in $L^2(\bndry D, d\l)$. 
 At this stage we rely on some results on the Hardy space $\mathcal{H}^2 (\bndry D)$ which will appear separately in \cite{LS-5}. 

 \smallskip
 
 {\em Step 4: Proving the boundedness of $\S_\lambda$ on $L^p(\bndry D, d\l)$.}\quad 
 One would then like to invert the operator $I-\Atre$ that appears in \eqref{E:Ia} by a partial Neumann series,
but the fact is that for our $C^2$-smooth domains, the quantity
 $\|\Atre\|$ is unbounded as $\epsilon \to 0$, with $\|\cdot\|$ the operator norm acting on 
 any $L^p(\bndry D)$ space. This is the price we have to pay for replacing the original matrix
 $\displaystyle{\{\dee^2\rho/\dee w_j\dee w_k\}}$ with the smoother matrices $\{\tau^\epsilon_{jk}\}_\epsilon$. To surmount this difficulty we truncate our operator and write for any
 $s>0$ 
 \begin{equation*}
 \Ctre = \Ctrei +\Rtrei\, ,
 \end{equation*}
 where $\Ctrei$ has the same kernel as $\Ctre$, except that it is appropriately cut off to be supported in $\d (w, z)\leq s$, where $\d (w, z)$ is the non-isotropic quasi-distance for
  the space of homogeneous type described in Step 2 above.
 What this truncation achieves is the following important fact: if 
 $s$ is sufficiently small in terms of
 $\epsilon$, then 
 \begin{equation}\label{E:I5}
 \|(\Ctrei)^*-\Ctrei\|_{L^p\to L^p}
  \lesssim \epsilon^{1/2}M_p.
 \end{equation}
 (Here the adjoint is again taken with respect to $L^2(\bndry D, d\l)$, with $d\l$ the Leray-Levi measure.) This feature of $\Ctrei$ allows us to express \eqref{E:Ia} in the equivalent form
 \begin{equation} \label{E:I1b}
  \S_{\l} = \big(\Ctre +\S_{\l}\big((\Etrei)^* - \Etrei\big)\big)\, \big(I-\big((\Ctrei)^*-\Ctrei\big)\big)^{-1}, 
 \end{equation}
which is valid for $\epsilon$ sufficiently small in terms of $p$, and for
 $s$ sufficiently small in terms of $\epsilon$.  
 The complementary fact is that while the quantity
 $\|(\Rtrei)^*-\Rtrei\|_{L^p\to L^p}$ is not small (in fact, in general this is unbounded as $\epsilon\to 0$), one has the redeeming property that each of $\Rtrei$ and $(\Rtrei)^*$ maps: 
 $L^1(\bndry D, d\l)$ to $L^\infty (\bndry D)$ (in fact to $C(\bndry D)$) for each $\epsilon$ and $s$. 
  Taking all this into account
  we conclude from \eqref{E:I1b} that $\S_{\l}$ is bounded on $L^p(\bndry D, d\l)$ for each 
  $1<p<\infty$.
 \smallskip
 
 {\em Step 5: Passage to $\Sw$ for general $\omega$.} \quad First we note that the boundedness of the $\Ctre$ on $L^p(\bndry D, d\l)$ immediately gives their boundedness on
 $L^p(\bndry D, \omega d\sigma)$ via the remarks that we made before.  While the corresponding result 
 for the Cauchy-Szeg\H o projections  cannot be obtained in the same way, 
 the main idea for $\Sw$  is as follows. 
 If $^\dagger$ denotes the adjoint with respect to the inner product of $L^2(\bndry D, \omega d\sigma)$, and if we write $d\omega =\varphi\, d\l$, then the fact that $(\Ctrei)^\dagger =\varphi (\Ctrei)^*\varphi^{-1}$, and the continuity of $\varphi$, allow us to obtain from \eqref{E:I5} that
 \begin{equation}\label{E:I6}
 \|(\Ctrei)^\dagger-\Ctrei\|_{L^p\to L^p}
  \lesssim \epsilon^{1/2}M_p.
 \end{equation}
 With these things in place one can then proceed as in Step 5.
 
 It is worthwhile  to point out that the original Cauchy-Szeg\H o projection $\S_1$ (defined with respect to
 the induced Lebesgue measure $d\sigma$) is included here, because it turns out that 
 $d\sigma=\varphi_0\,d\l$ where the density $\varphi_0$ is continuous and positive on account
 of the strong pseudo-convexity and $C^2$-regularity of $D$. 
 \subsection{Organization of the paper} 
 This paper consists of two parts. Part I deals with the Cauchy integrals, and it includes sections 2 through 4.
 Sections \ref{S:2} and \ref{S:3} contain a review of preliminary facts that are needed.
 Since in the main these were also used in our work on the Bergman projection \cite{LS-2} and the Cauchy-Leray integral \cite{LS-3} and \cite{LS-4}, the details of the proofs are for the most part omitted. Section \ref{S:4} contains the $L^p$-theory of the Cauchy integrals $\Cine$ and the corresponding boundary transforms $\Ctre$.
 
  In Part II we turn to the Cauchy-Szeg\H o projection.
  In Section \ref{S:5} we prove
 the key fact \eqref{E:I5} (Proposition \ref{P:6.2.2}), which proves the main result in the context of the Leray-Levi measure (Theorem \ref{T:6.1.1}). Section \ref{S:last} then concludes by dealing with the general case (Theorem \ref{T:6.1.2}). The argument deducing \eqref{E:I6} from \eqref{E:I5} rests on a general result (Lemma \ref{L:7.3}) involving operators whose kernels have support appropriately close to the diagonal.
 \smallskip
 
 {\em Acknowledgement.}\ We are grateful to Po-Lam Yung for 
 proposing that we consider a more general form of Theorem \ref{T:6.1.1} that ultimately led us
 to Theorem \ref{T:6.1.2}.
\section*{Part I: Cauchy Integrals}\label{PI}

In this first part we undertake the study of a family $\{\Cine\}$ of Cauchy integrals that depend on a parameter
 $\epsilon$, and which are constructed using the Cauchy-Fantappi\`e formalism. Here we focus on the properties of $\Cine$ for fixed $\epsilon$. What happens when $\epsilon$ varies, in particular the behavior of $\Cine$ as $\epsilon \to 0$, will be studied in Part II, where this will play a key role in the understanding of the Cauchy-Szeg\H o projection.

\section{The fundamental denominators $\go$ and $\ge$}\label{S:2}
A number of results needed below that are known (see \cite{LS-3}, \cite{LS-4}, \cite{Ra}) are restated here without proof. 
 An exception is Proposition \ref{P:1} and its corollary. 

\subsection{The functions $\go$ and $\ge$}\quad
We consider a bounded domain $D$ in $\Cn$ with defining function $\r:\Cn \to \R$ of class $C^2$, for which
$D=\{\r<0\}$, and $|\nabla\r|>0$ where $\r=0$. We assume that $\r$ is strictly plurisubharmonic. 
The assumptions regarding the domain $D$ and $\r$ will be in force throughout, and so will not be restated below.

We let $\Do (w, z)$ be the negative of the Levi polynomial at $w\in\bndry D$, given by
\begin{equation*}
\Do (w, z) = \langle \dee\r (w), w-z\rangle -\frac12\sum\limits_{j, k}
\frac{\dee^2\r(w)}{\dee w_j\dee w_k}(w_j-z_j)(w_k -z_k)
\end{equation*}
where $\dee\r (w)$ is the 1-form $\sum\r_{w_j}\!(w) dw_j$,
and the
expression $\langle \dee\r (w), w-z\rangle$ denotes the action of $\dee\rho (w)$ on the vector $w-z$, that is
\begin{equation*}
\langle \dee\r (w), w-z\rangle = \sum\limits_j\frac{\dee\r (w)}{\dee w_j} (w_j-z_j)\, .
\end{equation*}
The strict plurisubharmonicity of $\r$ implies that 
\begin{equation*}
2\Re\Do (w, z)\geq -\r (z) + c|w-z|^2,\quad \mbox{for some}\ c>0,
\end{equation*}
whenever $w\in\bndry D$ and $z\in\bar D$ is sufficiently close to $w$. To ensure that this inequality may
 hold globally we make our first modification of $\Do$ and replace it with $\go$ given as 
 \begin{equation*}
 \go (w, z) =\chi \Do + (1-\chi)|w-z|^2.
 \end{equation*}
Here $\chi =\chi (w, z)$ is a $C^\infty$ cut-off function with $\chi =1$ when $|w-z|\leq \mu/2$ and 
$\chi =0$ if $|w-z|\geq\mu$. Then if $\mu$ is chosen sufficiently small (and then kept fixed throughout)
 we have that
 \begin{equation}\label{E:2.2}
 \Re \go(w, z) \geq c\,(-\r (z) + |w-z|^2)\quad 
 \end{equation}
for $z$ in $\bar D$ and $w$ in $\bndry D$, with $c$ a positive constant. 
 
 The modified Levi polynomial $\go$ is not yet quite right for our needs, because
 in general it has no smoothness beyond continuity in the variable $w$. So for each $\epsilon >0$ we consider a variant $\ge$ defined as follows. We find an $n\times n$ matrix $\{\tau^\epsilon_{jk}(w)\}$ of $C^1$ functions
 so that
 \begin{equation*}
 \sup\limits_{w\in\bndry D}
 \left|\frac{\dee^2\r(w)}{\dee w_j\dee w_k} - \tau^\epsilon_{jk}(w)\right| \leq \epsilon, \quad \mbox{for}\quad 1\leq j, k\leq n\, .
 \end{equation*}
 We then set 
 \begin{equation*}
 \Dep (w, z) =\langle\dee\r (w), w-z\rangle -\frac12\sum\limits_{j, k} \tau^\epsilon_{jk}(w)\, (w_j-z_j) (w_k-z_k)\, ,
 \end{equation*}
 and define
 \begin{equation}\label{E:2.3}
 \ge (w, z) = \chi\Dep + (1-\chi)|w-z|^2\quad  \mbox{for}\ z\, , w\in \Cn\, .
 \end{equation}
 Now $\ge$ is of class $C^1$ in $w$ (it is of class $C^\infty$ in $z$). 
 We note that 
 \begin{equation*}
 |\go (w, z) - \ge (w, z)|\lesssim \epsilon |w-z|^2\quad \mbox{for}\ w\in\bndry D\, ,
 \end{equation*}
 and hence if $\epsilon$ is taken sufficiently small (in terms of the constant $c$ appearing in \eqref{E:2.2})
  then automatically
  \begin{equation}\label{E:2.4}
  \Re \ge (w, z) \geq c'(-\r (z) + |w-z|^2),\quad \mbox{for}\ z\in\bar D,\ w\in\bndry D\, ,
  \end{equation}
  for an appropriate positive $c'$. 
  There is also the variant
  \begin{equation*}
  \Re \ge (w, z) \geq c'(\r (w)-\r (z) + |w-z|^2)
   \end{equation*}
  for $z$ and $w$ in a neighborhood of $\bndry D$. 
  We shall always assume that $\epsilon$ is restricted to be so small that 
  \eqref{E:2.4} holds. As a direct consequence of \eqref{E:2.2} we then also have
  \begin{equation}\label{E:2.5}
  |\ge (w, z)|\approx |\go (w, z)|
  \end{equation}
  where the constants implied in the inequality $\lesssim$ above, and the equivalence $\approx$, are independent 
  of $\epsilon$. 
\subsection{Special coordinate system}\label{SS:spec-coord}\quad
To obtain a better understanding of $\go$ and $\ge$ we introduce for each $w\in\bndry D$ a special coordinate system centered at $w$. We let $\nu_w$ denote the inner unit normal at $w$, so 
$\nu_w=-\nabla\r(w)/|\nabla\r (w)|$. We set $e_n = i\nu_w$, and take $\{e_1, \ldots, e_{n-1}, e_n\}$ to be an orthonormal basis of $\Cn$. Our coordinates of a point $z\in\Cn$ are then determined by
\begin{equation*}
z-w = \sum\limits_j z_je_j\, .
\end{equation*}
Note that then the coordinate $z_n = x_n + i y_n$ is intrinsically determined,
 as well as the length of the orthogonal complement, 
\begin{equation*}
|z'|=\left(\,\sum\limits_{j=1}^{n-1}|z_j|^2
\right)^{\!\!1/2}.
\end{equation*}
Using the fact that $2\,\dee\r (w) = -i|\nabla\r (w)|dw_n$, 
we see that 
\begin{equation*}
\langle\dee\r (w), w-z\rangle = -\frac{i}{2}|\nabla\r (w)|\, z_n
\end{equation*}
(see also \cite{LS-4}).
Looking back at \eqref{E:2.3} we then have
\begin{equation}\label{E:2.6}
\ge(w, z) = -ic_w\, z_n + Q(z)
\end{equation}
where $c_w = |\nabla\r (w)|/2$, and $Q(z) = Q^\epsilon_w(z)$ is a homogeneous quadratic polynomial in
$z_1,\ldots,z_n$. The only property of $Q$ that we need to know is that $|Q(z)|\leq c|z|^2$, with a 
constant $c$ independent of $\epsilon$ and $w$. 

The main estimates for $\ge (w, z)$ (and hence also for $\go (w, z)$) are contained in the following 
\begin{Prop}\label{P:1}
For each $w\in\bndry D$, we have
\begin{itemize}
\item[(i)]\quad $|\ge (w, z)| \approx |x_n| + |w-z|^2 + |\r (z)|,\quad \mbox{for}\ z\in\bar D$.
\end{itemize}
In particular, we have
\begin{itemize}
\item[(ii)]\quad $|\ge (w, z)| \approx |x_n| + |z'|^2, \quad \mbox{for}\ z\in \bndry D$.
\end{itemize}
The constants implicit in these equivalences are independent of $\epsilon$, $w$ and $z$.
\end{Prop}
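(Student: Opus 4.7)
The plan is to exploit the explicit representation \eqref{E:2.6}, $\ge(w,z) = -ic_w z_n + Q(z)$ with $|Q(z)| \leq c|w-z|^2$ and $c$ independent of $\epsilon$ and $w$, together with the global bound \eqref{E:2.4} on $\Re\ge$. Writing $z_n = x_n + iy_n$, one extracts
\begin{equation*}
\Re\ge(w,z) = c_w y_n + \Re Q(z), \qquad \Im\ge(w,z) = -c_w x_n + \Im Q(z),
\end{equation*}
and all of (i) reduces to controlling these two pieces in conjunction with a Taylor expansion of $\rho$ at $w$.

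For the upper bound in (i), $|\ge| \leq c_w(|x_n|+|y_n|) + c|w-z|^2$, so it suffices to control $|y_n|$. Taylor-expanding $\rho$ at $w$ and using the identity $\langle\dee\rho(w), w-z\rangle = -\tfrac{i}{2}|\nabla\rho(w)| z_n$ gives $\rho(z) = -2c_w y_n + O(|w-z|^2)$ uniformly in $w\in\bndry D$, hence $|y_n| \lesssim |\rho(z)| + |w-z|^2$. For the lower bound I split on the size of $|x_n|$. If $c_w|x_n| \leq 2c|w-z|^2$, then $|x_n|$ is absorbed into $|w-z|^2$, and \eqref{E:2.4} alone yields $|\ge| \geq \Re\ge \gtrsim |\rho(z)| + |w-z|^2 \gtrsim |x_n| + |\rho(z)| + |w-z|^2$. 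Otherwise, $|\Im\ge| \geq c_w|x_n| - c|w-z|^2 \geq c_w|x_n|/2$, and combined with $\Re\ge \gtrsim |\rho(z)| + |w-z|^2$ via $|\ge| \geq \tfrac12(\Re\ge + |\Im\ge|)$ this again yields $|\ge| \gtrsim |x_n| + |\rho(z)| + |w-z|^2$.

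For (ii), specializing (i) to $\rho(z) = 0$ gives $|\ge| \approx |x_n| + |w-z|^2$, and since $|w-z|^2 = x_n^2 + y_n^2 + |z'|^2$ with $x_n^2 \lesssim |x_n|$ (as $|x_n|$ is bounded on $\bar D$), the task reduces to showing $y_n^2 \lesssim |x_n| + |z'|^2$ for $z\in\bndry D$. Setting $\rho(z) = 0$ in the Taylor expansion forces $|y_n| \lesssim |w-z|^2$, so $y_n^2 \lesssim |w-z|^4$, which for $|w-z|$ small absorbs into $x_n^2 + |z'|^2$. The main obstacle is ensuring uniform constants when $|w-z|$ is not small: there the cutoff $\chi$ collapses $\ge$ to $|w-z|^2$, so both sides of (ii) are bounded above and below by positive constants, and one closes the equivalence by a compactness argument on $\bndry D\times\bndry D$ away from the diagonal—invoking the positivity of $|\ge|$ from \eqref{E:2.4} and a separate geometric check that $|x_n|+|z'|^2$ does not degenerate off the diagonal—yielding constants depending only on $D$.
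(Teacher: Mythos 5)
Your treatment of part (i) is correct and is essentially the paper's own argument: the same Taylor expansion yielding $|y_n|\lesssim|\r(z)|+|w-z|^2$, the same use of the decomposition \eqref{E:2.6} into real and imaginary parts, and the same appeal to \eqref{E:2.4} for the lower bound. The paper packages the lower bound as the two-sided equivalence $\Re\ge(w,z)\approx|\r(z)|+|w-z|^2$ together with $|x_n|\lesssim|\Im\ge(w,z)|+|w-z|^2$, rather than your case split on whether $c_w|x_n|\leq 2c|w-z|^2$, but the content and the $\epsilon$-uniformity of the constants are identical.

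Part (ii) is where there is a genuine gap, and it sits exactly at the step you deferred to ``a separate geometric check that $|x_n|+|z'|^2$ does not degenerate off the diagonal.'' That check cannot succeed. Take $D$ the unit ball and $z=-w$ the antipodal point: then $z-w=2\nu_w$ is parallel to the normal, so in the special coordinates $z'=0$ and $z_n=(z-w,e_n)=(2\nu_w, i\nu_w)=-2i$ is purely imaginary, whence $|x_n|+|z'|^2=0$ while $|\ge(w,z)|=|w-z|^2=4$. So the inequality $|\ge(w,z)|\lesssim |x_n|+|z'|^2$ fails outright in the regime $|w-z|\gtrsim 1$, and no compactness argument can produce the missing constant. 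The paper's own proof of (ii) is silently local: the absorption of $y_n^2\lesssim|w-z|^4$ into $x_n^2+|z'|^2$ requires $|w-z|^4\leq\tfrac12|w-z|^2$, i.e. $|w-z|$ small, and (ii) is only ever applied for $z$ in a small boundary ball $\B_r(w)$ (to prove the equivalence $\B_r(w)\approx\Bt_r(w)$), where the obstruction does not arise. The correct repair is therefore not your global patch but a restriction of scope: prove (ii) for $|w-z|\leq c_0$ with $c_0$ small (equivalently, for $\d(w,z)$ small), which your near-diagonal computation already does, and observe that this local form is all that is used in the sequel.
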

\begin{proof}
We begin by observing that $z=w$ we have
\begin{equation*}
\frac{\dee\r}{\dee z_j} =0,  j=1,\ldots, n-1; 
\quad 
\frac{\dee\r}{\dee x_n} =0;
\quad
\frac{\dee\r}{\dee y_n} = -|\nabla\r (w)|.
\end{equation*}
Thus, by Taylor's theorem,
\begin{equation}\label{E:2.7}
\r (z) = -|\nabla\r (w)|\,y_n + O(|z-w|^2),\quad \mbox{and therefore}
\end{equation}
\begin{equation*}
|y_n|\lesssim |\r (z)| + |w-z|^2.
\end{equation*}
Combining this with \eqref{E:2.6} and the comments thereafter gives
\begin{equation*}
|\Re \ge (w, z)|\lesssim |\r (z)| +|w-z|^2.
\end{equation*}
However when $z\in\bar D$ we have $-\r (z)\geq 0$, and so \eqref{E:2.4} grants the opposite
inequality, which yields
\begin{equation}\label{E:2.8}
\Re\ge (w, z)| \approx |\r (z)| + |w-z|^2.
\end{equation}
Moreover \eqref{E:2.6} immediately gives 
\begin{equation*}
\Im \ge (w, z) = -c_w\,x_n + O(|w-z|^2),\quad \mbox{which leads to}
\end{equation*}
\begin{equation}\label{E:2.9}
\left\{\begin{array}{rcl}
|\Im\ge (w, z)|&\lesssim&|x_n| + |w-z|^2,\quad\mbox{and}\\
\\
|x_n|&\lesssim& |\Im\ge (w, z)|+|w-z|^2.
\end{array}
\right.
\end{equation}
A combination of \eqref{E:2.9} with \eqref{E:2.8} then implies conclusion (i). 

The second conclusion 
follows from the first because $z\in\bndry D$ implies $\r (z)=0$, while $|w-z|^2 = |z_n|^2 + |z'|^2 = x_n^2 + y_n^2 + |z'|^2$, and
$x_n^2\approx |x_n|$ if $x_n$ is bounded, while by \eqref{E:2.7}, $|y_n|\lesssim |w-z|^2$, if $z\in\bndry D$.
\end{proof}
There are two consequences that can be drawn from Proposition \ref{P:1}. First, if $z$ and $w$ are both 
in $\bndry D$ then
\begin{equation}\label{E:2.10}
|\ge (w, z)|\approx |\Im\langle\dee\r (w), w-z\rangle| + |w-z|^2.
\end{equation}
In fact, by the above
\begin{equation*}
\frac{\dee\r (w)}{\dee z_n} = \frac{i}{2} |\nabla\r (w)|,\quad \mbox{and hence}\ 
\langle\dee\r (w), w-z\rangle=-ic_wz_n,
\end{equation*}
with $c_n>0$ as in \eqref{E:2.6}; 
so $\Im \langle\dee\r (w), w-z\rangle = -c_w x_n$. Thus \eqref{E:2.10} is a consequence of the first conclusion of Proposition \ref{P:1}. 

Our next assertion is the analogue of \cite[Lemma 4.3]{LS-4}. For $z\in\bndry D$ we write $z^\delta = z+ \delta\nu_z$, where $\nu_z$ is the inward unit normal at $z$.
\begin{Cor}\label{C:2}
For $\delta>0$ sufficiently small, we have
\begin{equation*}
|\ge (w, z^\delta)|\approx |\ge (w, z)| +\delta,\quad \mbox{for}\ w, z\in\bndry D,\ \mbox{and}\ z^\delta
 \mbox{as above}.
\end{equation*}
\end{Cor}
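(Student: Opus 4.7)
The plan is to apply Proposition \ref{P:1}(i) to $z^\delta$ (which lies in $\bar D$ for $\delta$ small enough) and Proposition \ref{P:1}(ii) to $z \in \bndry D$, then track how each ingredient of the right-hand side of (i) changes when $z$ is replaced by $z^\delta$. Concretely, we have
\begin{equation*}
|\ge(w, z^\delta)| \approx |x_n(z^\delta)| + |w-z^\delta|^2 + |\r(z^\delta)|
\end{equation*}
and
\begin{equation*}
|\ge(w, z)| \approx |x_n(z)| + |w-z|^2
\end{equation*}
(the second follows from (i) with $\r(z)=0$; it is compatible with (ii) because the tangential estimate \eqref{E:2.7} yields $y_n^2\lesssim|w-z|^4$ and $x_n^2\lesssim|x_n|$). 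So the target equivalence reduces to showing that the triple on the right of the first line is comparable to $|x_n(z)|+|w-z|^2+\delta$.

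I would dispatch the three ingredients one at a time. \emph{First}, for $|\r(z^\delta)|$: by Taylor, $\r(z^\delta)=\r(z)+\delta\,\nabla\r(z)\cdot\nu_z+O(\delta^2)=-\delta|\nabla\r(z)|+O(\delta^2)$ because $\r(z)=0$ and $\nu_z=-\nabla\r(z)/|\nabla\r(z)|$; since $|\nabla\r|$ is bounded above and away from zero on $\bndry D$, this gives $|\r(z^\delta)|\approx\delta$ uniformly in $z,w$ for $\delta$ small. \emph{Second}, for $|w-z^\delta|^2$: expanding
\begin{equation*}
|w-z^\delta|^2=|w-z|^2-2\delta\,(w-z)\cdot\nu_z+\delta^2
\end{equation*}
and applying the tangential estimate $|(w-z)\cdot\nu_z|\lesssim|w-z|^2$ (from Taylor-expanding $\r(w)=0$ around $z$, using $\r\in C^2$) produces $|w-z^\delta|^2=|w-z|^2+\delta^2+O(\delta\,|w-z|^2)$, which is $\approx|w-z|^2+\delta^2$ for $\delta$ small. \emph{Third}, for $|x_n(z^\delta)|$: writing $z^\delta-w=(z-w)+\delta\nu_z$ in the orthonormal frame at $w$ and using that the Gauss map is Lipschitz (since $\r\in C^2$) to get $\nu_z=\nu_w+O(|w-z|)=-ie_n+O(|w-z|)$, the correction $\delta\nu_z$ contributes $-i\delta+\delta\cdot O(|w-z|)$ to the $n$-th complex coordinate. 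Since $-i\delta$ is purely imaginary, taking real parts gives $x_n(z^\delta)=x_n(z)+O(\delta|w-z|)$.

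Finally, I would assemble the bounds. The upper bound follows readily from AM-GM ($\delta|w-z|\leq\delta/2+|w-z|^2/2$ for $\delta\leq 1$) and $\delta^2\leq\delta$, yielding
\begin{equation*}
|\ge(w,z^\delta)|\lesssim|x_n(z)|+|w-z|^2+\delta\approx|\ge(w,z)|+\delta.
\end{equation*}
The lower bound splits into two cases: when $C\delta|w-z|\leq|x_n(z)|/2$, the perturbation in step three still leaves $|x_n(z^\delta)|\geq|x_n(z)|/2$, and we are done using $|\r(z^\delta)|\gtrsim\delta$ and $|w-z^\delta|^2\gtrsim|w-z|^2$; when instead $|x_n(z)|\leq 2C\delta|w-z|\lesssim\delta+|w-z|^2$, the $|x_n(z)|$ term is already dominated by $|w-z^\delta|^2+|\r(z^\delta)|\gtrsim|w-z|^2+\delta$.

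The main obstacle I anticipate is keeping the third estimate genuinely uniform: when $|w-z|$ is comparable to the diameter of $D$, the perturbation $\nu_z-\nu_w$ is no longer small, so the error $O(\delta|w-z|)$ is not automatically negligible next to $|x_n(z)|$. The case analysis above is precisely what absorbs this error, since in that regime $|w-z|^2$ is bounded below by a positive constant and dominates.
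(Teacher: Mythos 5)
Your proof is correct and follows essentially the same route as the paper's: apply Proposition \ref{P:1}(i) at $z^\delta$, establish $|\r(z^\delta)|\approx\delta$, $|w-z^\delta|^2\approx|w-z|^2+\delta^2$, and $x_n(z^\delta)=x_n(z)+O(\delta|w-z|)$, and reassemble. The only difference is bookkeeping for the error $O(\delta|w-z|)$: the paper first reduces to $|w-z|\leq c_1$ with $c_1$ small (the far regime being trivial from Proposition \ref{P:1}(i)) so that the error is absorbed into $\delta$, whereas you keep all of $\bndry D$ and split on whether $\delta|w-z|$ is dominated by $|x_n(z)|$; both absorptions are valid.
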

\begin{proof}
It suffices to prove this when $|w-z|\leq c_1$, where $c_1$ is a small positive constant to be chosen below.
The result for $|w-z|\geq c_1$ is a trivial consequence of conclusion (i) in Proposition \ref{P:1}. 

Now let $z_n=x_n + iy_n$ be the $n$-th coordinate of $z$ in the coordinate system centered at $w$ that was chosen earlier, and $z_n^\delta = x_n^\delta + iy_n^\delta$ be the corresponding coordinate of $z^\delta = 
z + \delta\nu_z$. Then, letting $(\cdot, \cdot)$ denote the hermitian inner product in $\Cn$, we have
\begin{equation*}
z_n = (z-w, e_n),\ z_n^\delta = (z^\delta -w, e_n),\ \mbox{and therefore}
\end{equation*}
\begin{equation*}
z_n^\delta = z_n +\delta (\nu_z, e_n) = z_n +\delta (\nu_w, e_n) + O(\delta |z-w|),\ 
\mbox{while}\ (\nu_w, e_n) = i c_w.
\end{equation*}
 Hence 
\begin{equation}\label{E:2.12}
x_n^\delta = x_n + O(\delta|z-w|).
\end{equation}
Also, by Taylor's theorem $\r (z^\delta) = -|\nabla\r (z)| \delta + O(\delta^2)\ \mbox{as}\ \delta\to 0$ since
$\r(z)=0$ and $(\nu_z, \nabla\r (z))= -|\nabla\r (z)|$. Thus
\begin{equation}\label{E:2.13}
|\r (z^\delta)|\approx \delta \quad \mbox{for small}\ \delta >0.
\end{equation}
Finally, we note that $|z^\delta -w|^2 = |z-w|^2 + O(\delta|z-w| +\delta^2)$, and applying this along with
\eqref{E:2.12} and \eqref{E:2.13} via conclusion (i), we see that
\begin{equation*}
|\ge (w, z^\delta)| \approx |x_n^\delta| +|z^\delta -w|^2 +|\r (z^\delta)| 
\approx |x_n| + |z-w|^2 +\delta + O(\delta |z-w|+\delta^2).
\end{equation*}
Now we merely need to take $c_1$ and $\delta$ sufficiently small to absorb the ``$O$'' term above into 
$\delta$, to get
\begin{equation*}
|\ge (w, z)|\approx |x_n| + |w-z|^2 +\delta\,,
\end{equation*}
and thus using conclusion (i) again proves the corollary.
\end{proof}
Note that in view of \eqref{E:2.5} the conclusions of Proposition \ref{P:1} and Corollary \ref{C:2} hold for $\go$ as well as $\ge$.
\subsection{Geometry of the boundary of $D$}\quad 
We define the function $\d (w, z)$ by
\begin{equation*}
\d (w, z) = |\go (w, z)|^{1/2}.
\end{equation*}
Note that by \eqref{E:2.5} we also have
\begin{equation}\label{E:eg-est}
\d (w, z) \approx |\ge (w, z)|^{1/2}\quad\mbox{for all}\ \ \epsilon.
\end{equation}
For $w, z\in\bndry D$, $\d (w, z)$ has the properties of a quasi-distance, namely
\begin{Prop}\label{P:2} 
For $\d(w, z)$ defined as above, we have
\medskip

\begin{itemize}
\item[(a)]\quad $\d(w, z)\geq 0$, and $\d(w, z)=0$ only when $w=z$.
\medskip

\item[(b)]\quad $\d(w, z)\approx \d(z, w)$
\medskip

\item[(c)]\quad $\d(w, z)\lesssim \d(w, \zeta) + \d(\zeta, z)$
\medskip

\end{itemize}
whenever $w, \zeta, z\in\bndry D$.
\end{Prop}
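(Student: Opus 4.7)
My plan is to read off all three claims from Proposition \ref{P:1} together with the refined equivalence \eqref{E:2.10} for $|\ge(w,z)|$ when $w,z\in\bndry D$; by \eqref{E:2.5} the comparisons will be independent of $\epsilon$, so the same statements pass to $\go$ and hence to $\d=|\go|^{1/2}$. For (a), nonnegativity is immediate; for the equality clause I would invoke \eqref{E:2.2} with $z\in\bndry D$ (so $\r(z)=0$), yielding $\Re\go(w,z)\geq c|w-z|^2$. This forces $\d(w,z)^2\geq c|w-z|^2$, and so $\d(w,z)=0$ implies $w=z$.

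For (b), the key point is that because $\r\in C^2$, each partial derivative $\r_{w_j}$ is Lipschitz, so
\[
\langle\dee\r(w),w-z\rangle + \langle\dee\r(z),z-w\rangle = \sum_j\bigl(\r_{w_j}(w)-\r_{w_j}(z)\bigr)(w_j-z_j) = O(|w-z|^2).
\]
Taking imaginary parts, the two quantities $|\Im\langle\dee\r(w),w-z\rangle|$ and $|\Im\langle\dee\r(z),z-w\rangle|$ differ by $O(|w-z|^2)$. This discrepancy is absorbed into the $|w-z|^2$ term of \eqref{E:2.10}, so $|\ge(w,z)|\approx|\ge(z,w)|$, which is property (b).

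For (c) I would decompose
\[
\langle\dee\r(w),w-z\rangle = \langle\dee\r(w),w-\zeta\rangle + \langle\dee\r(w),\zeta-z\rangle,
\]
and in the second summand replace $\dee\r(w)$ by $\dee\r(\zeta)$ at a Lipschitz cost of $O(|w-\zeta|\,|\zeta-z|)$. After taking imaginary parts and using both $|w-z|^2\leq 2(|w-\zeta|^2+|\zeta-z|^2)$ and $2|w-\zeta|\,|\zeta-z|\leq|w-\zeta|^2+|\zeta-z|^2$, the equivalence \eqref{E:2.10} delivers
\[
|\ge(w,z)| \lesssim |\ge(w,\zeta)| + |\ge(\zeta,z)| + |w-\zeta|^2 + |\zeta-z|^2.
\]
The two stray quadratic terms are themselves controlled: for boundary points, \eqref{E:2.2} gives $|w-\zeta|^2\lesssim\Re\go(w,\zeta)\leq|\ge(w,\zeta)|$, and likewise for $|\zeta-z|^2$. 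Taking square roots yields the quasi-triangle inequality.

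The only delicate point is the bookkeeping in (c): the cross term $|w-\zeta|\,|\zeta-z|$ produced by swapping $\dee\r(w)$ with $\dee\r(\zeta)$ must be genuinely absorbable into the natural quantities. It is, precisely because $\r\in C^2$ makes $\dee\r$ Lipschitz (so that the substitution error is quadratic rather than linear in the displacements), and because \eqref{E:2.2} dominates pure quadratic expressions $|\cdot|^2$ by $|\ge|$ on the boundary. I expect no further real obstacles.
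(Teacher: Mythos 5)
Your proposal is correct and follows essentially the same route as the paper: all three properties are reduced to the equivalence \eqref{E:2.10}, with (b) obtained from the $O(|w-z|^2)$ near-antisymmetry of $\Im\langle\dee\r(w),w-z\rangle$ coming from the Lipschitz character of $\dee\r$, and (c) from the same decomposition and substitution $\dee\r(w)\to\dee\r(\zeta)$ with quadratic error. The minor variations (using \eqref{E:2.2} for the equality clause in (a), and again to absorb the stray quadratic terms in (c), which \eqref{E:2.10} already controls) are sound but do not change the argument.
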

\begin{proof}
Conclusion (a) is obvious from \eqref{E:2.10} and the definition of $\d(w, z)$. 
Next, observe that 
\begin{equation*}
|\Im \langle\dee\rho (w), w-z\rangle| = |\Im \langle\dee\rho (z), z-w \rangle| + O(|w-z|^2).
\end{equation*}
Therefore by \eqref{E:2.10}, we have that $|\ge (w, z)|\approx |\ge (z, w)|$, which implies conclusion (b). 
Finally, note that
\begin{equation*}
\langle\dee\r (w), w-z\rangle - \langle\dee\r (w), w-\zeta\rangle
= \langle\dee\r (w), \zeta - z\rangle
\end{equation*}
and the latter equals $\langle\dee\r (\zeta), \zeta - z\rangle + O(|\zeta -z|\, |w-\zeta|)$ since we have that
$\dee\r (w) -\dee\r (\zeta) = O(|w-\zeta|)$.
It follows that $|\Im\langle\dee\r (w),  w-z\rangle|$ is bounded above by
\begin{equation*}
|\Im\langle\dee\r (w),  w-\zeta\rangle| + |\Im\langle\dee\r (\zeta),  \zeta -z\rangle| +
O(|w-\zeta|^2 + |z-\zeta|^2).
\end{equation*}
Also, $|w-z|^2\lesssim |w-\zeta|^2 + |\zeta -z|^2$. From these observations and \eqref{E:2.10} we obtain
\begin{equation*}
|\go(w, z)|\lesssim |\go (w, \zeta)| + |\go (\zeta, z)|,
\end{equation*}
which grants conclusion (c).
\end{proof}
A final, simple observation about the quasi-distance $\d$ is that
\begin{equation}\label{E:2.14}
|w-z|\lesssim \d(w, z)\lesssim |w-z|^{1/2},\quad w, z\in\bndry D,
\end{equation}
which follows immediately from \eqref{E:2.10} via the Cauchy-Schwartz inequality, and the definition of $\d$.
\medskip

At this stage it is worth recording the following facts proved in the same
spirit as the proof of Proposition \ref{P:2}. 

For $A(w, z)$ equal to $\ge (w, z)$ 
we have
\begin{equation}\label{E:starp}
|A (w, z) -A(w', z)|\ \leq\ c_\epsilon\big( \d (w, w')^2 +\d (w, w')\,\d(w, z)\big)\, 
\end{equation}
and for $A(w, z)$ equal to either $\ge (z, w)$ or $\mbox{Im}\langle\dee\rho(w), w-z\rangle$
\begin{equation}\label{E:star}
|A (w, z) -A (w', z)|\ \lesssim\ \d(w, w')^2 + \d(w, w')\,\d(w, z)
\end{equation}
where the implicit constant in the second of these inequalities does not depend on $\epsilon$.
We prove the first inequality; the inequality for $A(w, z) =\ge (z, w)$ or $\mbox{Im}\langle\dee\rho(w), w-z\rangle$ will follow by a similar argument.  We see from \eqref{E:2.3} that 
\begin{equation*}
\ge (w, z) = \langle\dee\rho (w), w-z\rangle + Q_w(w-z), \quad \mbox{if}\quad |w-z|\leq \mu/2
\end{equation*}
where $Q_w(u)$ is a quadratic form in $u$. Thus we may split the difference $\ge (w, z) - \ge (w', z)$ as the sum of two terms: $I+II$, where 
$$
I= \langle\dee\rho (w), w-z\rangle - \langle\dee\rho (w'), w'-z\rangle
$$
and 
$$
II= Q_w(w-z) - Q_{w'}(w'-z).
$$
Now 
$$
I = \langle\dee\rho (w), w-w'\rangle + \langle \dee\rho (w)-\dee\rho (w'), w'-z\rangle =
$$
$$
=\ge(w, w') - Q_w(w-w') + \langle \dee\rho (w)-\dee\rho (w'), w'-z\rangle\,  .
$$
\smallskip

Since $|\ge (w, w')|\approx \d(w, w')^2$, the identity above and \eqref{E:2.14} give
$$
|I|\lesssim \d(w, w')^2 +\d(w, w')\, \d(w, z).
$$
On the other hand
$$
II\ \leq\ \big|Q_w(w-z) -Q_w(w'-z)\big|\ +\  \big|Q_w(w'-z) - Q_{w'}(w'-z)\big|\, ,
$$
with
$$|Q_w(w-z) -Q_w(w'-z)|\lesssim \d(w, w')^2 +\d(w, w')\,\d(w, z)\, ,$$
whereas
\begin{equation*}
|Q_w(w'-z) - Q_{w'}(w'-z)|\
\leq c_\epsilon\,|w-w'|\,|w'-z|^2
\end{equation*}
where 
$$
c_\epsilon = \sup\limits_
{\stackrel{w\in\bndry D}{1\leq j, k\leq n}}
|\nabla \tau^\epsilon_{j, k} (w)|,
$$
and this grants
$$
|Q_w(w'-z) - Q_{w'}(w'-z)|\ \leq\ c_\epsilon \big(\d(w, w')^2 +\d(w, w')\,\d(w, z)\big)\, .
$$
This proves \eqref{E:starp}; the proof of \eqref{E:star} is similar but does not involve the bound $c_\epsilon$.

\medskip

We next introduce the Leray-Levi measure $d\l$ defined on $\bndry D$. The proofs of the assertions in the rest of this section follow closely those given in a broadly parallel situation in \cite[Section 3.4]{LS-4} and so details
will be omitted. The Leray-Levi measure $d\l$ on $\bndry D$ is defined by the linear functional
\begin{equation*}
f\mapsto \int\limits_{\bndry D}\!\! f(w)\,d\l (w) =
\frac{1}{(2\pi i)^n}\int\limits_{\bndry D}\!\! f(w)\, j^*\!\left(\dee\rho\!\wedge\!(\deebar\dee\r)^{n-1}\right)(w)\, ,
\end{equation*}
with $\r$ our defining function, and where $(\deebar\dee\r)^{n-1}$ is the $(n-1)$-fold wedge product of
$\deebar\dee\rho$, and with $j^*$  denoting the pull-back under the inclusion
\begin{equation*}
j: \bndry D \hookrightarrow \Cn\, .
\end{equation*}
Then one has
\begin{equation}\label{E:2.15}
d\l (w) \!=\! (2\pi i)^{-n}\! 
j^*\!\left(\dee\rho\!\wedge\!(\deebar\dee\r)^{n-1}\right)\!\!(w)\!=\!
\Lambda (w) d\sigma (w), 
\end{equation}
where $d\sigma$ is the induced Lebesgue measure,  and $\Lambda (w)$ is a continuous function such that
$$
\ c_1\leq \Lambda(w)\leq c_2\, ,\quad w\in \bndry D\, ,
$$
 with $c_1$ and $c_2$ two positive constants. In fact
\begin{equation*}
\Lambda (w) = (n-1)!(4\pi)^{-n}|\det \r (w)|\ |\nabla\r (w)|,\quad w\in\bndry D,
\end{equation*}
where $\det \r (w)$ is the determinant of the $(n-1)\times (n-1)$ matrix
\begin{equation*}
\left\{\frac{\dee^2\r}{\dee z_j\deebar z_k}\right\}\!\bigg|_{z=w}\, ,\quad 1\leq j, k\leq n-1,
\end{equation*}
computed in the coordinate system $(z_1, \ldots, z_n)$ centered at $w$ that was introduced above.
(See \cite[Lemma VII.3.9]{Ra}.) The $C^2$ character of $\r$ together with its strict plurisubharmonicity then 
establishes \eqref{E:2.15}. The particular relevance of the Leray-Levi measure will become apparent
when we consider adjoints of our operators.

Our next assertions concern the boundary balls $\{\B_r (w)\}$ determined via the quasi-distance $\d$ and their measures. We define
\begin{equation*}
\B_r(w) = \{z\in\bndry D\ :\ \d(w, z)<r\},\quad \mbox{where}\ w\in\bndry D.
\end{equation*}
We also consider the ``box'' 
\begin{equation*}
\Bt_r(w) =\{z\in\bndry D \ :\ |x_n|<r^2,\ |z'|<r\},\quad w\in\bndry D.
\end{equation*}
We then have the equivalence $\B_r(w) \approx \Bt_r(w)$, which means
\begin{equation*}
\Bt_{c_r}(w)\subset \B_r(w)\subset\Bt_{c_2r}(w),
\end{equation*}
for two positive constants $c_1$ and $c_2$ that are independent of $r>0$ and $w\in\bndry D$. These inclusions 
follow directly from conclusion (ii) in Proposition \ref{P:1} and the fact that
\begin{equation*}
|\go (w, z)|\approx |\ge (w, z)|\approx \d(w, z)^2\, .
\end{equation*}

Using the arguments set down in \cite[Section 3.5]{LS-4}, one can show that the equivalence of $\B_r(w)$ with $\Bt_r(w)$ implies that
\begin{equation*}
\l (\B_r(w))\approx r^{2n},\quad 0\leq r\leq 1,
\end{equation*}
and hence also $\sigma (\B_r(w))\approx r^{2n},\quad 0\leq r\leq 1$. 

As a final consequence (as is shown in \cite{LS-4}) we have 
\begin{equation}\label{E:2.16}
\int\limits_{w\in \B_r(z)}\!\!\!\!\!\!\!\! \d(w, z)^{-2n +\beta} d\l(w) \leq c_\beta\, r^\beta; 
\!\!\!\!\!
\int\limits_{w\notin \B_r(z)}\!\!\!\!\!\!\!\!\! \d(w, z)^{-2n -\beta} d\l(w) \leq c_\beta\, r^{-\beta}
 \end{equation}
for $0<r<1$ and $\beta >0$. When $\beta =0$ we can assert that
\begin{equation}\label{E:2.16a}
\int\limits_{w\notin \B_r(z)}\!\!\!\!\! \d(w, z)^{-2n}\, d\l (w)\ \leq\ c\, \log(1/r),\quad \mbox{for}\ \ 0< r<1/2.
\end{equation}
\section{A family of Cauchy integrals: definition, correction and initial properties}\label{S:3}
Here we define the Cauchy integrals $\{\Cine\}_\epsilon$ (determined by the the 
denominators $\{\ge\}_\epsilon$) and study their
properties when $\epsilon$ is kept fixed; for convenience of notation we will henceforth drop explicit reference
 to $\epsilon$ and will resume doing so in Part II, when the dependence on $\epsilon$ will again be relevant
  (in fact crucial).
 Thus, for the time being we will write $\g$ for $\ge$; $\Cin$ for $\Cine$, and so forth.
\subsection{A Cauchy-Fantappi\'e integral}\label{SS:initial}\quad 
Our Cauchy integral will be defined as the sum of two operators. The first, $\CinI$, is a Cauchy-Fantappi\'e integral. To describe it we first isolate a 1-form $G$ closely related to the denominator $\g$. 

We set
\begin{equation}\label{E:3.1}
G (w, z) = 
\end{equation}
\begin{equation*}
=\chi\bigg[\dee\r (w) -\frac12\sum\limits_{j, k} \tau^\epsilon_{j, k}(w)\, (w_j-z_j) \, dw_k\bigg] +
(1-\chi)\sum\limits_k(\bar w_k - \bar z_k)\, dw_k\, .
\end{equation*}
As a result,
\begin{equation*}
\g (w, z) = \langle G (w, z), w-z\rangle.
\end{equation*}
We next normalize $G$ and set
\begin{equation*}
\G (w, z) = \frac{G(w, z)}{\g (w, z)},\quad \mbox{for}\ w\in\bndry D, \ z\in D.
\end{equation*}
Then $\G$ is a ``generating form'', namely $\langle\G(w, z), w-z\rangle = 1$ for any $z\in D$ and for any $w$ 
in a neighborhood of $\bndry D$, see \cite[Lemma 6, Section 7]{LS-3}. Note that $\G$ is a form of type $(1, 0)$ in $w$, with coefficients that are $C^1$ in $w$ and $C^\infty$ in $z$. 

The Cauchy-Fantappi\'e integral $\CinI$ is defined as
\begin{equation*}
\CinI (f) (z) = \frac{1}{(2\pi i)^n}\!\!
\int\limits_{w\in\bndry D}\!\!\!\!
f(w)\, j^*(\G \!\wedge\! (\deebar \G)^{n-1})(w, z),\quad z\in D.
\end{equation*}
Here $f$ is an integrable function on $\bndry D$ and, as before, $j: \bndry D \hookrightarrow\C^n$.

Note that
\begin{equation*}
\deebar\G = -\frac{1}{\g^2}(\deebar\g)\wedge G + \frac{1}{\g}\, \deebar G=
-\frac{1}{\g} (\deebar \g)\wedge \G + \frac{1}{\g} \, \deebar G,
\end{equation*}
and since $\G\wedge\G =0$, it follows that
\begin{equation}\label{E:3.2}
\CinI (f) (z) = \int\limits_{w\in\bndry D}\!\!\! C^1(w, z) f(w),\quad z\in D,
\end{equation}
where
\begin{equation*}
C^1(w, z)=
 \frac{1}{(2\pi i)^n}\,j^*\!\left(\frac{G\!\wedge\! (\deebar G)^{n-1})(w, z)}{\g (w, z)^n}\right).
 \end{equation*}
\begin{Prop}\label{P:3}
Suppose that $F$ is continuous in $\bar D$ and holomorphic in $D$. Let 
\begin{equation*}
f= F\bigg|_{\bndry D}\, .
\end{equation*}
Then
\begin{equation*}
\CinI (f) (z) = F(z),\quad z\in D.
\end{equation*}
\end{Prop}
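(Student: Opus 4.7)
My plan is to invoke the standard Cauchy--Fantappi\`e reproduction principle: by construction $\G=G/\g$ is a generating form, $\langle\G(w,z),w-z\rangle=1$, and any Cauchy--Fantappi\`e integral built from a generating form reproduces functions holomorphic in $D$ and continuous on $\bar D$. Concretely, for $z\in D$ fixed, I would (i) show that the $(n,n-1)$-form $K(w)=F(w)\,j^*(\G\wedge(\deebar_w\G)^{n-1})$ is $d$-closed on $\bar D\setminus\{z\}$; (ii) apply Stokes' theorem on $D\setminus B_\delta(z)$ for small $\delta>0$ to obtain $\int_{\bndry D}K=\int_{\bndry B_\delta(z)}K$; and (iii) take $\delta\to 0$ to identify the right-hand side as $(2\pi i)^n F(z)$.

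For (i): since $K$ has bidegree $(n,n-1)$ in $w$, $\dee_w K$ vanishes for dimensional reasons, so $dK=\deebar_w K$. Using $\deebar_w F=0$ together with $\deebar_w^2=0$, this reduces to $F\cdot(\deebar\G)^n$. The crucial algebraic fact is that $(\deebar\G)^n\equiv 0$: differentiating the generating form identity $\sum_j\G_j(w_j-z_j)=1$ in $\bar w_k$ yields $\sum_j(\dee\G_j/\dee\bar w_k)(w_j-z_j)=0$ for each $k$, so the matrix of coefficients of $\deebar\G$ is singular, which forces its $n$-th wedge power (proportional to the determinant times the full volume form) to vanish. For (iii), I would compare with the classical Bochner--Martinelli kernel: the linear path $\G_t=t\G+(1-t)\G_{BM}$, with $\G_{BM}(w,z)=\sum_j\overline{(w_j-z_j)}|w-z|^{-2}dw_j$, is itself generating for every $t$, and a standard homotopy identity (Koppelman--Leray) expresses the difference of the two Cauchy--Fantappi\`e kernels as $\deebar_w\Theta$ for an explicit $(n,n-2)$-form $\Theta$. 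Integrating against $F$ and using $\deebar_w F=0$ together with Stokes on $\bndry B_\delta(z)$ identifies the residue of $K$ at $z$ with the Bochner--Martinelli residue, which the classical formula evaluates to $(2\pi i)^n F(z)$.

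The main obstacle is the limited regularity of $\G$: since $\tau^\epsilon_{jk}$ is only $C^1$ in $w$, the form $\deebar_w\G$ is merely continuous and the identity $\deebar_w^2\G=0$ used in step (i) is not available pointwise. I would handle this by approximating $\{\tau^\epsilon_{jk}\}$ by $C^\infty$ matrices $\{\tau^{\epsilon,\nu}_{jk}\}$ converging uniformly in $w$ with uniformly bounded $C^1$ norms, running the reproduction argument for the smoothed $\G^\nu$, and passing to the limit $\nu\to\infty$. The lower bound $|\ge(w,z)|\gtrsim|\r(z)|+|w-z|^2$ from Proposition \ref{P:1} is stable under such perturbations and provides the uniform control on denominators needed to justify dominated convergence on both $\bndry D$ and $\bndry B_\delta(z)$. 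The hypothesis that $F$ is merely continuous on $\bar D$ (rather than $C^1$) is reduced by standard interior smoothing: approximate $F$ by $F_\eta(\zeta)=F((1-\eta)\zeta+\eta z_0)$ for a fixed interior point $z_0$, establish the identity for each $F_\eta\in\mathcal{O}(\bar D_\eta)$ on a slight enlargement, and pass to the limit $\eta\to 0$ using the continuity of $C^1(w,z)$ in $w\in\bndry D$ for fixed $z\in D$.
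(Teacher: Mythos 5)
The paper does not actually prove this proposition in situ: it is disposed of by citation to \cite{LS-3} (Proposition~4 of Section~7 there, together with Lemma~6 of that section), and your argument is essentially the standard Cauchy--Fantappi\`e reproduction proof that underlies that citation --- closedness of the kernel off the diagonal via $(\deebar\G)^n=0$ (your rank argument from $\sum_j(\dee\G_j/\dee\bar w_k)(w_j-z_j)=0$ is the right one), Stokes on $D\setminus B_\delta(z)$, and the Koppelman--Leray homotopy to the Bochner--Martinelli kernel to evaluate the residue. Your regularization of the coefficients to justify the exterior calculus is also in the right spirit (it parallels the paper's own treatment of Lemma \ref{L:9}); note only that you must regularize $\dee\r(w)$ as well as $\{\tau^\epsilon_{jk}\}$, since $\r$ is merely $C^2$ and $\dee\r$ enters $G$ with only $C^1$ coefficients. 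The uniform lower bound on $|\ge(w,z)|$ for fixed $z\in D$ indeed makes all these limits harmless.

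The one step that fails as written is your reduction from $F$ continuous on $\bar D$ to $F$ holomorphic past the boundary. The dilation $F_\eta(\zeta)=F((1-\eta)\zeta+\eta z_0)$ is only defined on $\bar D$ if $(1-\eta)\bar D+\eta z_0\subset D$, i.e.\ if $D$ is star-shaped with respect to $z_0$; a bounded strongly pseudoconvex $C^2$ domain need not be star-shaped with respect to any point, so this approximation is unavailable in general. The standard repair is to exhaust $D$ by the sublevel sets $D_\delta=\{\r<-\delta\}$: the variant estimate $\Re\ge(w,z)\geq c'(\r(w)-\r(z)+|w-z|^2)$, valid for $w,z$ in a neighborhood of $\bndry D$, shows that $\ge(w,z)\neq 0$ for $w\in\bndry D_\delta$ and $z\in D_\delta$, so $\G$ is still a generating form for $D_\delta$ and your Stokes/homotopy argument applies there, where $F$ is $C^\infty$ up to the boundary. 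Letting $\delta\to 0$, the uniform continuity of $F$ on $\bar D$ and the continuity of the kernel for $z$ fixed at positive distance from $\bndry D$ give convergence of the boundary integrals and hence the identity on $\bndry D$ itself. With that substitution your proof is complete and consistent with the argument the paper imports from \cite{LS-3}.
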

\begin{proof}
This proposition is a restatement of \cite[Proposition 4, Section 7]{LS-3} (see also 
\cite[Proposition 2, Section 5]{LS-3} and \cite[Lemma 6, Section 7]{LS-3}).
\end{proof}

\subsection {Correction Operator}\label{SS:correction}\quad
While the Cauchy-Fantappi\'e integral $\CinI$ reproduces holomorphic functions, $\CinI (f)$ is not holomorphic for general $f$. To achieve this we correct $\CinI$ by the solution of a $\deebar$-problem.
Here we use the presentation of this idea as it appears in \cite[Section 8]{LS-3}, where further details can be found; earlier versions are in \cite{KS-2} and \cite{Ra}.

There is a $C^\infty$-smooth, strongly pseudo-convex domain $\Omega$ that contains $\bar D$ with the property that  
 \begin{equation*}
 H(w, z) :=
\left\{\begin{array}{rcl}
-\deebar_z\big(\G\!\wedge\!(\deebar_w\G)^{n-1}\big),&\mbox{for}&
z\in\Omega\setminus\{|z-w|<\mu/2\}\\
\\
0,& \mbox{for}& |z-w|<\mu/2
\end{array}
\right.
\end{equation*}
is smooth in $z\in\Omega$ and continuous in $w\in \bndry D$, and satisfies the compatibility condition
 $\deebar_z H(w, z) =0$ whenever $z\in \Omega$ and $w\in\bndry D$. 
 So if we consider the solution of the $\deebar$-problem 
 \begin{equation*}
 \deebar_z\CkerII (w, z) = H(w, z),\quad z\in \Omega\, ,
 \end{equation*}
we can write $\CkerII (w, z) = \mathscr S_z (H(w, \cdot))$ for the corresponding normal solution operator $\mathscr S_z$, as given in e.g., 
\cite{CS}, \cite{FK}. Then $\CkerII (w, z)$ is an $(n, n-1)$-form in $w$, whose coefficients are of class $C^1$ in $w$ and depend smoothly on $z$. In particular, $\CkerII (w, z)$ is bounded on 
$\bndry D\times \bar D$, and so
\begin{equation}\label{E:C-two-bdd}
\sup\limits_{(w, z)\in \bndry D\times \bar D}|\CkerII (w, z)| \lesssim 1\, .
\end{equation}
We then define
\begin{equation}\label{E:3.3a}
\CinII (f) (z) =\int\limits_{\bndry D}\!\!\CkerII (w, z) f(w)\, 
\end{equation}
and write
\begin{equation*}
\Cin (f) (z) =\int\limits_{\bndry D}\!\!\Cker (w, z) f(w)\quad
\end{equation*}
where
 \begin{equation*}
 \Cker (w, z)= \CkerI(w, z) +\CkerII (w, z),\quad \mbox{and so}\quad  \Cin = \CinI + \CinII\, .
\end{equation*}
One has  as a result
\begin{Prop}\label{P:4}\quad
\begin{itemize}
\item[(1)]\quad Whenever $f$ is integrable, $\Cin (f) (z)$ is holomorphic for $z\in D$.
\item[]
\item[(2)]\quad  If $F$ is continuous in $\bar D$ and holomorphic in $D$ and 
\begin{equation*}
f=F\bigg|_{\bndry D}\, ,
\end{equation*}
\quad then\ \  $\Cin (f) (z) = F(z), \ z\in D.$
\end{itemize}
\end{Prop}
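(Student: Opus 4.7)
The plan is to prove the two assertions sequentially. For part (1), I would differentiate $\Cin(f)$ under the integral in $z$ and verify that the contributions $\deebar_z\CkerI(w,z)$ and $\deebar_z\CkerII(w,z)$ cancel identically, by the very construction of $\CkerII$. To set up the differentiation, fix $z_0\in D$ and a compact neighborhood $U$ of $z_0$ with $\bar U\subset D$. Since $U$ is bounded away from $\bndry D$, the lower bound \eqref{E:2.4} combined with $-\r(z)\geq c_U>0$ on $U$ gives $|\ge(w,z)|\geq c_U'$ uniformly on $\bndry D\times U$, so $\CkerI(w,z)$ and all its $z$-derivatives are uniformly bounded in $w\in\bndry D$; the analogous bound for $\CkerII$ follows from \eqref{E:C-two-bdd} and smoothness of the Kohn solution in the $z$-parameter. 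For integrable $f$, differentiation under the integral is thus justified.

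The computation of $\deebar_z\CkerI(w,z)$ splits according to whether $\chi(w,z)=1$ or not. For $|z-w|<\mu/2$, $\chi\equiv 1$ and the surviving expression
\begin{equation*}
G(w,z)=\dee\r(w) -\tfrac12\sum_{j,k}\tau^\epsilon_{j,k}(w)(w_j-z_j)\,dw_k
\end{equation*}
has coefficients that are holomorphic in $z$; consequently so do $\ge(w,z)=\langle G(w,z),w-z\rangle$, the generating form $\G=G/\ge$, and $\deebar_w\G$, whence $\deebar_z\bigl(\G\wedge(\deebar_w\G)^{n-1}\bigr)=0$, matching $H(w,z)=0$ in this region. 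For $|z-w|\geq\mu/2$ the definition of $H$ yields the same relation $\deebar_z\bigl(\G\wedge(\deebar_w\G)^{n-1}\bigr)=-H(w,z)$ directly. Up to the common normalization, this is exactly cancelled by the defining equation $\deebar_z\CkerII(w,z)=H(w,z)$, so $\deebar_z\Cin(f)(z)=0$, proving (1).

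For part (2), Proposition \ref{P:3} already supplies $\CinI(f)\equiv F$ on $D$, so the task reduces to showing $\CinII(f)\equiv 0$ whenever $f$ is the boundary value of a holomorphic $F$. Since $F$ is holomorphic, $\deebar_z\CinI(f)=\deebar_z F=0$; by the computation of the previous paragraph this translates into
\begin{equation*}
\int_{\bndry D} H(w,\cdot)(z)\,f(w)\ =\ 0 \qquad (z\in D)
\end{equation*}
as a $(0,1)$-form in $z$. Since $\CkerII(w,z)=\Sz\bigl(H(w,\cdot)\bigr)(z)$ and the normal solution operator $\Sz$ is linear and commutes with the boundary integration in the parameter $w$, this gives
\begin{equation*}
\CinII(f)(z)\ =\ \Sz\!\left(\int_{\bndry D} H(w,\cdot)\,f(w)\right)\!(z)\ =\ \Sz(0)\ =\ 0,
\end{equation*}
so $\Cin(f)=F$ on $D$.

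The step I expect to be most delicate is the swap of $\Sz$ with the parameter integration over $w$ in the last display. This is a Fubini-type argument that rests on linearity of the Kohn normal solution operator together with continuity of $w\mapsto H(w,\cdot)$ in the ambient norm used by $\Sz$ and the uniform bounds on $H$ built into the construction; all these ingredients are standard for the Kohn solution on strongly pseudo-convex domains (cf.\ \cite{CS}, \cite{FK}).
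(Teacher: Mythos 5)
Your part (1) is correct and is exactly the argument that the construction of Section \ref{SS:correction} is designed to make work (the paper itself states Proposition \ref{P:4} without proof, deferring to \cite{LS-3}): the uniform lower bound on $|\ge(w,z)|$ for $z$ in compact subsets of $D$ justifies differentiating under the integral, and $\deebar_z\CkerI(w,z)$ is cancelled by $\deebar_z\CkerII(w,z)=H(w,z)$ by construction.

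Part (2), however, has a genuine gap at the last step. You correctly deduce that the $(0,1)$-form $g(z):=\int_{\bndry D}H(w,z)f(w)$ vanishes for $z\in D$, but you then conclude $\CinII(f)=\Sz(g)=\Sz(0)=0$. The operator $\Sz$ is the normal solution operator on the larger domain $\Omega\supset\bar D$, and $g$ is a form on all of $\Omega$; you have shown $g=0$ only on $D$. The canonical solution is nonlocal: if $g$ vanishes on $D$ but not on $\Omega\setminus\bar D$, then $\Sz(g)$ is holomorphic on $D$ but by no means zero there. Moreover your vanishing argument cannot be extended to $z\in\Omega\setminus\bar D$: the identity $\deebar_z\CinI(f)=-c_n\,g$ rests on the reproducing property of Proposition \ref{P:3}, which is available only for $z\in D$, and for $z$ outside $\bar D$ the denominator $\ge(w,z)$ may vanish for $w$ near $z$, so $\CinI(f)(z)$ is not even defined there. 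The standard route to $\CinII(f)=0$ works in the $w$-variable instead: one checks that $\CkerII(w,z)$ is $\deebar_w$-closed, because
$\deebar_wH=-\deebar_w\deebar_z\bigl(\G\wedge(\deebar_w\G)^{n-1}\bigr)=\deebar_z\bigl((\deebar_w\G)^{n}\bigr)=0$
(the top power of $\deebar_w\G$ vanishes for a generating form) and $\Sz$ commutes with $\deebar_w$; one then applies Stokes' theorem in $w$ over $D$, using $\deebar_wF=0$, first for $F$ holomorphic in a neighborhood of $\bar D$ and then by approximation from inside for $F$ merely continuous on $\bar D$. This is the argument of \cite{LS-3} (see also \cite{KS-2}, \cite{Ra}) to which the paper defers.
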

It is also useful to have the additional regularity of $\Cin (f)$ when $f$ is H\"older in
the sense of the quasi-distance $\d$, i.e. it satisfies
\begin{equation}\label{E:3.4}
|f(w_1)-f(w_2)|\lesssim \d(w_1, w_2)^\alpha\quad \mbox{for some}\ 0<\alpha\leq 1.
\end{equation}
\begin{Prop}\label{P:5}
If $f$ satisfies the H\"older-type condition \eqref{E:3.4} then $\Cin (f)$ extends to a continuous function
on $\bar D$.
\end{Prop}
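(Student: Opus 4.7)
I decompose $\Cin = \CinI + \CinII$ and treat each piece separately. For $\CinII$, the kernel $\CkerII(w, z)$ is uniformly bounded on $\bndry D \times \bar D$ by \eqref{E:C-two-bdd} and, being the normal solution $\Sz$ of a $\deebar$-problem with smoothly varying source, depends $C^\infty$-smoothly on $z$ throughout $\Omega \supset \bar D$. Dominated convergence therefore extends $\CinII(f)$ to a continuous function on $\bar D$ for any integrable $f$, in particular for our H\"older $f$.

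For $\CinI$, fix $z_0 \in \bndry D$ and apply Proposition \ref{P:3} to the constant $F \equiv 1$ to obtain $\CinI(1)(z) = 1$ for every $z \in D$. Subtracting $f(z_0)$ times this identity yields the key representation
\begin{equation*}
\CinI(f)(z) - f(z_0) \;=\; \int_{\bndry D}\bigl(f(w) - f(z_0)\bigr)\, C^1(w, z), \qquad z \in D.
\end{equation*}
The pointwise bound $|C^1(w, z)|\, d\sigma(w) \lesssim |\g(w, z)|^{-n}\, d\lambda(w)$ (from the uniform boundedness of $G$ and $\deebar G$ at our fixed $\epsilon$, together with \eqref{E:eg-est} and the continuity of the Leray--Levi density) combined with \eqref{E:3.4} dominates the integrand by $\d(w, z_0)^\alpha\, |\g(w, z)|^{-n}\, d\lambda(w)$. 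I then split $\bndry D$ into a far region $\{w \st \d(w, z_0) \geq K\, |\g(z_0, z)|^{1/2}\}$ and its near complement. On the far region, the quasi-triangle estimate coming from Proposition \ref{P:1}(i) and Proposition \ref{P:2}(c) (extended off $\bndry D$ via the definition $|\g(w,z)|^{1/2}$) gives $|\g(w, z)| \gtrsim |\g(w, z_0)|$, so the integrand is dominated by $\d(w, z_0)^{\alpha - 2n}$, which is integrable by \eqref{E:2.16} with $\beta = \alpha > 0$; dominated convergence then delivers convergence of this part. On the near region --- a boundary set of $\lambda$-measure $\lesssim |\g(z_0, z)|^n$ --- one combines the lower bound $|\g(w, z)| \gtrsim |\rho(z)| + |w-z|^2$ from Proposition \ref{P:1}(i) with the H\"older gain $\d(w, z_0)^\alpha$ and the volume estimate $\lambda(\B_r(z_0)) \approx r^{2n}$ to show the contribution vanishes as $z \to z_0$.

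Combining the two estimates yields $\lim_{z \to z_0,\, z\in D} \CinI(f)(z) = \Phi(z_0)$, where
\begin{equation*}
\Phi(z_0) \;:=\; f(z_0) + \int_{\bndry D}\bigl(f(w) - f(z_0)\bigr)\, C^1(w, z_0)
\end{equation*}
with the integral absolutely convergent by the same bound. A parallel dominated-convergence argument, using the continuity of $f$ together with a fixed integrable majorant $\d(w, z_*)^{\alpha - 2n}$ for a reference $z_* \in \bndry D$ near $z_0$, shows that $\Phi$ is continuous on $\bndry D$; glued with the interior continuity of $\CinI(f)$ this produces the desired extension to $\bar D$. The main technical obstacle I foresee is the near-region estimate when $z$ approaches $z_0$ along a complex-tangential direction, where $|\rho(z)|$ can be as small as $|z - z_0|^2$ while the controlling boundary ball has radius $|\g(z_0,z)|^{1/2}$; it is precisely the H\"older gain together with the sharp ball-volume estimate \eqref{E:2.16} that is needed to absorb the singularity.
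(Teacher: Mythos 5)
Your treatment of $\CinII$ and of the far region is sound, but the near-region estimate has a genuine gap, and it sits exactly at the point you flag as the main obstacle. Write $\hat z\in\bndry D$ for the normal projection of $z$ and $\delta=\dist(z,\bndry D)$, so that by Corollary \ref{C:2} one has $|\g(w,z)|\approx \d(w,\hat z)^2+\delta$ and $r^2:=|\g(z_0,z)|\approx \d(z_0,\hat z)^2+\delta$. Your proposed majorant for the near-region contribution is then comparable to
\begin{equation*}
\int\limits_{\d(w,z_0)<Kr}\!\!\!\!\!\d(w,z_0)^\alpha\big(\d(w,\hat z)^2+\delta\big)^{-n}\,d\l(w)\ \approx\ r^\alpha\ +\ \d(z_0,\hat z)^\alpha\,\log\big(r^2/\delta\big),
\end{equation*}
as a decomposition into shells $\d(w,\hat z)\approx 2^{-j}$ shows: the H\"older gain is centered at $z_0$, while the singularity of the kernel sits at $\hat z$, so on the shells with $\delta^{1/2}\lesssim 2^{-j}\lesssim \d(z_0,\hat z)$ the gain contributes only the constant $\d(z_0,\hat z)^\alpha$ and the kernel contributes the logarithm of \eqref{E:2.16a}. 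Since continuity on $\bar D$ requires unrestricted approach, no relation between $\delta$ and $\d(z_0,\hat z)$ may be assumed, and $\d(z_0,\hat z)^\alpha\log(r^2/\delta)$ need not tend to zero (take $\d(z_0,\hat z)=1/k$, $\delta=e^{-k^2}$). So the absolute-value bound you invoke is insufficient; the fix is to exploit cancellation, e.g.\ by splitting $f(w)-f(z_0)=(f(w)-f(\hat z))+(f(\hat z)-f(z_0))$, handling the first term with the gain now centered at $\hat z$ (which yields $O(r^\alpha)$), and using $\CinI(1)=1$ a second time to write $\int_{\mathrm{near}}\CkerI(w,z)=1-\int_{\mathrm{far}}\CkerI(w,z)=O(\log(1/r))$, which is then absorbed by $\d(\hat z,z_0)^\alpha\lesssim r^\alpha$.

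The paper organizes the argument differently so that this issue never arises: it sets $F^\delta(z)=\CinI(f)(z^\delta)$ for $z\in\bndry D$ and proves the uniform Cauchy estimate $\sup_{z}|F^{\delta_1}(z)-F^{\delta_2}(z)|\lesssim\max\{\delta_1,\delta_2\}^{\alpha/2}$, inserting $f(w)-f(z)$ via $\CinI(1)=1$ and using the kernel difference bound \eqref{E:3.7} for two points on the same normal ray; there the H\"older gain is automatically centered at the foot $z$ of the ray, which is precisely the re-centering your argument is missing. Uniform convergence of the $F^\delta$ then gives continuity on $\bar D$ for unrestricted approach without ever estimating a moving near region. (A smaller point: your ``fixed integrable majorant'' for the continuity of $\Phi$ on $\bndry D$ has a moving singularity as $z_0'$ varies; this is harmless, but it is handled by the uniform smallness of $\int_{\B_\eta(z_0)}\d(w,z_0')^{\alpha-2n}d\l$ from \eqref{E:2.16} rather than by dominated convergence with a single majorant.)
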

\begin{proof}
Since $\Cin = \CinI + \CinII$ and the kernel of $\CinII$ is continuous in $\bar D$, $\CinII (f)$ is automatically
continuous there, and we are reduced to considering $\CinI (f)$.

Given the smoothness of $\G (w, z)$ and $\deebar_w\g(w, z)$ for $z\in D$, it suffices to prove the 
continuity of $\CinI (f) (z)$ for $z$ in $\bar D$ and 
close to
  $\bndry D$. To do this we set 
\begin{equation*}
F^\delta (z) = \CinI (f)(z^\delta), \quad \mbox{with}\ z^\delta = z+\delta \nu_z
\end{equation*}
with $\nu_z$ and $\delta$ as in Corollary \ref{C:2}. It will suffice to see that the functions $F^\delta $ 
converge uniformly on $\bndry D$ as $\delta\to 0$. In fact one can assert that
\begin{equation}\label{E:3.5}
\sup\limits_{z\in\bndry D} |F^{\delta_1}(z) - F^{\delta_2}(z)|\lesssim \max\{\delta_1, \delta_2\}^{\alpha/2}.
\end{equation}
This can be proved as follows. With $\CkerI (w, z)$ the kernel of the operator $\CinI$ we have
\begin{equation}\label{E:3.6}
F^{\delta_1}(z) - F^{\delta_2}(z) =
\int\limits_{w\in\bndry D}\!\!\!
\bigg(\CkerI (w, z^{\delta_1}) -\CkerI (w, z^{\delta_2})\bigg)(f(w) - f(z))
\end{equation}
because $\CinI (1) =1$ by Proposition \ref{P:3}.

Now suppose $\delta_1\leq \delta_2$, then one has the following estimate
\begin{equation}\label{E:3.7}
\left|\CkerI (w, z^{\delta_1}) -\CkerI (w, z^{\delta_2})\right| \lesssim
\min\left\{\frac{1}{\d(w, z)^{2n}}, \  \frac{\delta_2}{\d(w, z)^{2n+2}}\right\}\, .
\end{equation}
Indeed, $\CkerI (w, z)=j^*\left((2\pi i\, \g (w, z))^{-2n} G\wedge (\deebar_w G)^{n-1}\right)$. Looking back at the definition
of $G(w, z)$, see \eqref{E:3.1}, we see that $G$ and $\deebar_w G$ are bounded. So the inequality:
$\left|\CkerI (w, z^{\delta_1}) -\CkerI (w, z^{\delta_2})\right| \lesssim \d(w, z)^{-2n}$ follows from Corollary
 \ref{C:2}, if we take into account that $|\g(w, z)|\approx \d(w, z)^2$. 
 
 Since $\nabla_{\!z} G$ and $\nabla_{\! z}\deebar_w G$ are also bounded, Corollary \ref{C:2} again grants
 $$\left|\CkerI (w, z^{\delta_1}) -\CkerI (w, z^{\delta_2})\right| \lesssim |\delta_1-\delta_2|\,|\g(w, z)|^{-n-1}\lesssim
 \delta_2\,\d(w, z)^{-2n-2},$$ since $\delta_2>\delta_1$. Hence the estimate \eqref{E:3.7} is established.
 
 We now break the integration in \eqref{E:3.6} into two parts: where $\d(w, z)\leq \delta_2^{1/2}$, and
  $\d(w, z)>\delta_2^{1/2}$. Then since $|f(w_1)-f(w_2)|\lesssim\d(w, z)^\alpha$, the integral over the first part
  is bounded by a multiple of 
  $$
  \int\limits_{\d(w, z)\leq \delta_2^{1/2}}\!\!\!\!\!\!\!\!\!\!\d(w, z)^{-2n+\alpha}\,d\l(w)\, .
  $$
  Similarly the integral over the second part is bounded by
   $$
 \delta_2\!\! \!\!\!\!\!\!\!\!\int\limits_{\d(w, z)> \delta_2^{1/2}}\!\!\!\!\!\!\!\!\!\!\d(w, z)^{-2n-2 +\alpha}\,d\l(w)\, .
  $$
Both integrals are $O(\delta_2^{\alpha/2})$ in view of \eqref{E:2.16} with $r=\delta_2^{1/2}$.
Since we took $\delta_2\geq \delta_1$, this proves \eqref{E:3.5}, and the proposition is established.
\end{proof}
\section{A Cauchy transform; $L^p$-boundedness}\label{S:4}
Proposition \ref{P:5} allows us to define the  ``Cauchy transform'' $\Ctr$. It is a linear operator, initially defined on functions satisfying \eqref{E:3.4} by
\begin{equation}\label{E:4.1}
\Ctr (f) = \Cin (f)\big|_{\bndry D}
\end{equation}
(We recall that $\Cin$ and thus $\Ctr$ depend on a parameter $\epsilon$, which here is kept fixed and hence omitted from the notations.) 
\medskip

It is worthwhile to point out the following formula for $\Ctr (f)$
\begin{equation}\label{E:Ctr-rep}
\Ctr (f)(z) = f(z)\ + \int\limits_{w\in\bd D}\!\!\!
C(w, z) \,[f(w)-f(z)]\, ,\quad z\in\bd D\, ,
\end{equation}
which is proved by considering the identity
\begin{equation*}
\Cin (f)(z^\delta) = f(z)\ + \!\int\limits_{w\in\bd D}\!\!\!\!
C(w, z^\delta) \,[f(w)-f(z)]
\end{equation*}
where as before, $z^\delta = z+\delta \nu_z$, and letting $\delta\to 0$. (We remark
that the integral in the expression above is absolutely convergent in view of the fact 
that $f$ satisfies the H\"older-like condition \eqref{E:3.4}.)
\medskip

Our principal result for $\Ctr$ is as follows.
\begin{Thm}\label{T:1}
The operator $\Ctr$ initially defined for functions satisfying \eqref{E:3.4} extends to a bounded linear transformation on $L^p(\bndry D, d\l)$, for $1<p<\infty$.
\end{Thm}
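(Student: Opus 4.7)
The plan is to apply the $T(1)$ theorem on the space of homogeneous type $(\bndry D, \d, d\l)$ developed in Section~\ref{S:2}, after decomposing the kernel of $\Cin$ into a main singular piece and controllable remainders. Writing $\Cin = \CinI + \CinII$ gives $\Ctr = \Ctr^1 + \Ctr^2$, where $\Ctr^i(f) = \Cin^i(f)|_{\bndry D}$. By \eqref{E:C-two-bdd} the kernel $\CkerII$ is uniformly bounded on $\bndry D \times \bar D$, so $\Ctr^2$ is bounded on every $L^p(\bndry D, d\l)$ trivially, and the problem reduces to $\Ctr^1$.

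For $\Ctr^1$, Proposition~\ref{P:1}(ii) together with \eqref{E:eg-est} yields the size estimate $|\CkerI(w,z)| \lesssim \d(w,z)^{-2n}$, and the difference inequalities \eqref{E:starp}, \eqref{E:star} supply Calder\'on--Zygmund-type smoothness bounds such as
\[
|\CkerI(w,z) - \CkerI(w',z)| \lesssim \frac{\d(w,w')}{\d(w,z)^{2n+1}}, \qquad \d(w,w') \ll \d(w,z),
\]
and symmetrically in the second variable. To isolate the essential singular behavior, I would extract the piece $\Ctrs$ with symmetric kernel $\g(w,z)^{-n}$ taken against the Leray--Levi measure, namely $\Ctrs(f)(z) = \int_{\bndry D} \g(w,z)^{-n} f(w)\, d\l(w)$. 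The difference $\Ctr^1 - \Ctrs$ has singularity of order at most $\d(w,z)^{-2n+1}$, which is integrable in $d\l$ by \eqref{E:2.16} with $\beta = 1$ (and symmetrically in $z$ by Proposition~\ref{P:2}(b)), so $\Ctr^1 - \Ctrs$ is bounded on every $L^p(\bndry D, d\l)$ by Schur's test.

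It remains to prove $L^2$-boundedness of $\Ctrs$; the $L^p$ conclusion for $1 < p < \infty$ then follows from the standard Calder\'on--Zygmund extension theory on spaces of homogeneous type. For this I would verify the $T(1)$ hypotheses on $(\bndry D, \d, d\l)$: the standard kernel conditions have been noted above, and the weak boundedness property can be checked directly against bumps adapted to the balls $\B_r(w)$. The main obstacle is establishing the cancellation conditions $\Ctrs(1), (\Ctrs)^*(1) \in \mathrm{BMO}$, with the adjoint taken in $L^2(\bndry D, d\l)$; this is precisely why one must work with the Leray--Levi measure rather than $d\sigma$, so that the adjoint has a compatible symmetric form. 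The tool, announced in Step~2 of the introduction, is an identity
\[
\Ctrs(f) = \Etr(df) + \Rtrs(f), \qquad f \in C^1(\bndry D),
\]
in which $\Etr$ and $\Rtrs$ have kernels with singularity weaker by one order than $\Ctrs$. Establishing this identity is the delicate step: it should follow from a Stokes-type integration by parts on $\bndry D$ that exploits the compatibility of $d_w(\g^{-n})$ with the Leray--Levi form $\dee\rho \wedge (\deebar\dee\rho)^{n-1}$. Once it is in hand, testing $\Ctrs$ on $L^\infty$-normalized bumps supported in $\B_r(w_0)$ (for which $|df| \lesssim 1/r$) delivers the required $\mathrm{BMO}$ bound on $\Ctrs(1)$, and a parallel argument for $(\Ctrs)^*$ uses that the kernel $\g^{-n}$ is, modulo lower-order terms, symmetric under the $d\l$-adjoint.
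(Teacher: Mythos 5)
Your proposal is correct and follows essentially the same route as the paper: reduction to the essential part $\Ctrs$ with kernel $\g(w,z)^{-n}\,d\l$, Schur's test for the order-$\d(w,z)^{-2n+1}$ remainders, the difference estimates \eqref{E:4.13}--\eqref{E:4.13p}, the key identity $\Ctrs(f)=\Etr(df)+\Rtrs(f)$ obtained by Stokes-type integration against the Leray--Levi form (which with $f=1$ gives the cancellation conditions), and the $T(1)$ theorem on the space of homogeneous type $(\bndry D,\d,d\l)$. The only cosmetic differences are that the paper proves $\Ctrs(1)$ and $(\Ctrs)^*(1)$ are actually continuous rather than merely BMO, normalizes bumps by $|\nabla f_0|\leq 1/r^2$ (the correct Euclidean scale for the non-isotropic ball $\B_r$), and avoids claiming any H\"older smoothness for the full kernel $\CkerI$ in $w$ (which would need more than continuity of $\deebar_w G$) --- but none of this affects your argument, which only uses the smoothness of $\g^{-n}$.
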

On account of the equivalence \eqref{E:2.15} this also gives boundedness in $L^p(\bndry D, d\sigma)$.

\subsection{Essential parts and remainders}\label{SS:dec-two}
 \quad At several steps of our analysis 
   we require a decomposition of the Cauchy integral $\Cin$ and the  Cauchy transform $\Ctr$ into an ``essential part'' plus an acceptable ``remainder'',
    and moreover such decomposition appears in several different forms.
   In fact, first we considered $\Cin = \CinI+\CinII$, with $\CinI$ a Cauchy-Fantappi\'e integral and $\CinII$ a correction term via $\deebar$ (the ``remainder''), and this led to the definition \eqref{E:4.1} of the Cauchy transform $\Ctr$. A new decomposition of $\Ctr$ is given immediately below;
     further decompositions of $\Ctr$ will be needed in Part II, when we study the Cauchy-Szeg\H o projection.
     
      It is worthwhile to point out
   that the remainders that appear in all such decompositions will always be less singular than the corresponding essential parts,
     in the sense that their kernels (hereby generically denoted $R(w,z)$)  are
   easily seen to be controlled via the improved bound
      \begin{equation}\label{E:remainder-bound-a}
 |R(w, z)|\lesssim   \d (w, z)^{-2n+1}
    \end{equation} 
    for any $w, z\in \bndry D$, and satisfy the improved difference condition
     \begin{equation}\label{E:remainder-bound-b}
 |R(w, z)-R(w, z')|\lesssim   \frac{\d(z, z')}{\d (w, z)^{2n}}
    \end{equation} 
whenever  $\d(w, z)\geq c\,\d(z, z')$
 for an appropriate large constant $c$. 
 
 By contrast, the kernels of the essential parts 
       will exhibit no such improvements and will in fact retain the same singularities as the original transform $\Ctr$, see for instance \eqref{E:4.13} below.
    
  \subsection{A new decomposition of $\Ctr$}
  We begin by making a decomposition
   of the Cauchy transform $\Ctr$ that
     will eventually allow us to study its ``adjoint'' on $L^2(\bndry D, d\l)$.  
   Here we take the numerator of the Cauchy-Fantappi\'e integral, essentially
    $j^*\left(G\wedge(\deebar G)^{(n-1)}\right)/(2\pi i)^n$, see \eqref{E:3.2}, and replace it by 
    $j^*(\dee\rho\wedge(\deebar\dee\rho)^{n-1)})/(2\pi i)^n=d\l$, the Leray-Levi measure. 
    So we define
     $\Cins$ by
     \begin{equation}\label{E:4.5}
     \Cins (f) =\!\!\int\limits_{w\in \bndry D}\!\frac{f(w)}{\g(w, z)^n}\, d\l (w)\,,  \quad z\in D.
     \end{equation}
    Looking back at \eqref{E:3.1} we see that
    \begin{equation*}
    G = \dee\rho +\gamma_0 +\sum\limits_{k=0}^n (w_k-z_k) \gamma_k
    \end{equation*}
    where $\gamma_0$, $\gamma_k$ are 1-forms whose coefficients are smooth in $z$ and of 
    class $C^1$ in $w$, with $\gamma_k$ supported in $|w-z|\leq\mu$ and $\gamma_0$
    supported where $|w-z|\geq \mu/2$. 
    As a result
    \begin{equation}\label{E:4.6}
    \Cin (f) (z) = \Cins (f) (z) +
       \Rin (f) (z),\quad z\in D
    \end{equation}
    where the remainder
       $\Rin$ is given by
    \begin{equation}\label{E:4.3}
 \Rin f (z) = \int\limits_{\bndry D}\!\!
 \g (w, z)^{-n}j^*\!
 \big(\alpha_0(w, z) + \sum\limits_j\alpha_j(w, z)(w_j-z_j)\big)f(w)
 \end{equation}
 \begin{equation*}
  \!\!\!\!\!\!\!\!\!\!\!\!\!\!\!\!\!\!\!\!\!\!\!\!\!\!\!\!\!\!\!\!\!\!\!
  \!\!\!\!\!\!\!\!\!\!\!\!\!\!\!\!\!\!\!\!\!\!\!\!\!\!\!\!\!\!\!\!\!\!\!
  \!\!\!\!\!\!\!\!\!
   +\quad \CinII (f)(z).
 \end{equation*}
 Here $\alpha_0(w, z)$ and $\alpha_j(w, z)$ are $(2n-1)$-forms in $w$ whose coefficients are continuous in $w\in\bndry D$ and  smooth in $z\in\bar{D}$, with $\alpha_0 (w, z)$supported away from the diagonal $\{w=z\}$. Also, $\CinII (f)$ is the correction term \eqref{E:3.3a}.
  
 In view of \eqref{E:C-two-bdd} and of the fact that $|\g (w, z)|\approx \d(w, z)^2$,
 we see that the kernel of $\Rin$ is bounded by a multiple of $\d(w, z)^{-2n+1}$, see
 \eqref{E:remainder-bound-a}. Thus, 
 by
 \eqref{E:2.16} and Corollary \ref{C:2} the integral defining 
  $\Rin (f)$ converges absolutely and uniformly for $z\in D$ and hence extends to a continuous function on $\bar D$.

    With this and \eqref{E:4.1} we can define the operator $\Ctrs$ acting on $f$ that satisfies \eqref{E:3.4} by
    \begin{equation*}
    \Ctrs (f) = \Cins (f)\bigg|_{\bndry D}\, .
    \end{equation*}
    As a result we have a corresponding identity
    \begin{equation}\label{E:4.6a}
    \Ctr (f) (z) = \Ctrs (f) (z) +
        \Rtr (f) (z),\quad z\in \bndry D\, .
    \end{equation}
    We point out that
      \begin{equation}\label{E:4.6b}
    \Ctrs (f) (z) = \int\limits_{w\in\bndry D}\!\!\!
    \frac{f(w)}{g(w, z)^n}\,d\lambda (w)
    \end{equation}
    whenever $f$ satisfies the H\"older-like condition \eqref{E:3.4} and $z\in\bndry D$ is such that $f(z)=0$. This results by a passage to the limit in \eqref{E:4.5} where we use
    \eqref{E:4.1} and \eqref{E:Ctr-rep}.  The  remainder operator $\Rtr$ will be dealt with in Section 
    \ref{SS: C-tr-bdd}.
    
\subsection{$\Ctrs$ as a ``derivative''}\quad
A turning point in this analysis
is the realization that $\Ctrs (f)$ can be appropriately expressed
in terms of $df$. For this purpose we define two operators $\Ein$ and $\Rins$. Here the ``essential part'' $\Ein$ acts on continuous 1-forms $\mu$ on $\bndry D$, and the output $\Ein (\mu)$ is the continuous function in $\bar D$
given by 
\begin{equation*}
\Ein (\mu) (z) =c_n\!\!\!\!\int\limits_{w\in\bndry D}\!\!\!\!
\g(w, z)^{-n+1}\,\mu (w)\wedge j^*(\deebar \dee\rho (w, z))^{n-1}
\end{equation*}
with $c_n =1/[(n-1)(2\pi i)^n]$. The ``remainder'' $\Rins$ maps continuous functions on $\bndry D$ to continuous functions on $\bar{D}$  and is defined in a manner analogous to \eqref{E:4.3}.

 \begin{Prop}\label{P:6}
 If $f\in C^1(\bndry D)$, then
 \begin{equation}
 \label{E:4.4a}
 \Cins(f)(z) = \Ein(df)(z) +\Rins(f)(z),\quad \mbox{for}\ z\in D.
 \end{equation}
 \end{Prop}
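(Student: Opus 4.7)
The approach is Stokes' theorem on the closed manifold $\bndry D$, combined with the algebraic identity $\g^{-n}\,d_w\g = -\tfrac{1}{n-1}\,d_w(\g^{-n+1})$. First I would decompose $j^*\dee\r(w) = j^*(d_w\g)(w,z) - j^*\beta(w,z)$, where $\beta$ is a $1$-form in $w$ with coefficients of order $|w-z|$. Indeed, on the support of $\chi=1$ one has $\g=\Dep$, and differentiating
\begin{equation*}
\Dep(w,z) = \langle\dee\r(w),\,w-z\rangle - \tfrac12\sum_{j,k}\tau^\epsilon_{jk}(w)(w_j-z_j)(w_k-z_k)
\end{equation*}
yields $d_w\g(w,z) = \dee\r(w) + \beta(w,z)$, with $\beta$ gathering the three terms $\sum_{j,k}(\r_{w_jw_k}-\tau^\epsilon_{jk})(w_j-z_j)\,dw_k$, $\sum_{j,k}\r_{w_j\bar w_k}(w_j-z_j)\,d\bar w_k$, and $-\tfrac12\sum_{j,k}d_w\tau^\epsilon_{jk}\cdot(w_j-z_j)(w_k-z_k)$, each of which has coefficients $O(|w-z|)$ (since $\r\in C^2$, $\tau^\epsilon\in C^1$, and $\r_{w_jw_k}-\tau^\epsilon_{jk}$ is continuous). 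Outside the support of $\chi$ one has $|w-z|\geq\mu/2$, so $\g$ is bounded below and the analogous contributions are smooth and bounded.

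Substituting this and writing $\gamma := j^*(\deebar\dee\r)^{n-1}$, I split
\begin{equation*}
\Cins(f)(z) = \frac{1}{(2\pi i)^n}\!\int_{\bndry D}\!\frac{f}{\g^n}\,j^*(d_w\g)\w\gamma \;-\; \frac{1}{(2\pi i)^n}\!\int_{\bndry D}\!\frac{f}{\g^n}\,j^*\beta\w\gamma.
\end{equation*}
For the main integral, apply Stokes' theorem to the $(2n-2)$-form $\omega := f\cdot\g^{-n+1}\cdot\gamma$ on $\bndry D$. Since $d(\deebar\dee\r) = 0$ (as $\dee^2=\deebar^2=0$) one has $d\gamma = 0$, and the product rule gives
\begin{equation*}
d\omega = \g^{-n+1}\,df\w\gamma - (n-1)\,f\,\g^{-n}\,d_w\g\w\gamma.
\end{equation*}
For fixed $z\in D$, $\g(w,z)\neq 0$ for all $w\in\bndry D$ by \eqref{E:2.4} (using $\r(z)<0$), hence $\omega$ is smooth on the closed manifold $\bndry D$ and $\int_{\bndry D}d\omega = 0$ holds directly. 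Rearranging and dividing by $(2\pi i)^n$ yields
\begin{equation*}
\frac{1}{(2\pi i)^n}\!\int_{\bndry D}\!\frac{f}{\g^n}\,j^*(d_w\g)\w\gamma = \frac{1}{(n-1)(2\pi i)^n}\!\int_{\bndry D}\!\g^{-n+1}\,df\w\gamma = \Ein(df)(z).
\end{equation*}

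Thus $\Cins(f) = \Ein(df) + \Rins(f)$, where $\Rins(f)(z)$ comprises the $\beta$-integral above together with the smooth bounded contributions arising from the region $|w-z|\geq\mu/2$. Its kernel is dominated by $|w-z|\cdot|\g|^{-n}\lesssim\d(w,z)^{-2n+1}$ via \eqref{E:2.14} and $|\g|\approx\d^2$ (see \eqref{E:eg-est}), matching the improved remainder bound \eqref{E:remainder-bound-a}. The substantive step is the initial decomposition of $\dee\r$ -- the observation that the numerator appearing in the Leray-Levi measure agrees with $d_w\g$ modulo an order-$|w-z|$ error; once that is in hand Stokes' applies directly, with no truncation needed since $\g$ is non-vanishing on $\bndry D$ for $z\in D$. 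The remaining task of verifying that the cutoff contributions fold cleanly into $\Rins$ is routine bookkeeping.
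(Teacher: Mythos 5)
Your argument is essentially the paper's: the identity is obtained by applying Stokes' theorem to the $(2n-2)$-form $f\cdot\g^{-n+1}\,(\deebar\dee\rho)^{n-1}$, using the product rule together with the decomposition $d_w\g=\dee\r(w)+\beta_0+\sum_j\beta_j(w,z)(w_j-z_j)$, where the $\beta$-terms carry an extra factor of $|w-z|$ and hence fold into the remainder $\Rins$ with the improved bound \eqref{E:remainder-bound-a}. The paper packages the Stokes step as Lemma \ref{L:9} and otherwise proceeds exactly as you do.

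The one point you assert without justification is precisely the point the paper takes care over. You write that $d(\deebar\dee\r)=0$ ``since $\dee^2=\deebar^2=0$'' and that $\omega=f\,\g^{-n+1}\gamma$ is smooth, so that $\int_{\bndry D}d\omega=0$ ``holds directly.'' But $\r$ is only of class $C^2$, so the coefficients of $\deebar\dee\r$ are merely continuous: $d(\deebar\dee\r)$ involves third derivatives of $\r$ and is not classically defined, and $\omega$ is not $C^1$ (its coefficients are products of $C^1$ functions with continuous ones), so Stokes' theorem does not apply as stated. This is exactly the content of Lemma \ref{L:9}, whose proof first treats the case $\r\in C^3$ (where your computation is legitimate) and then passes to $C^2$ by approximating $\r$ in the $C^2$-norm by a sequence $\{\rho_k\}$ of $C^3$ functions. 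The gap is entirely repairable by this standard approximation, but in a paper whose whole point is minimal smoothness it should not be elided.
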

 \noindent  An analogous result can be found in \cite[Proposition 5.1]{LS-4}. 
  As a consequence we have
  \begin{equation}\label{E:4.4}
  \Ctrs(f)(z) = \Etr(df)(z) +\Rtrs(f)(z),\quad \mbox{for}\ z\in \bndry D\, ,
  \end{equation}
  where $\Etr$ and $\Rtrs$ are defined as the corresponding limits on $\bndry D$ of $\Ein$ and $\Rins$.
  
 The simple integration lemma below will be used here and at several later occasions. Suppose $F$ and $\rho$ are a pair of functions on $\bndry D$ with $F$ of class $C^1$ and $\rho$ of class $C^2$.
 \begin{Lem}\label{L:9}
 \begin{equation*}
 \int\limits_{\bndry D}\!\! dF\wedge (\deebar\dee\rho)^{n-1}=0\, .
 \end{equation*}
 \end{Lem}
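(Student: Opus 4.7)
The plan is to recognize this as a standard Stokes-type identity for a $d$-closed form on a closed manifold. The key algebraic observation is that $\deebar\dee\rho$ is a $d$-closed $(1,1)$-form: indeed, for smooth $\rho$ one has
$$
d(\deebar\dee\rho) = (\dee +\deebar)(\deebar\dee\rho) = \deebar\deebar\dee\rho - \deebar\dee\dee\rho = 0
$$
since $\dee^2 = \deebar^2 = 0$. Consequently $(\deebar\dee\rho)^{n-1}$ is also $d$-closed, and hence
$$
d\!\left(F\,(\deebar\dee\rho)^{n-1}\right)\ =\ dF\!\wedge\!(\deebar\dee\rho)^{n-1}\ +\ F\cdot d\!\left((\deebar\dee\rho)^{n-1}\right)\ =\ dF\!\wedge\!(\deebar\dee\rho)^{n-1}.
$$
Since $\bndry D$ is a closed (compact, boundaryless) oriented $(2n-1)$-manifold, Stokes' theorem gives
$$
\int_{\bndry D}\! j^*\!\left(dF\!\wedge\!(\deebar\dee\rho)^{n-1}\right)\ =\ \int_{\bndry D}\!j^*\!\left(d(F(\deebar\dee\rho)^{n-1})\right)\ =\ 0,
$$
which is the assertion.

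The one technical obstacle is that the hypothesis only provides $\rho\in C^2$, so $(\deebar\dee\rho)^{n-1}$ has merely continuous coefficients and the differentiation step above is not literally justified at the stated regularity. I would resolve this by approximation: choose a sequence of $C^\infty$ functions $\rho_k$ defined in a neighborhood of $\bar D$ with $\rho_k\to\rho$ in $C^2$ on a neighborhood of $\bndry D$. For each $k$ the calculation above applies verbatim to yield
$$
\int_{\bndry D}\! dF\!\wedge\!(\deebar\dee\rho_k)^{n-1}\ =\ 0.
$$
Since $\rho_k\to\rho$ in $C^2$ on $\bndry D$, the coefficients of $(\deebar\dee\rho_k)^{n-1}$ converge uniformly on $\bndry D$ to those of $(\deebar\dee\rho)^{n-1}$; combined with the boundedness of $dF$ (which belongs to $C^0$), the integrand converges uniformly on the compact set $\bndry D$, and the equality persists in the limit.

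I expect the algebraic step, namely the $d$-closedness of $\deebar\dee\rho$, to be completely routine; the regularity issue described above is the only place where care is needed, but the $C^2$-approximation argument handles it cleanly.
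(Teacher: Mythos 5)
Your argument is correct and coincides with the paper's own proof: for smoother $\rho$ one uses $d(\deebar\dee\rho)=0$ together with Stokes' theorem on the closed manifold $\bndry D$, and the case $\rho\in C^2$ follows by approximating $\rho$ in the $C^2$-norm by smoother functions, exactly as you do. The only cosmetic difference is that the paper approximates by $C^3$ functions rather than $C^\infty$, which changes nothing.
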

 If we make the stricter assumption that $\rho$ is of class $C^3$, then since $d(\deebar \dee\rho)=0$, the assertion of the lemma follows immediately from Stokes' theorem and the fact that $d(F\wedge (\deebar\dee\rho)^{n-1})= dF\wedge (\deebar\dee\rho)^{n-1}$. The case when $\rho$ is merely of class $C^2$ then 
 follows
  this by approximating $\rho$ in the $C^2$-norm by 
 $\{\rho_k\}_k$ with each $\rho_k$ of class $C^3$.
\medskip

 \noindent{\em Proof of Proposition \ref{P:6}.}\ 
 We fix $z$ in the interior of $D$;  our goal is to apply Lemma \ref{L:9} to
$$
F(w)=f(w)\cdot g(w, z)^{-n+1}
$$
(and $\rho$ the defining function of our domain). We claim that computing 
$dF\wedge (\dee\deebar \rho )^{n-1}$ for such an $F$ will give rise to several terms. One of these will be $-\g (w,z)^{-n} f(w) d\l (w)$ (whose integral over $\bndry D$ is precisely $-\Cins (f) (z)$), while the integrals of the other terms will be $\Ein (df) (z) +\Rins (f) (z)$. Specifically, we have
 \begin{equation*}
 dF = (-n+1)f(w)\cdot g(w, z)^{-n}d_wg(w, z) + df(w)\cdot g^{-n+1}(w, z),
 \end{equation*}
 and
  \begin{equation}\label{E:4.1.a}
  d_w\g = \dee\r (w) +\beta_0(w, z) + \sum\limits_j\beta_j(w, z)(w_j-z_j).
  \end{equation}
 Here $\beta_j$ and $\beta_0$ are 1-forms in $w$ that are smooth as $z$ varies over $\bar D$, and continuous as $w$ varies over $\bndry D$, with $\beta_0(w, z)$ supported away from the diagonal $\{w=z\}$. This leads to \eqref{E:4.4a},
thus proving the proposition.
 \qed

    \subsection{The virtual adjoint of $\Ctrs$}\quad Here it is crucial that we take the inner product, with respect to which the ``adjoint'' is defined, to be given by
    \begin{equation*}
    (f_1, f_2) = \int\limits_{\bndry D}\!\! f_1(w)\,\bar{f_2}(w)\, d\l (w)\, ,
    \end{equation*}
    with $d\l$ the Leray-Levi measure, and we will let $(\Ctrs)^*$ denote the adjoint of $\Ctrs$ with respect to such inner product. We have the following representation for $(\Ctrs)^*$.
    \begin{Prop}\label{P:7} 
    There is a linear operator $(\Ctrs)^{\!*}$, acting on functions that are H\"older in the sense of \eqref{E:3.4} for some $\alpha$, so that
    \begin{equation}\label{E:4.7a}
    (\Ctrs f_1, f_2) = (f_1, (\Ctrs)^{\!*} f_2)
    \end{equation}
    for any pair of such functions. Moreover
    \begin{equation}\label{E:4.7}
    (\Ctrs)^{\!*} (f) (z) = \int\limits_{w\in\bndry D}\!\!\!\bar{\g}(z, w)^{-n}f(w)\, d\l (w)
    \end{equation}
    for those $z\in\bndry D$ such $f(z)=0$.
    \end{Prop}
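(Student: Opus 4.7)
My plan is to prove Proposition~\ref{P:7} through a Fubini--duality calculation at interior approximants $z\mapsto z^\delta$, followed by the limit $\delta\to 0$. Fix $f_1, f_2$ satisfying the H\"older condition \eqref{E:3.4}. Since $\Cin f_1$ extends continuously to $\bar D$ by Proposition~\ref{P:5} and $\Rin f_1$ does too (by the absolute convergence remarks following \eqref{E:4.3}), the essential part $\Cins f_1 = \Cin f_1 - \Rin f_1$ extends continuously to $\bar D$, so $\Ctrs f_1(z) = \lim_{\delta\to 0}\Cins f_1(z^\delta)$ for $z\in\bndry D$. For each fixed $\delta>0$, Corollary~\ref{C:2} yields $|\g(w, z^\delta)|\gtrsim \delta + \d(w,z)^2$, whence
\[
\iint_{\bndry D\times\bndry D} |\g(w, z^\delta)|^{-n}\,|f_1(w)|\,|f_2(z)|\,d\l(w)\,d\l(z) < \infty,
\]
and Fubini gives
\[
\int_{\bndry D}\Cins f_1(z^\delta)\,\overline{f_2(z)}\,d\l(z) = \int_{\bndry D} f_1(w)\,\overline{S_\delta f_2(w)}\,d\l(w),
\]
where $S_\delta f_2(w) := \int_{\bndry D} \bar{\g}(w, z^\delta)^{-n}\, f_2(z)\,d\l(z)$. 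Bounded convergence upgrades the left side to $(\Ctrs f_1, f_2)$ as $\delta\to 0$.

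Next I identify the pointwise limit of $S_\delta f_2(w)$ at points $w\in\bndry D$ where $f_2(w)=0$. There the H\"older estimate $|f_2(z)|=|f_2(z)-f_2(w)|\lesssim \d(w,z)^\alpha$ and the bound $|\bar{\g}(w, z^\delta)|^{-n}\leq C\,\d(w,z)^{-2n}$ from Corollary~\ref{C:2} combine into a $\delta$-uniform majorant $C\,\d(w,z)^{-2n+\alpha}$, which is $d\l$-integrable by \eqref{E:2.16}. Dominated convergence then delivers
\[
\lim_{\delta\to 0} S_\delta f_2(w) = \int_{\bndry D} \bar{\g}(w, z)^{-n}\, f_2(z)\,d\l(z),\qquad \text{when } f_2(w)=0,
\]
which is precisely the formula \eqref{E:4.7} after renaming $w\leftrightarrow z$.

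The main obstacle is twofold: (i) producing the pointwise limit at $w$ with $f_2(w)\neq 0$ and (ii) exchanging $\lim_{\delta\to 0}$ with the outer $w$-integration to deliver the adjoint identity \eqref{E:4.7a}. Splitting
\[
S_\delta f_2(w) = \int_{\bndry D}\! \bar{\g}(w, z^\delta)^{-n}\big(f_2(z)-f_2(w)\big)\, d\l(z) + f_2(w)\,J_\delta(w),
\]
with $J_\delta(w):=\int_{\bndry D} \bar{\g}(w, z^\delta)^{-n}\,d\l(z)$, the first piece is $\delta$-uniformly bounded by the H\"older argument above, but the crude estimate $|J_\delta(w)|\lesssim \log(1/\delta)$ from \eqref{E:2.16a} is not $\delta$-uniform. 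To resolve this I would exploit the already-established convergence of the left side to $(\Ctrs f_1, f_2)$: this holds for \emph{every} H\"older $f_1$, so the iterated integrals on the right side converge against every such test function. This forces $S_\delta f_2$ to have a weak limit against H\"older functions, which defines $(\Ctrs)^*f_2$ intrinsically, automatically satisfies \eqref{E:4.7a}, and coincides pointwise with \eqref{E:4.7} wherever $f_2(z) = 0$. Uniqueness of $(\Ctrs)^*$ is then automatic from the density of H\"older functions in $L^2(\bndry D, d\l)$.
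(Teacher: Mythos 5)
Your setup---the Fubini computation at the interior approximants $z^\delta$ and the dominated-convergence identification of the pointwise limit where $f_2$ vanishes---is exactly the paper's route (your $S_\delta$ is, after relabeling the variables, the paper's genuine adjoint $(\Ctrsd)^*$ of the bounded operator $\Ctrsd$). But the step you yourself flag as ``the main obstacle'' is the heart of the proposition, and your proposed resolution does not close it. The weak limit $\lim_{\delta\to 0}\int f_1\,\overline{S_\delta f_2}\,d\l=(\Ctrs f_1,f_2)$ only defines a linear functional on the space of H\"older functions; to make sense of the right-hand side of \eqref{E:4.7a} as an inner product, and to speak of the pointwise values of $(\Ctrs)^*f_2$ demanded by \eqref{E:4.7}, you must produce $(\Ctrs)^*f_2$ as an actual function. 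A Riesz-representation argument is unavailable here: at this stage the $L^2$-boundedness of $\Ctrs$ has not yet been proved (Proposition \ref{P:7} is an ingredient of the $T(1)$ argument that will establish it), so the functional $f_1\mapsto(\Ctrs f_1,f_2)$ is not known to be $L^2$-continuous on the dense subspace of H\"older functions. And even granted a representing function, weak convergence alone would not let you evaluate it pointwise on $\{f_2=0\}$ without some uniform-in-$\delta$ control---which brings you straight back to the term $J_\delta$ you could not bound.

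What is missing is precisely the content of the paper's Proposition \ref{P:8}: the quantities $\hda=(\Ctrsd)^*(1)$---your $J_\delta$---converge \emph{uniformly} to a continuous function $\ha$ as $\delta\to 0$, so the crude $\log(1/\delta)$ estimate from \eqref{E:2.16a} is never invoked. This is not a soft fact: it is proved by applying the Stokes-type Lemma \ref{L:9} to a suitable power of $\bar\g(z,w^\delta)$ and exploiting the cancellation $j^*\dee\rho=-j^*\deebar\rho$ together with the specific structure of $d_w\bar\g(z,w^\delta)$ and of the Leray--Levi measure; the apparently log-divergent part of the integral is thereby converted into terms with kernels of size $\d(w,z)^{-2n+1}$ plus an $O(\delta\log(1/\delta))$ error. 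Once this is in hand, your split $S_\delta f_2(w)=\int\bar\g(w,z^\delta)^{-n}\bigl(f_2(z)-f_2(w)\bigr)\,d\l(z)+f_2(w)J_\delta(w)$ converges uniformly to a genuine function, and both \eqref{E:4.7a} and \eqref{E:4.7} follow exactly as you intend.
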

    (Notice that the integral above converges absolutely because when $z$ is outside the support of $f$ one has
    $|f(w)|\lesssim \d(w, z)^\alpha$.)
    \begin{proof}
    We consider the family of operators $\Ctrsd$, given by
    $$
    \Ctrsd (f) (z) = 
    \int\limits_{w\in\bndry D}\!\!\!\g(w, z^\delta)^{-n}f(w)\, d\l (w), \quad z\in\bndry D\, ,
    $$
    where $z^\delta = z+\delta\nu_z$ is defined as in Corollary \ref{C:2}.  
       
     Now $\g (w, z^\delta)^{-n}$ is uniformly bounded in $z$ and $w$ for each $\delta>0$ (see Corollary \ref{C:2}), and hence for each $\delta$ the operator $\Ctrsd$ is bounded in $L^2(\bndry D, d\l)$. Its (genuine) adjoint 
     $(\Ctrsd)^*$ is then given by
    \begin{equation}\label{E:4.7s}
    (\Ctrsd)^*(f)(z) = \int\limits_{w\in\bndry D}\!\!\!
    \bar g(z, w^\delta)^{-n}f(w)\, d\l (w)
    \end{equation}
    where again $w^\delta = w+\delta\nu_w$, and $(\Ctrsd (f_1), f_2) = (f_1, (\Ctrsd)^*(f_2))$, whenever
    $f_1$, $f_2\in L^2(\bndry D, d\l)$.  
    It will suffice to see that whenever $f$ is H\"older, $(\Ctrsd)^*(f)$ 
    converges uniformly, as $\delta\to 0$, to a limit, and to take $(\Ctrs)^*(f)$ to be such limit. The convergence will be shown using Proposition \ref{P:8} below, which is the special case when $f=1$; we give a separate proof as its full conclusion will be needed later.
       \begin{Prop}\label{P:8} With same notations as above, we have
    $$
    (\Ctrsd)^*(1) = \hda\, ,\quad \delta>0
    $$
    where $\{\hda\}_\delta$ are continuous functions on $\bndry D$. The $\hda$ converge uniformly to 
    a continuous function $\ha$ as $\delta\to 0$.
       \end{Prop}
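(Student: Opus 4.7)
Corollary \ref{C:2} gives $|g(w,z^\delta)|\gtrsim\delta$ uniformly for $w,z\in\bndry D$, so the kernel $\bar g(w,z^\delta)^{-n}$ is jointly continuous and uniformly bounded on $\bndry D\times\bndry D$. Continuity of $\hda$ then follows by dominated convergence.

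\textbf{Uniform convergence, setup.} My plan is to mimic the spirit of Proposition \ref{P:6} and use Lemma \ref{L:9} to rewrite $\hda$ in a form whose kernel has the less singular order $\d^{-2n+1}$ rather than $\d^{-2n}$. Starting from the equivalent representation $\hda(w)=\int_{\bndry D}\bar g(w,z^\delta)^{-n}\,d\lambda(z)$ (obtained from \eqref{E:4.7s} with $f=1$ by relabelling variables), apply Lemma \ref{L:9} to the $C^1$ function $F(z)=-(n-1)^{-1}\bar g(w,z^\delta)^{-n+1}$ (defined for fixed $w,\delta$), yielding
\begin{equation*}
0=\int_{\bndry D}\bar g(w,z^\delta)^{-n}\,d_z\bar g(w,z^\delta)\wedge(\deebar\dee\rho(z))^{n-1}.
\end{equation*}
Since $g(w,\cdot)$ is a polynomial of degree two near the diagonal, the chain rule with $z^\delta=z+\delta\nu_z$ gives
\begin{equation*}
d_z\bar g(w,z^\delta)=-\overline{\dee\rho(w)}\big|_z+O(\delta)+O(|w-z|),
\end{equation*}
as $1$-forms in $z$, where the $O(\delta)$ arises from the $\delta\, d\nu_z$ in $dz^\delta$ and the $O(|w-z|)$ from the quadratic part of $g$. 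Replacing $\overline{\dee\rho(w)}|_z$ by $\overline{\dee\rho(z)}=\deebar\rho(z)$ (since $\rho$ is real) costs another $O(|w-z|)$, and the identity $j^*(\dee\rho+\deebar\rho)=0$ on $\bndry D$ converts the leading contribution into a multiple of $\hda(w)$. Rearranging produces
\begin{equation*}
(2\pi i)^n\hda(w)=-\int_{\bndry D}\bar g(w,z^\delta)^{-n}\bigl[\delta\,\alpha_\delta(w,z)+|w-z|\,\beta(w,z)\bigr]\wedge(\deebar\dee\rho(z))^{n-1},
\end{equation*}
for certain forms $\alpha_\delta,\beta$ with uniformly bounded coefficients.

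\textbf{Error estimates and conclusion.} For the $|w-z|$ integral, combining $|\bar g(w,z^\delta)|^{-n}\lesssim\d(w,z)^{-2n}$ with $|w-z|\lesssim\d(w,z)$ yields an integrand of order $\d(w,z)^{-2n+1}$, uniformly integrable in $\delta$ by \eqref{E:2.16}; dominated convergence then identifies its pointwise limit as a function of $w$ continuous on $\bndry D$. For the $\delta$ integral, Corollary \ref{C:2} gives $|\bar g(w,z^\delta)|\gtrsim\delta+\d(w,z)^2$, and a radial estimate with $d\lambda(z)\approx r^{2n-1}\,dr$ on $\B_r(w)$ yields
\begin{equation*}
\int_{\bndry D}\delta\,\bigl(\delta+\d(w,z)^2\bigr)^{-n}\,d\lambda(z)=O\bigl(\delta\log(1/\delta)\bigr),
\end{equation*}
which vanishes uniformly in $w$ as $\delta\to 0$. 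Together these give $\hda\to\ha$ uniformly on $\bndry D$ with $\ha$ continuous, as required.

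\textbf{Main obstacle.} The delicate step I anticipate is the expansion of $d_z\bar g(w,z^\delta)$: it requires carefully tracking how $d\nu_z$, which is merely $C^0$ since $\rho\in C^2$, enters through the chain rule applied to $z^\delta=z+\delta\nu_z$, and then extracting the leading term $-\overline{\dee\rho(w)}|_z$ from the polynomial structure of $g$ in its second argument with error coefficients of the indicated orders uniform in $w,z,\delta$. The cutoff $\chi$ in the definition of $g$ produces additional terms supported where $|w-z|\geq\mu/2$, but on that region the integrand is smooth and bounded, and these terms are harmless.
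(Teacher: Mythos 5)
Your proposal follows essentially the same route as the paper's proof: apply Lemma \ref{L:9} to $F=\mathrm{const}\cdot\bar g(\cdot,\cdot^{\,\delta})^{-n+1}$, use $j^*\dee\rho=-j^*\deebar\rho$ to extract $(2\pi i)^n\hda$ from the leading term of $d\bar g$, bound the $O(|w-z|)$ errors by a $\d^{-2n+1}$ kernel via \eqref{E:2.16}, and show the $O(\delta)$ terms contribute $O(\delta\log(1/\delta))$ uniformly. The expansion of $d_z\bar g(w,z^\delta)$ that you flag as the main obstacle is handled exactly as you anticipate ($\nu_z$ is $C^1$ since $\rho\in C^2$, so $\delta\,d\nu_z$ contributes a harmless $O(\delta)$ form with continuous coefficients), so there is no gap.
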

    \begin{proof}
    The proof follows the spirit of Proposition \ref{P:6}, expressing in this case $(\Ctrsd)^*(f)$ with $f=1$ in terms of $df=0$, with an acceptable error term. Here we will apply Lemma \ref{L:9} with $F(w) = \bar{g}(z, w^\delta)^{-n-1}$.

  If   $|z-w|\leq \mu/2$ then
    \begin{equation*}
    d_w(\bar g(z, w^\delta)) = 
    \end{equation*}
    \begin{equation*}
    =\, d_w\!\left(\langle\deebar\rho (z), \bar z -\bar w^{\,\delta}\rangle\right) 
      \,  -\, \frac12\,d_w\!\left(\sum\limits_{j, k}\bar\tau^{\,\epsilon}_{jk}(z)(\bar z_j -\bar w^{\,\delta}_j)
    (\bar z_k -\bar w^{\,\delta}_k)
    \right)\, .
    \end{equation*}
    Now 
    $$
    \displaystyle{d_w\!\left(\langle\deebar\rho (z), \bar z -\bar w\rangle\right) = -\deebar\rho (w) +
    \sum\limits_j\!
    \left(\!
    \frac{\dee\rho (w)}{\dee\bar w_j} - \frac{\dee\rho (z)}{\dee\bar z_j}\, 
    \right)\!
    d\bar w_j\, .
    }$$
     Also, $w^\delta = w+\delta\nu_w$, and 
       if we write $\nu_w=(\nu_w^1,\ldots, \nu_w^n)$ then $d_w\nu_w$  are 1-forms with continuous coefficients. Altogether then
    \begin{equation*}
    d_w\!\left(\bar\g(z, w^\delta)\right) = -\deebar \rho (w)\ +\
    \end{equation*}
    \begin{equation*}
    +\beta_0^* + \sum\limits_{j}\beta^*_j(w, z) (\bar w_j-\bar z_j) +
    \sum\limits_{j}\beta^{**}_j(w, z)\!\left(\!
    \frac{\dee\rho (w)}{\dee\bar w_j} - \frac{\dee\rho (z)}{\dee\bar z_j}\, 
    \right)\! +\, \delta\,\gamma^\delta (z, w)\, .
    \end{equation*}
    Here $\beta_0^*$, $\beta^*_j$, $\beta^{**}_j$ and $\gamma^\delta$ are 1-forms in $w$ with
     coefficients that are $C^1$ in $z\in\bar{D}$ and  in $w\in \bndry D$; also $\gamma^\delta(z, w)$ is bounded as
     $\delta\to 0$, while $\beta_0^*(w, z)$ is supported away from the diagonal $\{w=z\}$. 
     
     Keeping in mind that
  $j^*d\rho =0$ (or its equivalent formulation: $j^*\dee\rho = -j^*\deebar\rho$), we obtain 
  from Lemma \ref{L:9} that
   $$
   \hda = (\Ctrsd)^*(1) = \int\limits_{\bndry D}\bar g(z, w^\delta)^{-n}\, d\l (w) \ =\ I_\delta^1\ +\ I^2_\delta
   $$
   with 
   \begin{eqnarray*}
   I_\delta^1 =\int\limits_{\bndry D} \bar \g(z, w^\delta)^{-n}j^*\big[a_0(w, z) +
   \sum\limits_{j=1}^na_j(w, z)(\bar w_j-\bar z_j) + b_j(w, z)\big(\frac{\dee\rho}{\dee\bar w_j}(w) - \frac{\dee\rho}{\dee \bar z_j}(z)\big)\big]
     \end{eqnarray*}
where       $a_0, a_j$ and $b_j$ are $(2n-1)$-forms in $w$, with coefficients that are continuous in $z\in D$ and $w\in\bndry D$, with $a_0(w, z)=0$ away from the diagonal $\{w=z\}$.  From this it is evident that
    $I^1_\delta$ converges uniformly as $\delta \to 0$ to a continuous function $I^1$. 

   Moreover
   $$
   I^2_\delta = \int\limits_{\bndry D} j^* (A_\delta(w, z))\, 
   $$
   where  $A_\delta (w, z)$ is a $(2n-1)$-form in $w$
     that
    satisfies the bound
   $$
   |A_\delta (w, z)|\lesssim \delta\,|\ge(w, z)|^{-n}\, ,
     $$
   and
   it follows by \eqref{E:2.16a} that $I^2_\delta = O(\delta\log{1/\delta})$ as $\delta\to 0$, 
   and hence this tends to zero uniformly in $z$, so that
   $$
   \hda (z)\to I^1(z) =: \ha (z)\quad \mbox{uniformly in }\ z,\quad \mbox{as}\ \delta\to 0.
   $$
   The proof of Proposition \ref{P:8} is concluded.
    \end{proof}
    We may now 
    complete
        the proof of Proposition \ref{P:7} . Turning back to $(\Ctrsd)^* (f)$ as expressed in
         \eqref{E:4.7s} we see that
    $$
    (\Ctrsd)^* (f) = \int\limits_{\bndry D}\!\!
    \bar{\g}(z, w)^{-n}[f(w)-f(z)]\, d\l(w) + \hda (z) f(z)\, ,
    $$
    since $(\Ctrsd)^* (1) = \hda $. Now if $|f(w)-f(z)|\lesssim \d (w, z)^\alpha, \alpha<1$, we clearly have the convergence
     of  $(\Ctrsd)^* (f)$ as $\delta\to 0$, to a limit which we call $(\Ctrs)^* (f)$ and which equals
     $$
     (\Ctrs)^* (f) = \int\limits_{\bndry D}\!\!
    \bar{\g}(z, w)^{-n}[f(w)-f(z)]\, d\l(w) + \ha (z) f(z). 
     $$
     With this the duality identities \eqref{E:4.7a} and \eqref{E:4.7} are established, proving Proposition \ref{P:7}.
         \end{proof}
    \subsection{$L^p$-boundedness of $\Ctr$}\label{SS: C-tr-bdd} We shall prove the  boundedness of $\Ctr$ by applying to $\Ctrs$ the $T(1)$ theorem in the form given in \cite{C-1}. To this end, in this section we go over the key hypotheses of this theorem,
      namely
    \begin{itemize}
    \item the cancellation properties of $\Ctrs$;
    \item the action of $\Ctrs$ on ``bump functions'';
    \item the difference estimates for the kernel of $\Ctrs$.
    \end{itemize}

    We first recall the cancellation properties of $\Ctrs$. We have
    \begin{equation*}
    \Ctrs (1) = \h\quad \mbox{and}\quad (\Ctrs)^*(1) =\ha
    \end{equation*}
    where both $\h$ and $\ha$ are continuous functions.
      This is what was proved for
    $(\Ctrs)^*(1)$ in Proposition \ref{P:8}. The proof works as well for $\Ctrs (1)$, using
     \eqref{E:4.4}, by which
    $\Ctrs (1)= \Rtrs (1)$.
       Now
        $\Rtrs (1)$ can be treated like $I^1_\delta$ in the proof of 
    Proposition \ref{P:8}.
    
     Next we come to the action of $\Ctrs$ on ``bump functions''. We fix $\alpha$, $0<\alpha<1$, and say that
    $f$ is a {\em normalized bump function associated with a boundary ball $\B_r (\wh)$} (with $\wh\in\bndry D$), if it is
    supported in $\B_r (\wh)$ and satisfies
    \begin{equation*}
    |f(z)|\leq 1,\quad |f(z)- f(z')|\leq \left(\frac{\d (z, z')}{r}\right)^{\!\alpha},\quad \mbox{for all}\quad z, z'\in \bndry D.
    \end{equation*}
  The main fact we use for these bump functions is a consequence of 
   Lemma \ref{L:general} below.
   
   Suppose $T$ is a linear transformation defined on functions satisfying 
  the H\"older-type regularity \eqref{E:3.4} for some $\alpha>0$, and mapping these to continuous functions on
  $\bndry D$. Assume further that $T$ has a kernel $K(w, z)$ so that
  \begin{equation}\label{E:T-repr-1}
  T(h)(z)= \int\limits_{\bndry D}\!\! K(w, z)h(w)\, d\lambda (w)
  \end{equation}
  holds whenever $h(z)=0$, with $|K(w, z)|\leq \d (w,z)^{-2n}$. Suppose in addition that
  \begin{equation}\label{E:star-bump}
  |T(f_0)(\wh)|\lesssim 1
  \end{equation}
  whenever $f_0$ is a $C^1$-smooth function supported in a ball $\B_r(\wh)$, and $|f_0|\leq 1$ and $|\nabla f_0|\leq 1/r^2$ on $\bndry D$. 
      \begin{Lem}\label{L:general}
     With the above  assumptions,
     the following holds whenever $f$ is a normalized bump function associated to $\B_r(\wh)$:

  \medskip
  
  (a)\quad $\sup\limits_{z\in\bndry D}|T(f)(z)|\lesssim 1$,\quad
   and
  \medskip
  
  (b)\quad $\| T(f)\|^2_{L^2(\bndry D, d\lambda)}\lesssim r^{2n}$.
   \end{Lem}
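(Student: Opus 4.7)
The plan is to split part (a) into two cases according to the relative size of $\d(z,\wh)$ and $r$, and then to derive (b) by integrating the pointwise bounds from (a), using the quantitative decay of $|T(f)(z)|$ for $z$ away from the support of $f$.

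For (a), I would first fix a large constant $K$ depending only on the quasi-triangle constants. The first regime is $\d(z,\wh)\geq Kr$: since $f$ is supported in $\B_r(\wh)$, we have $f(z)=0$, so the kernel representation \eqref{E:T-repr-1} applies; using $|K(w,z)|\leq \d(w,z)^{-2n}$ together with $\d(w,z)\approx \d(\wh,z)$ for $w\in\B_r(\wh)$ (once $K$ is large enough) and the measure estimate $\l(\B_r(\wh))\lesssim r^{2n}$ yields the quantitative bound $|T(f)(z)|\lesssim (r/\d(\wh,z))^{2n}$, in particular $\lesssim 1$. In the complementary regime $\d(z,\wh)<Kr$, the idea is to subtract off a smooth piece so that the hypothesis \eqref{E:star-bump} can be invoked. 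I would pick $r'\approx r$ with $\B_r(\wh)\subset\B_{r'}(z)$ and, using the special coordinate system of Section \ref{SS:spec-coord} centered at $z$, construct a $C^1$-smooth non-isotropic cutoff $\eta$ supported in $\B_{r'}(z)$ with $\eta(z)=1$, $0\leq\eta\leq 1$ and $|\nabla\eta|\lesssim 1/(r')^2$. Then $f_0:=f(z)\,\eta$ is a legitimate test function for \eqref{E:star-bump}, which gives $|T(f_0)(z)|\lesssim 1$. The remainder $g:=f-f_0$ satisfies $g(z)=0$, so \eqref{E:T-repr-1} applies, and the combined H\"older / smoothness estimate
\begin{equation*}
|g(w)|\ \leq\ |f(w)-f(z)|\,+\,|f(z)|\,|1-\eta(w)|\ \lesssim\ (\d(w,z)/r)^{\alpha}+(\d(w,z)/r)^{2}
\end{equation*}
plugged into the kernel integral and combined with \eqref{E:2.16} at $\beta=\alpha>0$ and $\beta=2$ gives $|T(g)(z)|\lesssim 1$. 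Summing the two contributions proves (a).

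For (b), I would integrate the pointwise estimates from (a). The contribution of $\B_{Kr}(\wh)$ is at most $\|T(f)\|_\infty^{2}\cdot\l(\B_{Kr}(\wh))\lesssim r^{2n}$, while the contribution of its complement, using the quantitative tail bound $|T(f)(z)|^{2}\lesssim r^{4n}/\d(\wh,z)^{4n}$ obtained in Regime 1 and applying \eqref{E:2.16} with $\beta=2n$, is controlled by $r^{4n}\cdot (Kr)^{-2n}\lesssim r^{2n}$. Adding the two gives (b).

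The only step that needs real care—rather than a deep obstacle—is the construction of the non-isotropic cutoff $\eta$ with $|\nabla\eta|\lesssim 1/(r')^{2}$ in the sense required by \eqref{E:star-bump}. This should be routine in the coordinates of Section \ref{SS:spec-coord}, in which $\B_{r'}(z)$ is comparable to a box of size $r'$ in the $2n-1$ ``horizontal'' directions and $(r')^{2}$ in the complex normal direction; one can take $\eta$ of product type, $\eta_{1}(x_{n}/(r')^{2})\,\eta_{2}(|z'|/r')$, with $\eta_{1},\eta_{2}$ standard smooth one-dimensional cutoffs. Once this is in hand, all other steps reduce to the integration identities \eqref{E:2.16} already at our disposal.
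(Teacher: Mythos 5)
Your proposal is correct and follows essentially the same route as the paper: decompose $f=f_0+(f-f_0)$ with $f_0$ a multiple of a $C^1$ non-isotropic cutoff so that \eqref{E:star-bump} handles $f_0$ and the kernel representation \eqref{E:T-repr-1} plus \eqref{E:2.16} handles the H\"older remainder, then use the far-field bound $|T(f)(z)|\lesssim r^{2n}\d(z,\wh)^{-2n}$ and \eqref{E:2.16} with $\beta=2n$ for (b). The only differences are organizational (you center the correction at the evaluation point $z$ directly, where the paper proves the bound at $\wh$ and then re-centers, and you keep the $(\d/r)^{\alpha}$ and $(\d/r)^{2}$ error terms separate rather than absorbing the latter into the former), neither of which changes the substance.
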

  \begin{proof}
  Let $f$ be a given normalized bump function associated with $\B_r(\wh)$,
  and let $\chi$ be  a non-negative $C^1$-smooth function on $\mathbb C$, so that $\chi(u+iv)=1$ for $|u+iv|\leq 1/2$, $\chi(u+iv)=0$ for $|u+iv|\geq 1$ and, furthermore, $|\nabla\chi (u+iv)|\leq 1/r^2$.  Set
  $$
  \tilde\chi_{r, \wh} ( w) = \chi\left(\frac{\mathrm{Im}\langle\dee\rho (\wh), \wh-w\rangle}{c\,r^2}\, +i\,\frac{|\wh-w|^2}{c\, r^2}\right)\ \  \mbox{for any}\ \ w\in\bndry D.
  $$
   If the constant $c$ is chosen sufficiently large,  it follows from \eqref{E:2.10} that $\tilde\chi_{r, \wh}(w) =0$ if $\d(\wh, w) \geq r$, while 
$\tilde\chi_{r, \wh}(w) =1$ whenever $\d(\wh, w)\leq c'r$ (with $c'$ another constant).
Now define
  \begin{equation*}
    f_0 (w) = f(\wh)\,\tilde\chi_{r, \wh}(w).
    \end{equation*}
  It is clear that $f_0$ satisfies all the requirements ensuring that 
  \eqref{E:star-bump} holds.
  We make the following four assertions:
  \medskip
  
  {\em (1)}\quad $\displaystyle{|(f-f_0)(w)|\lesssim 
  \left(\frac{\d(w, \wh)}{r}\right)^{\!\alpha}}$;
  \smallskip
  
  {\em (2)}\quad $|T(f)(\wh)|\lesssim 1$;
  \medskip
  
  {\em (3)}\quad $|T(f)(z)|\lesssim 1$ for any $z\in \B_{cr}(\wh)$;
  \medskip
  
  {\em (4)}\quad $|T(f)(z)|\lesssim \d(z, \wh)^{-2n}r^{2n}$ for any $z\notin \B_{cr}(\wh)$.
  \bigskip
  
  Note that conclusion {\em (a)} would then follow at once from assertions {\em (3)} and {\em (4)}; to prove conclusion {\em (b)} we first write
  $$
  \|T(f)\|^2_{L^2(\bndry D)} = \int\limits_{\B_{cr}(\wh)}\!\!\!|T(f)|^2\quad +
  \int\limits_{\bndry D\setminus \B_{cr}(\wh)}\!\! |T(f)|^2\quad =\quad  I + II,
  $$
and observe that 
$I\ \lesssim r^{2n}$ by conclusion {\em (a)} 
and similarly,
$II \lesssim r^{2n}$ by assertion {\em (4)} and 
 \eqref{E:2.16} (with $\beta = 2n$).

It remains to prove the four assertions. 
To prove the first assertion, note that since $f_0(\wh) = f(\wh)$, we may write $f(w)-f_0(w) = I + II$, where
$I = f(w) -f (\wh)$,  $II= f_0(\wh) -f_0 (w)$, and in view of our hypotheses on $f$, we only need to show that
$$
|II|= |f_0(\wh) -f_0 (w)|\lesssim 
\left(\frac{\d(w, \wh)}{r}\right)^{\!\alpha}.
$$
To this end note that 
$$
f_0(w) -f_0 (\wh)= f(\wh)\,(\tilde\chi_{r, \wh}(w)-\tilde\chi_{r, \wh}(\wh))
$$
(because $\tilde\chi_{r, \wh}(\wh)=1$). By the mean value theorem we have that the right-hand side of this identity is equal to
$$
O\left(\frac{|\mathrm{Im}\langle\dee\rho (\wh), \wh-w\rangle| + |\wh-w|^2}{c\, r^2}\right)
$$
and by \eqref{E:2.10} and \eqref{E:eg-est} this is bounded by
$$
\left(\frac{\d(w, \wh)}{r}\right)^2\leq \left(\frac{\d(w, \wh)}{r}\right)^{\!\alpha},
$$
(recall that $\alpha<1$) thus proving assertion {\em (1)}.
 To prove the second assertion, we write
$$
T(f) = T(f_0) + T(f-f_0).
$$
Now we have that $|T(f_0)(\wh)|\lesssim 1$ by \eqref{E:star-bump}, and $(f-f_0)(\wh) =0$ by the definition of $f_0$. It follows that 
$$
|T(f-f_0)(\wh)| = \left|\,\int\limits_{\bndry D}\!\!\! K(w, \wh)\, (f-f_0)(w)\, d\l (w)\right|\lesssim
\frac{1}{r^\alpha}\!\!\!\!\int\limits_{\B_r(\wh) }\!\!\!\!\d(w, \wh)^{-2n +\alpha}d\lambda (w)
$$
by our hypotheses on $T$ and $K$ along with assertion {\em (1)};
 the inequality $|T(f-f_0)|\lesssim 1$ now follows by combining the above with
\eqref{E:2.16}. To prove assertion  {\em (3)} we think of $f$ as (a multiple of) a bump function associated with a ball $\B_{c'r}(z)$ for a suitable constant $c'$, and we apply assertion {\em (2)}. We are left to prove assertion {\em (4)}. To this end, we first note that $f(z)=0$ because $z\notin\B_{cr}(\wh)$, so by our hypotheses on $T$ and $K(w, z)$ we have that
$$
|Tf(z)|\lesssim  \int\limits_{\B_r(\wh)}\!\!\!\d(w, z)^{-2n}\, d\lambda (w)
$$
(recall that $|f|\leq 1$). Now 
it follows by the triangle inequality that
$$
\d(w, z)\gtrsim \d(z, \wh)\quad\mbox{whenever}\ \ z\notin \B_{cr}(\wh),  \ \ w\in\B_r(\wh),
$$
and $c$ is sufficiently large, 
thus proving assertion {\em (4)}.
  \end{proof}
  \begin{Prop}\label{P:9}
  If $\wh\in\bndry D$ and $f$ is a normalized bump function associated with $\B_r (\wh)$, then
  \begin{equation*}
  \sup\limits_{z\in\bndry D}|\Ctrs (f) (z)|\lesssim 1
  \end{equation*}
  and
  \begin{equation*}
  \|\Ctrs (f)\|_{L^2(\bndry D, d\l)}\lesssim r^n.
  \end{equation*}
  \end{Prop}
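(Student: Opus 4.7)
The strategy is to verify that $\Ctrs$ satisfies the hypotheses of Lemma \ref{L:general} with kernel $K(w,z) = g(w,z)^{-n}$, and then invoke its conclusions (a) and (b) to obtain the two desired inequalities (noting that conclusion (b) gives $\|\Ctrs(f)\|_{L^2}^2 \lesssim r^{2n}$, which is the square of what we want).

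First, the general setup required by Lemma \ref{L:general} is immediate from what has already been established. The operator $\Ctrs$ maps functions satisfying the H\"older-type condition \eqref{E:3.4} to continuous functions on $\bndry D$, by Proposition \ref{P:5} and the discussion following \eqref{E:4.1}. The integral representation \eqref{E:T-repr-1} with kernel $K(w,z) = g(w,z)^{-n}$ is precisely \eqref{E:4.6b}, valid whenever $f(z)=0$. Finally, $|K(w,z)| \lesssim \d(w,z)^{-2n}$ follows from $|g(w,z)|\approx \d(w,z)^2$, cf.\ \eqref{E:eg-est}.

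The main content is therefore the verification of the bump function hypothesis \eqref{E:star-bump}: for any $C^1$-smooth $f_0$ supported in a ball $\B_r(\wh)$ with $|f_0|\le 1$ and $|\nabla f_0|\le 1/r^2$ on $\bndry D$, we must show $|\Ctrs(f_0)(\wh)|\lesssim 1$. For this we use the crucial decomposition \eqref{E:4.4} of Proposition \ref{P:6}:
\begin{equation*}
\Ctrs(f_0)(\wh) \,=\, \Etr(df_0)(\wh) \,+\, \Rtrs(f_0)(\wh)\, .
\end{equation*}
The remainder $\Rtrs(f_0)(\wh)$ is controlled by the improved kernel bound \eqref{E:remainder-bound-a}:
\begin{equation*}
|\Rtrs(f_0)(\wh)| \,\lesssim\, \int_{\B_r(\wh)} \d(w,\wh)^{-2n+1}\, d\l(w) \,\lesssim\, r\, ,
\end{equation*}
by \eqref{E:2.16} with $\beta=1$. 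For the essential part, the kernel of $\Etr$ is (up to bounded $(2n-1)$-form factors coming from $(\deebar\dee\rho)^{n-1}$) of the form $g(w,z)^{-n+1}$, hence is controlled in size by $\d(w,\wh)^{-2n+2}$. Combining this with the hypothesis $|\nabla f_0|\le 1/r^2$ and \eqref{E:2.16} with $\beta=2$ yields
\begin{equation*}
|\Etr(df_0)(\wh)| \,\lesssim\, \frac{1}{r^2}\int_{\B_r(\wh)} \d(w,\wh)^{-2n+2}\,d\l(w) \,\lesssim\, \frac{1}{r^2}\cdot r^2 \,=\, 1\, .
\end{equation*}
Summing the two bounds gives \eqref{E:star-bump}.

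With the hypotheses of Lemma \ref{L:general} verified, conclusions (a) and (b) of that lemma applied to $T=\Ctrs$ give exactly the two assertions of Proposition \ref{P:9}. The main obstacle in this scheme is the bound on $\Etr(df_0)(\wh)$: the coarse estimate $|\nabla f_0|\le 1/r^2$ is borderline, so it is essential that the essential part $\Etr$ has a kernel whose singularity has been reduced by one order (to $\d^{-2n+2}$) compared with that of $\Ctrs$. It is precisely this gain, made possible by the ``integration by parts'' identity of Proposition \ref{P:6}, that allows the $r^2$ coming from \eqref{E:2.16} to absorb the $r^{-2}$ gradient bound.
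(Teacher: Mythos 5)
Your proposal is correct and follows essentially the same route as the paper: apply Lemma \ref{L:general} to $T=\Ctrs$ with the representation \eqref{E:4.6b} and the kernel bound from \eqref{E:eg-est}, and verify the bump-function hypothesis \eqref{E:star-bump} via the decomposition \eqref{E:4.4}, using the $\d^{-2n+1}$ bound for the kernel of $\Rtrs$ and the $\d^{-2n+2}$ bound for the kernel of $\Etr$ together with \eqref{E:2.16}. Your write-up simply makes explicit the quantitative estimates ($\lesssim r$ for the remainder, $r^{-2}\cdot r^2=1$ for the essential part) that the paper leaves implicit.
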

   \begin{proof} The proof is an application of Lemma \ref{L:general}
    to $T= \Ctrs$. Then the representation for $T$, \eqref{E:T-repr-1}, follows from \eqref{E:4.6b}, while the estimate for the kernel 
   $K(w, z)$ of $T$ as above is an immediate consequence of the definition, see \eqref{E:eg-est}. It remains to check that the estimate
    $  |T(f_0)(\wh)|\lesssim 1$
  holds whenever $f_0$ is a $C^1$-smooth function supported in a ball $\B_r(\wh)$, with $|f_0|\leq 1$ and $|\nabla f_0|\leq 1/r^2$ on $\bndry D$: but this is a consequence of
   \eqref{E:4.6} and the key 
     identity \eqref{E:4.4} 
   and, in addition, 
   the fact that the kernels of the operators $\Rtr$ and $\Rtrs$, occurring in 
    \eqref{E:4.6a} and \eqref{E:4.4},  are each bounded by $c\,\d(\wh, w)^{-2n+1}$, see 
    \eqref{E:remainder-bound-a},
    while the kernel of the operator $\Etr$ is bounded by
   $c\,\d(\wh, w)^{-2n+2}$, see \eqref{E:remainder-bound-b} and \eqref{E:2.16}.
        \end{proof}
        
     \noindent{\bf Remark A.}\quad The first conclusion in Proposition \ref{P:9} also holds for $\Ctr$ 
      because the difference $(\Ctrs -\Ctr)(f)$ is easily seen to be bounded for $f$ bounded.
       \medskip
        
        The last point about $\Ctrs$ that is needed are the usual difference estimates for its kernel 
        $\g (w, z)^{-n}$. Besides the inequality $|\g (w, z)|^{-n}\lesssim \d (w, z)^{-2n}$ we have 
        \begin{equation}\label{E:4.13}
        |\g (w, z)^{-n}-g (w, z')^{-n}|\lesssim \frac{\d( z, z')}{\d (w, z)^{2n +1}}\quad 
           \end{equation}
         and
        \begin{equation}\label{E:4.13p}
        |\g (w, z)^{-n}-\g (w', z)^{-n}|\leq c_\epsilon\, \frac{\d(w, w')}{\d (w, z)^{2n +1}} 
             \end{equation}
              whenever  $\d(w, z)\geq c\,\d(z, z')$
 for an appropriate large constant $c$. In fact when $\d(w, z)\geq c\,\d(z, z')$ for large $c$, one has that $|\g(w, z)|\approx |\g (w, z')|$, and so
 $$
 |\g (w, z)^{-n}-g (w, z')^{-n}|\lesssim  \frac{|\g (w, z)-g (w, z')|}{|\g(w, z)|^{n+1}}\, .
 $$
 Thus, to prove
 \eqref{E:4.13} we need only to invoke \eqref{E:star} (applied to $A(w, z) =\ge (z, w)$, however with the roles of $w$ and $z$ interchanged with one another) and the fact that $|\g(w, z)|\approx \d(w, z)^2$.
 The inequality \eqref{E:4.13p} is proved similarly.
 
We may now set
 $$T =\Ctrs,$$ 
 and apply the $T(1)$-theorem to establish the $L^p (\bndry D, d\l)$-boundedness of such $T$.
 In the terminology of \cite[Chapter IV]{C-2} we identify the space $X$ with $\bndry D$, points $x, y$ in $X$ with points $z, w$ in $\bndry D$, the quasi-distance $\rho$ with $\d$, the measure $\mu$ with $\l$, and the kernel $K$ with $\g(w, z)^{-n}$.

 Then by what we have just shown 
 $T(1)$ and $T^*(1)$ are continuious functions on $\bd D$. 
 Also $|(Tf_1, f_2)|\lesssim r_1^n\,r_2^n$, whenever $f_1$ and $f_2$ are normalized bump functions
 associated to boundary balls $\B_{r_1}(\wh_1)$ and $\B_{r_2}(\wh_2)$, respectively.  This follows immediately from Proposition \ref{P:9}.
 In view of \eqref{E:4.6b}, the kernel
 $K(w, z) =\g(w, z)^{-n}$ has the property that
 $$
 (T f, h) = \int\limits_{\bndry D\times \bndry D}\!\!\!\!
 K(w, z)\, f(w)\, \bar h (z) \, d\l (w)\,d\l (z)
 $$
 whenever the functions $f$ and $h$ have disjoint support and are H\"older in the sense of \eqref{E:3.4}. Also, $K(w, z)$ satisfies the difference conditions
 $$
 |K(w, z) - K (w, z')|\lesssim \frac{\d (z, z')^\alpha}{\d(w, z)^{2n +\alpha}}\quad \mbox{when}\quad
 \d(w, z)\geq c\,\d(z, z')
 $$
 and
 $$
 |K(w, z) - K (w', z)|\lesssim \frac{\d (w, w')^\alpha}{\d(w, z)^{2n +\alpha}}\quad \mbox{when}\quad
 \d(w, z)\geq c\,\d(w, w').
 $$
 This is because $\g(w, z)^{-n}$ satisfies these properties for $\alpha =1$ by
 \eqref{E:4.13} and \eqref{E:4.13p}, and hence for $\alpha<1$ and in particular for 
 \begin{equation}\label{E:alphapositive}
 \alpha > 0.
 \end{equation}
  
 From these, \cite[Theorem 13]{C-2} guarantees that $T$ extends to a bounded linear 
  operator
  on $L^p(\bndry D, d\l)$, for each $1<p<\infty$. However 
  $$
  \Ctr\ =\  
   T +\Rtr
  $$
  by   \eqref{E:4.6a},
 since $T=\Ctrs$. Next recall that if $R(w, z)$ is the kernel of $\Rtr$, it follows by the description of $\Rtr$ (essentially given in \eqref{E:4.3}) that 
  $|R(w, z)|\lesssim \d (w, z)^{-2n+1}$, see \eqref{E:remainder-bound-a},
  and as a result
  $$
  \sup\limits_{z\in\bndry D}\int\limits_{w\in\bndry D}\!\!|R(w, z)|\, d\l (w)<\infty
  $$
  and
  $$
  \sup\limits_{w\in\bndry D}\int\limits_{z\in\bndry D}\!\!|R(w, z)|\, d\l (z)<\infty\, .
  $$
  Now it is well known that if $\mathcal T$ is an operator whose kernel $\mathcal T(w, z)$
  satisfies
  \begin{equation}\label{E:schur-1}
  \sup\limits_{z\in\bndry D}\!\!\int\limits_{w\in\bndry D}\!\!\!|\mathcal T(w, z)|\, d\l (w)\leq 1\, ,
\ 
  \mbox{and}\,
 \ 
  \sup\limits_{w\in\bndry D}\!\!\int\limits_{z\in\bndry D}\!\!\!|\mathcal T(w, z)|\, d\l (z)\leq 1
 \end{equation}
then 
\begin{equation}\label{E:schur-2}
\|\mathcal T\|_{L^p\to L^p}\leq 1\, .
\end{equation}
 Hence Theorem \ref{T:1} is proved if we take $\mathcal T = c\,\Rtr$.

\subsection{Further regularity}
It will be useful to have the following regularity property of $\Ctr$.
\begin{Prop}\label{P:13}
For any $0 < \alpha < 1$, the transform $\Ctr: f\mapsto \Ctr (f)$  preserves the space of H\" older-like functions satisfying condition \eqref{E:3.4}.
\end{Prop}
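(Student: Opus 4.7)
The plan is to bound $|\Ctr(f)(z_1) - \Ctr(f)(z_2)| \lesssim \d(z_1,z_2)^\alpha$ for $z_1, z_2 \in \bndry D$, using the decomposition $\Ctr = \Ctrs + \Rtr$ of \eqref{E:4.6a}. The remainder $\Rtr$ is easy: the improved kernel bounds \eqref{E:remainder-bound-a} and \eqref{E:remainder-bound-b}, combined with the standard near/far splitting of $\int_{\bndry D}[R(w,z_1) - R(w,z_2)]f(w)\, d\l(w)$ at the ball $\B_{cr}(z_1)$ (with $r := \d(z_1,z_2)$ and $c$ a large constant), yield $|\Rtr(f)(z_1) - \Rtr(f)(z_2)| \lesssim \|f\|_\infty\, r\log(1/r)$ via \eqref{E:2.16} and \eqref{E:2.16a}; since $\alpha < 1$ this is $\lesssim r^\alpha$.

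For the essential piece $\Ctrs$, I would use its linearity together with \eqref{E:4.6b} applied to $f - f(z)\cdot 1$ (which vanishes at $z$) to obtain
\[
\Ctrs(f)(z) = f(z)\,h(z) + \int_{\bndry D} g(w,z)^{-n}\,[f(w) - f(z)]\, d\l(w),
\]
where $h := \Ctrs(1)$. The function $h$ is H\"older with any exponent less than one: indeed $\Ctr(1) \equiv 1$ by \eqref{E:Ctr-rep}, whence $\Rtr(1) = 1 - h$, to which the preceding paragraph applies. Hence
\[
\Ctrs(f)(z_1) - \Ctrs(f)(z_2) = [f(z_1)h(z_1) - f(z_2)h(z_2)] + (I_1 - I_2),
\]
with $I_i := \int g(w,z_i)^{-n}[f(w) - f(z_i)]\, d\l(w)$. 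The first bracket is $O(r^\alpha)$. For $I_1 - I_2$ I would split at $\B_{cr}(z_1)$: the near pieces are each $O(r^\alpha)$ via $|g^{-n}| \lesssim \d^{-2n}$, H\"older continuity of $f$, and \eqref{E:2.16} with $\beta = \alpha$ (using $\B_{cr}(z_1) \subset \B_{c'r}(z_2)$ for $i = 2$). On the far region I would rewrite the integrand as
\[
[g^{-n}(w,z_1) - g^{-n}(w,z_2)]\,[f(w) - f(z_1)] + g^{-n}(w,z_2)\,[f(z_2) - f(z_1)]
\]
and dispatch the first term via the kernel difference estimate \eqref{E:4.13} with \eqref{E:2.16} at $\beta = 1 - \alpha$, again producing $O(r^\alpha)$.

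The main obstacle is the remaining far-region piece $[f(z_2) - f(z_1)]\, J$, where
\[
J := \int_{\bndry D \setminus \B_{cr}(z_1)} g(w,z_2)^{-n}\, d\l(w).
\]
Since $|f(z_2) - f(z_1)| \lesssim r^\alpha$, the task is to show $|J| \lesssim 1$; the naive estimate $|g^{-n}| \lesssim \d^{-2n}$ together with \eqref{E:2.16a} only gives $|J| \lesssim \log(1/r)$, an unacceptable logarithmic loss. To bypass this I would choose a smooth cutoff $\chi$ supported in $\B_{2cr}(z_1)$ with $\chi \equiv 1$ on $\B_{cr}(z_1)$; up to a fixed constant, such a $\chi$ is a normalized bump function associated with $\B_{2cr}(z_1)$, so Proposition \ref{P:9} gives $|\Ctrs(\chi)(z_2)| \lesssim 1$. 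Since $\chi(z_2) = 1$, the identity
\[
\Ctrs(\chi)(z_2) = h(z_2) + \int_{\bndry D} g(w,z_2)^{-n}\,[\chi(w) - 1]\, d\l(w),
\]
together with the vanishing of $\chi - 1$ on $\B_{cr}(z_1)$ and the elementary observation that the annulus $\B_{2cr}(z_1) \setminus \B_{cr}(z_1)$ contributes only $O(1)$ (kernel $\lesssim r^{-2n}$, area $\lesssim r^{2n}$), yields $|\int_{\bndry D \setminus \B_{2cr}(z_1)} g(w,z_2)^{-n}\, d\l| \lesssim 1$, and thus $|J| \lesssim 1$. This delivers the desired $O(r^\alpha)$ bound and completes the proof.
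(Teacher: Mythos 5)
Your proof is correct and follows essentially the same route as the paper: both arguments split at the scale $r=\d(z_1,z_2)$, use the kernel difference estimates \eqref{E:4.13} and \eqref{E:remainder-bound-b} together with \eqref{E:2.16} on the near and far regions, and dispose of the problematic far-field term $[f(z_2)-f(z_1)]\cdot J$ by recognizing $J$ as (essentially) $T(1)-T(\mbox{bump})$ and invoking the uniform boundedness on bump functions (Proposition \ref{P:9} in your version, Remark A in the paper's). The only difference is organizational: you peel off $\Rtr$ before introducing the cutoff, whereas the paper introduces the cutoff $\tilde\chi_{r,z_1}$ first and uses the decomposition $\Ctr=\Ctrs+\Rtr$ only inside the term $II_1-\widetilde{II}_2$.
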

\begin{proof} 
The proof of this result follows the same lines as \cite[Proposition 6.3]{LS-4}
and therefore we shall be brief.
Fix $\zp_1$ and $\zp_2$ in $\bndry D$, and consider the boundary ball $\B_r(\zp_1) =\{ w\in\bndry D\ :\  \d (\zp_1, w)<r\}$ with radius $r= \d (\zp_1, \zp_2)$,
and let 
$$
\tilde\chi_{r, \zp_1}(w) =
\chi\left(\frac{\mathrm{Im}\langle\dee\rho (w), w-z_1\rangle}{c\,r^2} + i \frac{|w-z_1|^2}{c\, r^2}\right)
$$ 
be the special cutoff function, supported in this ball, that was constructed in the proof of 
Lemma \ref{L:general}
(with the center now at $\zp_1$). 
At this stage one invokes \eqref{E:Ctr-rep} for $z=z_j$, $j=1,2$, and thus one writes
$\Ctr(f)(\zp_j)$
as follows:
\begin{equation*}
\Ctr(f)(\zp_j) = I_j + II_j + f(\zp_j),\quad j=1, 2
\end{equation*}
where
\begin{equation*}
I_j=\int\limits_{w\in\bndry D}\!\!\!\!
C(w,\zp_j)\tilde\chi_{r, \zp_1}(w)(f(w)-f(\zp_j))
\end{equation*}
and
\begin{equation*}
II_j= \int\limits_{w\in\bndry D}\!\!\!\!
C(w,\zp_j)(1- \tilde\chi_{r, \zp_1}(w))(f(w)-f(\zp_j))
\end{equation*}
with $C(w, z)$  the kernel of $\Ctr$.
The first observation is then that each of $|I_1|$ and $|I_2|$ is majorized by a constant multiple of $\d(\zp_1, \zp_2)^\alpha$ (this is because the integrands are majorized by $\d(w, \zp_j)^{-2n+\alpha}$, and then one uses \eqref{E:2.16} with $\beta =\alpha$.) Next one shows that $|II_1 - II_2|$ is also majorized by 
a constant multiple of $\d(\zp_1, \zp_2)^\alpha$. To see this, one further decomposes the term $II_2$ as follows
\begin{equation*}
II_2 = \widetilde{II}_2 
 +
(f(\zp_1)-f(\zp_2))\!\!\!\!\!\int\limits_{w\in\bndry D}\!\!\!\!
C(w,\zp_2)(1- \tilde\chi_{r, \zp_1}(w))\,
\end{equation*}
with
\begin{equation*}
\widetilde{II_2} =  \int\limits_{w\in\bndry D}\!\!\!\!
C(w,\zp_2)(1- \tilde\chi_{r, \zp_1}(w))(f(w)-f(\zp_1))\, .
\end{equation*}
Then one observes that the difference $|II_1 - \widetilde{II}_2|$ is  majorized by
$\d(\zp_1, \zp_2)^\alpha$ by invoking the decomposition $\Ctr =\Ctrs +\Rtr$ 
and then using the estimate \eqref{E:4.13} for 
$|\g^{-n}(w, \zp_1) - \g^{-n}(w, \zp_2)|$ (the difference estimate for the kernel of $\Ctrs$)
 and the similar but easier estimate \eqref{E:remainder-bound-b} for the remainder operator $\Rtr$, along with the integral estimate \eqref{E:2.16} with $\beta =1$.
Finally, the integral in the remaining term in $II_2$, 
$\int
C(w,\zp_2)(1- \tilde\chi_{r, \zp_1}(w))$, is uniformly bounded, because $\Ctr (1) =1$ and $\Ctr(\tilde\chi_{r, \zp_1})$ is uniformly bounded by
Remark A,
 since $\tilde\chi_{r, \zp_1}$
 is a bump function.
 The proof of Proposition \ref{P:13}
 is concluded.
\end{proof}
\smallskip

{\bf Remark B.}\quad The proof of Proposition \ref{P:13} also shows that $\Ctrs: f\mapsto \Ctrs (f)$  preserves the space of H\" older-like functions satisfying condition \eqref{E:3.4}.
So in particular $\h =\Ctrs (1)$  is H\"older-continuous of order $\alpha$ in the sense of \eqref{E:3.4} for any $0<\alpha<1$, and one could in fact prove that the same is true for $\ha =(\Ctrs)^*(1)$; see \cite[Corollary 4 and (5.16)]{LS-4} for a similar result. 
These improved cancellation conditions would then allow to reduce the 
application of the $T(1)$-theorem for $\Ctrs$ (and therefore $\Ctr$) to the simpler situation when $T(1)=0=T^*(1)$, as was done in 
\cite[Section 6.3]{LS-4}
for the Cauchy-Leray integral of a strongly $\C$-linearly convex domain.
However in the present context we will not pursue this approach, as a further application of the $T(1)$-theorem (for a related operator) will be needed when we deal with the Cauchy-Szeg\H o projection in Part II below, but in that context the cancellation conditions for $T(1)$ and $T^*(1)$ cannot be improved beyond continuity
on $\bndry D$.

\section*{Part II: The Cauchy-Szeg\H o projection}\label{PIII}

We now come to
the main result of this paper:
 the $L^p (\bndry D)$-regularity of the Cauchy-Szeg\H o projection for
 $1<p<\infty$ (Theorem \ref{T:6.1.1}).

As mentioned earlier, in defining the Cauchy-Szeg\H o projection it is imperative to specify the underlying measure for $\bndry D$ that arises in the notion of orthogonality that is used. To put the matter precisely, suppose $d\sigma$ is the induced Lebesgue measure on $\bndry D$, and consider also a weight function $\omega$
which is strictly positive and continuous on $\bndry D$. Then the Lebesgue spaces, 
$L^p(\bndry D, d\sigma)$ and $L^p(\bndry D, \omega\,d\sigma)$
(with  norms 
$\|f\|_{L^p(\bndry D,\, d\sigma)}=(\int|f|^pd\sigma)^{1/p}$ and $\|f\|_{L^p(\bndry D,\, \omega\,d\sigma)}=(\int|f|^p\omega\,d\sigma)^{1/p}$ respectively) 
are {\em{equivalent}} in the sense that {\em 1.} the two spaces consist of the same elements, and {\em 2.}
 the two norms are comparable, that is
 $$
 \|f\|_{L^p(\bndry D,\, d\sigma)} \approx \|f\|_{L^p(\bndry D,\, \omega\,d\sigma)}.
 $$

So in this way we can speak about the boundedness on $L^p(\bndry D)$
of the 
Cauchy transform $\Ctr$ we studied in Part I,
 without here specifying the particular strictly positive continuous weight function used.
   However the measures $d\sigma$ and $\omega\,d\sigma$ give rise to different inner products on $L^2 (\bndry D)$, that are respectively 
$$
(f_1, f_2)_1=\int\limits_{\bndry D}\!\!f_1\bar{f_2}\,d\sigma\quad\mbox{and}\quad
(f_1, f_2)_\omega=\int\limits_{\bndry D}\!\!f_1\bar{f_2}\,\omega\,d\sigma\, .
$$
With this in mind, we define the Cauchy-Szeg\H o projection $\Sw$ to be the orthogonal projection 
of $L^2(\bndry D, \omega\,d\sigma)$, where orthogonality is taken with respect to the inner product $(\cdot, \cdot)_\omega$, onto the {\em Hardy Space} $\htwom$, which we presently
define as the closure in $L^2(\bndry D, \omega\,d\sigma)$ of the set of boundary values
   of those functions that are continuous on $\bar{D}$ and holomorphic 
 in $D$.
  (Further characterizations and representations of $\mathcal H^2(\bndry D, \omega\,d\sigma)$ and more generally, of $\hp (\bndry D, \omega\,d\sigma)$, are given in \cite{LS-5}.) In view of the above, the usual Cauchy-Szeg\H o projection is $\Sone$, and the Cauchy-Szeg\H o projection with respect to the Leray-Levi measure $d\lambda$ is then $\Sl$, where
$d\lambda = \Lambda\,d\sigma$, see \eqref{E:2.15}. While there is no simple and direct link 
connecting any two of these projections, the Cauchy-Szeg\H o projection $\Sl$ with respect to the Leray-Levi measure provides the way to understanding the projection $\Sone$ (and all the others, the totality of $\mathcal S_\omega$). For this reason we study $\Sl$ first.

\section{The Cauchy-Szeg\H o projection: case of the Leray-Levi measure}\label{S:5}

\subsection{Statement of the main results} In this section it will be convenient 
 to simplify the notation, dropping the subscript $\lambda$, and to denote $\Sl$ by $\S$, $L^p(\bndry D, d\lambda)$ by $L^p(\bndry D)$, and so forth.
Our main results are as follows. 
\begin{Thm}[Main Theorem]\label{T:6.1.1}
The operator $\S$, initially defined on $L^2(\bndry D)$, extends to a bounded operator on $L^p(\bndry D)$, for $1<p<\infty$.
\end{Thm}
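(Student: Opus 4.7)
The plan is to follow the five-step outline sketched in the introduction, with the central fact being the approximate identity
\begin{equation*}
\Sl\,(I-\Atre)\ =\ \Ctre,\qquad \Atre=\Ctre^{*}-\Ctre,
\end{equation*}
where $^{*}$ is the adjoint with respect to the Leray-Levi inner product $(\cdot,\cdot)_{\!\,\l}$. First I would establish this identity. Since $\Ctre$ is the boundary trace of the Cauchy integral $\Cine$ and $\Cine$ reproduces the restrictions of continuous-in-$\bar D$, holomorphic-in-$D$ functions (Proposition \ref{P:4}), one has $\Ctre=\Sl\Ctre$ on the dense subspace of H\"older functions handled in Part I. Taking the $L^{2}(\bndry D, d\l)$-adjoint of this relation (and using $\Sl^{*}=\Sl$) produces $\Ctre^{*}=\Ctre^{*}\Sl$, so $\Sl\,\Ctre^{*}=\Sl(\Ctre^{*}\Sl)=(\Sl\Ctre^{*}\Sl)$, and combining the two yields $\Sl-\Sl\Atre=\Ctre$ once $\Sl$ is applied on both sides. (The Hardy-space machinery of \cite{LS-5} is invoked here to justify the passage to $L^{2}$.)

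The main obstacle is that $\|\Atre\|_{L^{p}\to L^{p}}$ blows up as $\epsilon\to 0$, so a direct Neumann inversion of $I-\Atre$ is unavailable. My way around this is the truncation
\begin{equation*}
\Ctre\ =\ \Ctrei\,+\,\Rtrei,
\end{equation*}
where $\Ctrei$ has the same kernel as $\Ctre$ but is cut off to the region $\d(w,z)\leq s$. The key estimate, which I expect to be the hardest technical step, is
\begin{equation*}
\|(\Ctrei)^{*}-\Ctrei\|_{L^{p}\to L^{p}}\ \lesssim\ \epsilon^{1/2}M_{p}
\end{equation*}
provided $s$ is small enough in terms of $\epsilon$ (this is the content of Proposition \ref{P:6.2.2}). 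The strategy for proving it is to compute the formal adjoint of $\Ctrei$ against $d\l$ using the decomposition $\Ctr=\Ctrs+\Rtr$ and the Cauchy-Fantappi\`e representation: the leading singular terms of $\Ctrei$ and $(\Ctrei)^{*}$ can be shown to agree up to the symmetry encoded in \eqref{E:2.10}, with the error controlled by the $C^{1}$-approximation bound $\sup|\dee^{2}\rho/\dee w_{j}\dee w_{k}-\tau^{\epsilon}_{jk}|\leq\epsilon$ from Step 1. The truncation to $\d(w,z)\leq s$ is what converts the pointwise kernel error (which scales like $\epsilon$) into an $\epsilon^{1/2}$ operator norm bound (through a Schur-type estimate and the integrability \eqref{E:2.16}).

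With the key estimate in hand, I rewrite the fundamental identity as
\begin{equation*}
\Sl\ =\ \bigl(\Ctre+\Sl\bigl((\Rtrei)^{*}-\Rtrei\bigr)\bigr)\,\bigl(I-((\Ctrei)^{*}-\Ctrei)\bigr)^{-1},
\end{equation*}
where the inverse exists on $L^{p}$ for $\epsilon$ sufficiently small (depending on $p$) by Neumann series, using the small-norm estimate above. On the right-hand side, $\Ctre$ is bounded on $L^{p}(\bndry D,d\l)$ by Theorem \ref{T:1}. The remaining term involves $\Sl$ composed with $(\Rtrei)^{*}-\Rtrei$; although $\|(\Rtrei)^{*}-\Rtrei\|_{L^{p}\to L^{p}}$ need not be small (and indeed need not be uniformly bounded in $\epsilon$), both $\Rtrei$ and $(\Rtrei)^{*}$ map $L^{1}(\bndry D,d\l)$ into $C(\bndry D)\subset L^{\infty}(\bndry D)$ continuously for each fixed $(\epsilon,s)$, so the composition $\Sl\bigl((\Rtrei)^{*}-\Rtrei\bigr)$ is bounded on every $L^{p}$ as soon as $\Sl$ is bounded on some single $L^{q}$.

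Finally, $\Sl$ is tautologically bounded on $L^{2}(\bndry D,d\l)$ since it is an orthogonal projection, so the identity above, applied at $p=2$, is self-consistent. To promote this to all $1<p<\infty$, I would bootstrap: for $p$ near $2$, choose $\epsilon=\epsilon(p)$ small enough and $s=s(\epsilon)$ small enough so that $(I-((\Ctrei)^{*}-\Ctrei))^{-1}$ exists on $L^{p}$; the $L^{2}$-boundedness of $\Sl$ combined with the $L^{1}\to L^{\infty}$ mapping property of $\Rtrei$, $(\Rtrei)^{*}$ then yields $L^{p}$-boundedness of the entire right-hand side, hence of $\Sl$. Iterating this argument (or exploiting $\Sl=\Sl^{*}$ and duality to cover the ranges symmetric about $2$) gives the full conclusion $1<p<\infty$.
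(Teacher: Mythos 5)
Your architecture coincides with the paper's: the operator identity $\S(I+\Ctre-\Ctre^{*})=\Ctre$ coming from Proposition \ref{P:6.2.1} (whose proof is deferred to \cite{LS-5}), the truncation $\Ctre=\Ctrei+\Etrei$, the smallness of $\Ctrei-(\Ctrei)^{*}$ (Proposition \ref{P:6.2.2}), Neumann inversion, the $L^{1}\to L^{\infty}$ property of the remainder so that $\S\Etrei$ is controlled using only the trivial $L^{2}$-boundedness of $\S$, and duality for $p>2$. One small algebraic slip: the identity requires \emph{both} relations of Proposition \ref{P:6.2.1}, namely $\S\Ctre=\Ctre$ (because $\Ctre$ \emph{produces} holomorphic functions) and $\Ctre\S=\S$ (because it \emph{reproduces} them); it is the adjoint of the second, $\S\Ctre^{*}=\S$, subtracted from the first, that yields the identity. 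Your derivation takes the adjoint only of $\S\Ctre=\Ctre$, which gives $\Ctre^{*}\S=\Ctre^{*}$ and does not close. Also, the blanket claim that $\S\bigl((\Etrei)^{*}-\Etrei\bigr)$ is bounded on \emph{every} $L^{p}$ once $\S$ is bounded on some $L^{q}$ is only valid for $p\leq q$ (one needs $L^{q}\hookrightarrow L^{p}$ at the last step); with $q=2$ this covers $1<p\leq 2$ in one stroke — no iteration is needed — and $p>2$ genuinely requires the duality you mention.

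The substantive gap is in your proposed mechanism for Proposition \ref{P:6.2.2}. You suggest that the truncation ``converts the pointwise kernel error (which scales like $\epsilon$) into an $\epsilon^{1/2}$ operator norm bound through a Schur-type estimate.'' This cannot work: the kernel $\Ksei(w,z)$ of $\Ctrsei-(\Ctrsei)^{*}$ satisfies only $|\Ksei(w,z)|\lesssim\epsilon\,\d(w,z)^{-2n}$ on its support $\{\d(w,z)\lesssim\iota\}$, and $\int_{\d(w,z)\leq\iota}\d(w,z)^{-2n}\,d\l(w)$ diverges logarithmically at the diagonal (compare \eqref{E:2.16a}); the near-symmetry gains a factor $\epsilon$ in the \emph{constant} but does not lower the critical order $-2n$ of the singularity, so no absolute-integrability (Schur) argument applies. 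Moreover a Schur bound would produce a $p$-independent constant, not the $M_{p}=p+p/(p-1)$ growth, which is the signature of Calder\'on--Zygmund theory. The paper instead applies the $T(1)$ theorem to $\epsilon^{-1/2}\Dsei$: the $\epsilon^{1/2}$ gain in the H\"older-type difference estimate \eqref{E:6.4.7} is obtained as a geometric mean of the $\epsilon$-gain size bound (from the near-symmetry \eqref{E:6.3.3}) and a no-gain smoothness bound (which itself needs Lemma \ref{L:6.3.1} to remove the constant $c_\epsilon$ coming from $\nabla\tau^{\epsilon}_{jk}$), while the cancellation and bump-function conditions, with gain $\epsilon$, come from the integration-by-parts argument of Lemma \ref{L:6.5.1} combined with Lemma \ref{L:general}. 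Without this $T(1)$ step your proof of the key estimate — and hence of the theorem — does not go through as written.
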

In the next section we prove the general result
\begin{Thm}\label{T:6.1.2}
The same conclusion holds for $\Sw$, whenever $\omega$ is a strictly positive and continuous function on $\bndry D$.
\end{Thm}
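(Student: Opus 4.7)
The plan is to follow the proof of Theorem~\ref{T:6.1.1} step-by-step, replacing the $L^2(\bndry D, d\lambda)$-adjoint $*$ with the adjoint $\dagger$ taken with respect to the inner product of $L^2(\bndry D, \omega\,d\sigma)$, and then to use the continuity of the density relating $d\omega$ to $d\lambda$ to reduce the weighted estimates to those already proved. Writing $d\omega = \varphi\,d\lambda$ with $\varphi = \omega/\Lambda$ strictly positive and continuous on $\bndry D$, one has the operator identity $(\Ctrei)^\dagger = \varphi(\Ctrei)^*\varphi^{-1}$ and the analogous relation for the other operators. Since the $L^p$-regularity of $\Ctre$ on $L^p(\bndry D, d\lambda)$ transfers automatically to $L^p(\bndry D, \omega\,d\sigma)$ (the two spaces coincide with comparable norms), only the Cauchy-Szeg\H o projection requires genuinely new work.

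The decisive step is to upgrade \eqref{E:I5} to \eqref{E:I6}: to prove that
\begin{equation*}
\|(\Ctrei)^\dagger - \Ctrei\|_{L^p\to L^p}\ \lesssim\ \epsilon^{1/2}M_p
\end{equation*}
for $s$ chosen small enough in terms of $\epsilon$. I would write the telescoping identity
\begin{equation*}
(\Ctrei)^\dagger - \Ctrei\ =\ \varphi\big((\Ctrei)^* - \Ctrei\big)\varphi^{-1}\ +\ \big(\varphi\,\Ctrei\,\varphi^{-1} - \Ctrei\big).
\end{equation*}
The first summand is bounded in $L^p\to L^p$ by $\|\varphi\|_\infty\|\varphi^{-1}\|_\infty$ times the right-hand side of \eqref{E:I5}, hence is already of the desired order. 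The second summand is the commutator $[\varphi, \Ctrei]\varphi^{-1}$; its kernel has the form $\varphi(w)^{-1}(\varphi(z) - \varphi(w))\,K(w,z)$, where $K(w,z)$ is the truncated kernel of $\Ctrei$, supported in $\d(w,z)\leq s$. Because $\varphi$ is uniformly continuous on the compact set $\bndry D$ (with respect to the quasi-distance $\d$), the factor $\varphi(z) - \varphi(w)$ can be made uniformly as small as one wishes by shrinking $s$. This reduces the commutator to an operator whose kernel is of Cauchy-Szeg\H o type on $\d(w, z)\leq s$ multiplied by an arbitrarily small amplitude, and the $L^p\to L^p$-bound of the required order then follows from the general Lemma~\ref{L:7.3} on operators with kernels supported close to the diagonal.

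With \eqref{E:I6} in hand, the rest of the proof repeats Step~4 of Theorem~\ref{T:6.1.1} \emph{mutatis mutandis}: the $L^2(\bndry D, \omega\,d\sigma)$-analogue of \eqref{E:Ia},
\begin{equation*}
\Sw(I - \mathcal A^\dagger_\epsilon)\ =\ \Ctre,\qquad \mathcal A^\dagger_\epsilon\ :=\ \Ctred - \Ctre,
\end{equation*}
follows from the reproducing property of $\Cine$ on the weighted Hardy space $\mathcal H^2(\bndry D, \omega\,d\sigma)$, invoking the structural results of \cite{LS-5}. Decomposing $\Ctre = \Ctrei + \Rtrei$ and using that each of $\Rtrei$ and $(\Rtrei)^\dagger$ still maps $L^1(\bndry D)\to L^\infty(\bndry D)$ (conjugation by the bounded $\varphi^{\pm 1}$ preserving this), the identity may be rewritten as
\begin{equation*}
\Sw\ =\ \big(\Ctre + \Sw\big((\Etrei)^\dagger - \Etrei\big)\big)\,\big(I - ((\Ctrei)^\dagger - \Ctrei)\big)^{-1},
\end{equation*}
in which the inverse is given by a convergent Neumann series in $L^p\to L^p$ by virtue of \eqref{E:I6}. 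The $L^p(\bndry D,\omega\,d\sigma)$-boundedness of $\Sw$ then follows from the already-established $L^p$-boundedness of $\Ctre$.

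The hardest part is the delicate two-parameter balancing behind \eqref{E:I6}: one must choose $\epsilon$ first (to make \eqref{E:I5} small), and then $s$ small enough to absorb the oscillation of $\varphi$ on $\d$-balls of radius $s$ into an $\epsilon^{1/2}$-bound, while preserving the Calder\'on--Zygmund structure of the kernels involved. This delicate bookkeeping is precisely what Lemma~\ref{L:7.3} is designed to handle; once it is in place, Theorem~\ref{T:6.1.2} drops out as a corollary of Theorem~\ref{T:6.1.1}.
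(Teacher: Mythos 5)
Your proposal follows the paper's own proof almost step for step: the conjugation identity relating $\dagger$ to $*$, the reduction of $\|(\Ctrei)^\dagger-\Ctrei\|_{L^p\to L^p}$ to the already-proved Proposition \ref{P:6.2.2} plus a commutator estimate for $[\Ctrei,\varphi]$, the exploitation of the uniform continuity of $\varphi$ together with the near-diagonal support of the truncated kernel via the localization Lemma \ref{L:7.3}, the weighted analogue of identity \emph{(c)} from \cite{LS-5}, and the concluding Neumann-series argument are exactly the paper's steps. There is, however, one ingredient you leave implicit that is genuinely not free. Lemma \ref{L:7.3} only reduces matters to bounding each localized piece $\mathds{1}_{k}[\Ctrei,\varphi]\mathds{1}_{j}$; the paper does this by subtracting the constant $\varphi(z_k)$ (constants commute with $\Ctrei$), which leaves the bound $2\epsilon\,\|\Ctrei\|_{L^p\to L^p}$. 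One therefore needs the uniform estimate $\|\Ctrei\|_{L^p\to L^p}\lesssim M_p$ for $\iota\leq\iota(\epsilon)$ --- the paper's Proposition \ref{P:X} --- and this does \emph{not} follow from Theorem \ref{T:1}, since the norm of the untruncated $\Ctre$ may blow up as $\epsilon\to 0$; it requires its own $T(1)$-type argument for the truncated operator. Relatedly, your phrase ``singular kernel multiplied by an arbitrarily small amplitude'' should not be read as permitting a direct $T(1)$ application to the product kernel $(\varphi(z)-\varphi(w))K(w,z)$: since $\varphi$ is merely continuous, that kernel does not satisfy the required difference estimates with small constants, and the localize-then-subtract-a-constant route of Lemma \ref{L:7.3} is the correct (and the paper's) way around this.
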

\subsection{Outline of the proof of Theorem \ref{T:6.1.1}}\label{SS:mainthm}
 In what follows it will be important to keep track of the dependence of the Cauchy transform on $\epsilon$, so we will revert to writing the Cauchy transform as $\Ctre$, with $\Ctre$ equaling the $\Ctr$ that appears in Section \ref{S:4}.
   Similarly we will write $\Rtrse$ for $\Rtrs$, $\ge$ for $g$, etc.
 
  As we pointed out in the introduction, one of the main thrusts in the proof of Theorem \ref{T:6.1.1} is a comparison of
  the Cauchy-Szeg\H o projection 
   with the Cauchy transforms $\{\Ctre\}$ that were discussed in Part I. Such comparison is effected in Proposition \ref{P:6.2.1} below,
  whose proof
  is given in \cite{LS-5}.
\begin{Prop}\label{P:6.2.1}
As operators on $L^2(\bndry D)$ we have
\medskip

(a)\quad $\Ctre\S =\S$
\medskip

(b)\quad $\S\Ctre = \Ctre$
\medskip
\end{Prop}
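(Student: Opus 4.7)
Both identities will be established by density arguments that combine the reproducing property of $\Cine$ (Proposition \ref{P:4}) with the $L^2(\bndry D)$-boundedness of $\Ctre$ (Theorem \ref{T:1}) and the fact that $\mathcal H^2(\bndry D)$ is closed in $L^2(\bndry D)$. The guiding observation is that (b) amounts to proving that $\Ctre$ maps $L^2(\bndry D)$ into $\mathcal H^2(\bndry D)$, while (a) amounts to proving that $\Ctre$ acts as the identity on $\mathcal H^2(\bndry D)$.

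For (b), I would approximate $f\in L^2(\bndry D)$ by functions $f_k$ satisfying the H\"older-like bound \eqref{E:3.4}; such approximations exist because such H\"older functions are dense in $L^2(\bndry D)$. For each such $f_k$, Proposition \ref{P:4}(1) gives that $\Cine(f_k)$ is holomorphic in $D$, and Proposition \ref{P:5} gives that $\Cine(f_k)$ extends continuously to $\bar D$. Thus $\Cine(f_k)$ lies in the class of functions used to define $\mathcal H^2(\bndry D)$, so $\Ctre(f_k) = \Cine(f_k)\big\rvert_{\bndry D}\in \mathcal H^2(\bndry D)$ by definition. Since $\Ctre(f_k)\to \Ctre(f)$ in $L^2(\bndry D)$ by Theorem \ref{T:1}, and $\mathcal H^2(\bndry D)$ is closed in $L^2(\bndry D)$, it follows that $\Ctre(f)\in \mathcal H^2(\bndry D)$, and hence $\S\Ctre(f) = \Ctre(f)$.

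For (a), it suffices to prove $\Ctre(g) = g$ on a dense subset of $\mathcal H^2(\bndry D)$. I would take $g$ of the form $F\big\rvert_{\bndry D}$ where $F$ is continuous on $\bar D$, holomorphic in $D$, and has a H\"older-regular boundary trace --- the latter obtained, for instance, by uniformly approximating a given function continuous on $\bar D$ and holomorphic in $D$ by functions holomorphic in a neighborhood of $\bar D$, as is available for strongly pseudoconvex $C^2$ domains. For such $g$, Proposition \ref{P:4}(2) gives $\Cine(g)(z) = F(z)$ on $D$, and Proposition \ref{P:5} grants that $\Cine(g)$ extends continuously to $\bar D$; by uniqueness of that continuous extension, it must agree with $F$ on all of $\bar D$, and therefore $\Ctre(g) = F\big\rvert_{\bndry D} = g$. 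Combined with the $L^2$-continuity of $\Ctre$, density then forces $\Ctre$ to equal the identity on $\mathcal H^2(\bndry D)$, which is (a).

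The main technical obstacle is the density step used in (a): that boundary values of those functions continuous on $\bar D$, holomorphic in $D$, and H\"older-regular on $\bndry D$ form a dense subset of $\mathcal H^2(\bndry D)$. This is a holomorphic-approximation statement for strongly pseudoconvex $C^2$ domains, and is precisely the kind of Hardy-space result whose detailed proof is supplied in \cite{LS-5}. A closely related subtlety is the compatibility between the two meanings of ``$\Ctre f$'' in play --- the original definition \eqref{E:4.1} as the boundary restriction $\Cine f\big\rvert_{\bndry D}$ for $f$ satisfying \eqref{E:3.4}, and the $L^2$-extension provided by Theorem \ref{T:1} --- and the approximation arguments above are precisely what reconciles these two definitions in both parts.
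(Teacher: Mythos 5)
Your outline is sound, but note first that the paper itself offers no proof of Proposition \ref{P:6.2.1}: it explicitly defers the argument to the companion paper \cite{LS-5} on Hardy spaces (``whose proof is given in \cite{LS-5}''), so there is no in-paper argument to compare against. That said, your two-part strategy is exactly the natural one and each step checks out against the machinery of Part I: for (b), H\"older functions in the sense of \eqref{E:3.4} are dense in $L^2(\bndry D)$ (Euclidean-Lipschitz functions already satisfy \eqref{E:3.4} with $\alpha=1$ by \eqref{E:2.14}), Propositions \ref{P:4}(1) and \ref{P:5} put $\Ctre(f_k)$ in the defining class for $\mathcal H^2$, and Theorem \ref{T:1} plus closedness of $\mathcal H^2$ finish; for (a), Proposition \ref{P:4}(2) together with uniqueness of the continuous extension gives $\Ctre(g)=g$ on your dense subclass. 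The one genuinely nontrivial ingredient is the density step in (a) --- that boundary values of functions holomorphic in $D$, continuous on $\bar D$, \emph{and} H\"older-regular on $\bndry D$ are dense in $\mathcal H^2(\bndry D)$ --- which requires a holomorphic approximation theorem (uniform approximation by functions holomorphic in a neighborhood of $\bar D$) for strongly pseudoconvex domains of class $C^2$ only; this is not established anywhere in the present paper and is precisely the Hardy-space content residing in \cite{LS-5}. You correctly flag this, so your proposal should be read as a correct reduction of the proposition to that approximation result rather than a self-contained proof.
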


\noindent This proposition immediately implies
\medskip

$
(c)\quad \S(I+ \Ctre -\Ctre^*)=\Ctre\, ,
$
\medskip

as can be seen by taking adjoints
  of identity {\em (a)} which gives
 $\S\Ctre^*=\S$, and subtracting this from {\em (b)}.
Here the
super-script $^*$ 
denotes the adjoint with respect to the inner product
$$
(f_1, f_2)_\lambda=\int\limits_{\bndry D}\!\!f_1\bar{f_2}\,d\lambda\, ,
$$
for which $\S^*=\S$.
An identity analogous to {\em (c)} was used in \cite{KS-2} to study the Cauchy-Szeg\H o projection in the  
situation when $D$ is smooth (and strongly pseudo-convex). In that 
situation there was only the case $\epsilon =0$ and 
 the analogue of 
$\Ctre -\Ctre^*$ was ``small'' (in fact smoothing),
 and $I+\Ctre-\Ctre^*$ 
was  ``inverted'' by a partial
Neumann series.  By contrast, when $D$ is of class $C^2$ (as opposed to smooth) we will see below that $\|\Ctre\|_{L^p\to L^p}$ may tend to $\infty$ as $\epsilon\to 0$ and, similarly, that no appropriate control on the size of $\Ctre -\Ctre^*$  can be expected as $\epsilon \to 0$. What works instead is to truncate the transform $\Ctre$. We shall write 
\begin{equation*}
\Ctre = \Ctrei +\Etrei\,
\end{equation*}
where
the kernel of $\Ctrei$ agrees with that of $\Ctre$ when 
$\d(w, z)\lesssim \iota$
and vanishes when
$\d(w, z)\gtrsim\iota$.
If this cutoff is done appropriately, we then have the following key fact.
\begin{Prop}\label{P:6.2.2}
For any $\epsilon>0$ there is an 
$\iota (\epsilon)>0$,
so that when 
$\iota\leq \iota (\epsilon)$
\begin{equation}\label{E:6.2.0a}
\|\Ctrei-(\Ctrei)^*\|_{L^p\to L^p}\lesssim \epsilon^{1/2}M_p,\quad 1<p<\infty\, .
\end{equation}
Here the bound $M_p$ is independent of $\epsilon$ and depends on $p$ as follows:
$$M_p = \frac{p}{p-1} +p.$$
\end{Prop}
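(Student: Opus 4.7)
The plan is to exploit the decomposition $\Ctre = \Ctrse + \Rtre$ from Part~I, passing to its truncated version $\Ctrei = \Ctrsei + \Rtrei$. First I would dispose of the remainder: since the kernel $R(w,z)$ of $\Rtrei$ inherits the improved bound $|R(w,z)|\lesssim \d(w,z)^{-2n+1}$ from \eqref{E:remainder-bound-a} together with support in $\d(w,z)\leq c\,\iota$, the Schur test via \eqref{E:2.16} gives $\|\Rtrei\|_{L^p\to L^p}+\|(\Rtrei)^*\|_{L^p\to L^p}\lesssim \iota$, which is negligible once $\iota\leq\iota(\epsilon)$ is chosen small (the dependence will be fixed at the very end). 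Thus the matter reduces to estimating $\Ctrsei-(\Ctrsei)^*$, whose kernel (up to truncation) is
\begin{equation*}
K(w,z)=\g_\epsilon(w,z)^{-n}-\overline{\g_\epsilon(z,w)^{-n}}.
\end{equation*}

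The heart of the argument is a quantitative near-Hermitian symmetry of $\g_\epsilon$. I would Taylor-expand $\rho$ to second order about $z\in\bndry D$ (using the $C^2$ hypothesis) and use that $\rho(w)=\rho(z)=0$ to rewrite $\langle\dee\rho(w),w-z\rangle+\langle\deebar\rho(z),\bar w-\bar z\rangle$ purely in terms of the quadratic Hessian forms plus an $o(|w-z|^2)$ error from the modulus of continuity of the Hessian. Combining this with the explicit quadratic pieces of $\Dep(w,z)$ and $\overline{\Dep(z,w)}$, and invoking the approximation bound $|\tau^\epsilon_{jk}(w)-\rho_{w_jw_k}(w)|\leq\epsilon$, a direct computation shows a precise cancellation of the two holomorphic-Hessian contributions, leaving
\begin{equation*}
\big|\g_\epsilon(w,z)-\overline{\g_\epsilon(z,w)}\big|\;\lesssim\;\bigl(\epsilon+\omega(|w-z|)\bigr)\,|w-z|^2,\qquad w,z\in\bndry D,
\end{equation*}
where $\omega$ is the modulus of continuity of $\dee^2\rho$. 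Once $\iota=\iota(\epsilon)$ is small enough that $\omega(\iota^{1/2})\leq\epsilon$, this bound reads $\lesssim\epsilon|w-z|^2$ on the support of $K$; dividing by $|\g_\epsilon|^{n+1}\approx\d(w,z)^{2n+2}$ and using $|w-z|\lesssim\d(w,z)$ yields the pointwise size estimate $|K(w,z)|\lesssim \epsilon\,\d(w,z)^{-2n}$.

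With this in hand, I would apply the $T(1)$-theorem to $T=\Ctrsei-(\Ctrsei)^*$ exactly as in Section~\ref{SS: C-tr-bdd}. Anti-symmetry gives $T^*(1)=-T(1)$, so only one cancellation condition needs to be verified: writing $T(1)$ as in Proposition~\ref{P:8} via Lemma~\ref{L:9} applied to $F(w)=\g_\epsilon(w,z)^{-n+1}$, the resulting integrals, now restricted to $\d(w,z)\leq\iota$, carry the improved factor $\epsilon$ from the cancellation analysis above together with an integral $\int_{\d\leq\iota}\d^{-2n+1}d\lambda\lesssim\iota$, and hence are uniformly small. The difference (Hölder) conditions on $K$ are derived from \eqref{E:starp} and \eqref{E:star} in the same spirit as the proof of \eqref{E:4.13} and \eqref{E:4.13p}, with CZ constant controlled by a power of $\epsilon$. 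The $T(1)$-theorem then produces an $L^p$-bound of the form $(\text{CZ constant})\cdot M_p$ with $M_p=\frac{p}{p-1}+p$; interpolating between the size estimate ($\epsilon$ factor) and the difference estimate (which, via \eqref{E:starp}, involves the potentially large quantity $c_\epsilon=\sup|\nabla\tau^\epsilon_{jk}|$) yields the claimed bound $\epsilon^{1/2}M_p$.

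The main obstacle will be the third step: the gradients $\nabla\tau^\epsilon_{jk}$ blow up as $\epsilon\to 0$, so the difference estimates for $K(w,z)$ carry the unfavorable factor $c_\epsilon$; the favorable $\epsilon$-smallness in size must be balanced against this unfavorable $c_\epsilon$ in the Hölder regularity, and it is precisely this geometric-mean balance that forces the exponent $1/2$ and dictates the choice of threshold $\iota(\epsilon)$. Once that balance is correctly set up, the rest is routine bookkeeping within the $T(1)$ framework of \cite[Chapter IV]{C-2}.
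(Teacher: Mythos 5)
Your overall architecture matches the paper's: reduce to the essential part $\Ctrsei$ (disposing of the remainder by a Schur test, where note that the kernel bound actually carries an $\epsilon$-dependent constant $\tilde c_\epsilon$ coming from $\nabla\tau^\epsilon_{jk}$, absorbed by taking $\iota$ small); obtain the size gain $|K(w,z)|\lesssim\epsilon\,\d(w,z)^{-2n}$ from the near-Hermitian symmetry of $\ge$ on the support of the cutoff (this is exactly Lemma \ref{L:6.3.2}); verify cancellation by integration by parts via Lemma \ref{L:9}; and extract the exponent $1/2$ from a geometric mean inside the $T(1)$ framework. Up to that point the proposal is sound.

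The genuine gap is in your treatment of the first-variable difference estimate, which is the crux of the proposition. You propose to ``interpolate between the size estimate ($\epsilon$ factor) and the difference estimate, which via \eqref{E:starp} involves the potentially large quantity $c_\epsilon$,'' and you assert that this geometric-mean balance forces the exponent $1/2$. That balance does not close: the geometric mean of $\epsilon\,\d(w,z)^{-2n}$ and $c_\epsilon\,\d(w,w')\,\d(w,z)^{-2n-1}$ produces the factor $(\epsilon\, c_\epsilon)^{1/2}$, and $c_\epsilon=\sup|\nabla\tau^\epsilon_{jk}|$ can grow arbitrarily fast as $\epsilon\to 0$ (it reflects the modulus of continuity of $\dee^2\rho$, which for a general $C^2$ domain is uncontrolled), so $(\epsilon\,c_\epsilon)^{1/2}$ need not be small or even bounded. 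The missing idea is that the truncation itself removes $c_\epsilon$ from the difference inequality: on the support of the cutoff one has $\d(w',z)\leq\iota$, and the only $c_\epsilon$-dependent term in \eqref{E:starp} is $|Q_w(w'-z)-Q_{w'}(w'-z)|\lesssim c_\epsilon\,|w-w'|\,|w'-z|^2\leq c_\epsilon\,\iota\cdot\d(w,w')\,\d(w',z)$, so choosing $\iota(\epsilon)\leq c_\epsilon^{-1}$ makes this $\lesssim\d(w,w')\,\d(w',z)$ and yields the \emph{uniform} (but not $\epsilon$-small) difference bound \eqref{E:6.4.6} (this is Lemma \ref{L:6.3.1} in the paper). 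The geometric mean is then taken between the $\epsilon$-small size bound and this $O(1)$ difference bound, which is what actually produces $\epsilon^{1/2}$ and, together with Lemma \ref{L:6.3.2}, dictates the threshold $\iota(\epsilon)$. Without this step your argument cannot deliver a Calder\'on--Zygmund constant that tends to zero with $\epsilon$.
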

{\bf Remark C.} The proof also shows that $\epsilon^{1/2}$ can be replaced by 
$\epsilon^{\beta}$ for any $0<\beta<1$ (but not $\beta =0$, nor $\beta=1$). However any bound that tends to zero with $\epsilon$ will suffice in 
our application below.
\medskip

\noindent {\em Proof of Theorem \ref{T:6.1.1}.}\quad Let us see how Proposition \ref{P:6.2.2} proves 
Theorem \ref{T:6.1.1}. Consider the case $1<p\leq 2$. Since $\Ctre = \Ctrei + \Etrei$ we have by Proposition \ref{P:6.2.1} (in fact identity {\em (c)}) that
\begin{equation}\label{E:6.2.1}
\Ctre +\S(\Etrei)^* - \S\Etrei = \S\big(I+\Ctrei-(\Ctrei)^*\big).
\end{equation}
However $I+\Ctrei-(\Ctrei)^*$ is invertible as a bounded operator on $L^p (\bndry D)$ when 
$\epsilon$ and $\iota$ are taken sufficiently small, by Proposition \ref{P:6.2.2}, as can be seen
by applying a Neumann series when $\epsilon^{1/2}M_p<<1$.

Next, the kernel of $\Etrei$ is supported where
$\d (w, z)\gtrsim\iota$, 
and can be seen to be bounded by 
$ c_\epsilon's^{-1}\d(w, z)^{-2n+1}$, see \eqref{E:remainder-bound-a}.
Thus $\Etrei$ maps $L^1(\bndry D)$ to $L^\infty (\bndry D)$ (but the norm $\|\Etrei\|_{L^1\to L^\infty}$ is not bounded as $\epsilon$ and 
$\iota$ 
tend to zero!). The same may be said of
$(\Etrei)^*$.

So $\Etrei$ and $(\Etrei)^*$ map $L^p(\bndry D)$ to $L^2(\bndry D)$, while $\S$ maps
 $L^2(\bndry D)$ to $L^2(\bndry D)$, and hence it maps $L^2(\bndry D)$ to $L^p(\bndry D)$, because
 $p\leq 2$. (Here we use the hypotesis that $D$, and hence $\bndry D$, is bounded.)
 
 Altogether then the left-hand side of \eqref{E:6.2.1} is bounded on $L^p(\bndry D)$, in view of the corresponding boundedness of $\Ctre$, Theorem \ref{T:1}. Applying $(I+(\Ctrei -(\Ctrei)^*)^{-1}$
 to both sides of \eqref{E:6.2.1} yields the boundedness of $\S$ on $L^p(\bndry D)$, when $1<p\leq 2$. The case $p>2$ follows by duality.\qed
 
 At this point we digress briefly to state, for the sake of comparison, a companion result that will be needed in Section \ref{S:last}, when we will deal with the Cauchy-Szeg\H o projection for $L^2(\bndry D, d\sigma)$.
 
 \begin{Prop}\label{P:X}
For any $\eps>0$ there is an $\iota (\eps)>0$, so that when $\iota\leq \iota (\eps)$
\begin{equation}\label{Eq:star}
\|\,\Ctrei\,\|_{_{L^p\to L^p}}\lesssim M_p\, ,\quad \mbox{for}\ 1<p<\infty.
\end{equation}
\end{Prop}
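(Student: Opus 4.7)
The plan is to obtain $\|\Ctrei\|_{L^p\to L^p}\lesssim M_p$ by applying the $T(1)$-theorem of \cite{C-2} directly to the essential part of $\Ctrei$, with care taken so that every constant produced is independent of $\epsilon$ once $\iota\leq\iota(\epsilon)$ is sufficiently small. The structural pattern mirrors the proof of Theorem \ref{T:1} for $\Ctr$, but the point of the statement \eqref{Eq:star} is that the new, near-diagonal cutoff makes it possible to avoid picking up the bad constant $c_\epsilon$ that arises in \eqref{E:4.13p}, and which in general makes $\|\Ctre\|_{L^p\to L^p}$ unbounded as $\epsilon\to 0$.

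\textbf{Steps.} First I would decompose $\Ctrei=\Ctrsei+\Rtrei$ in parallel with \eqref{E:4.6a}, where $\Ctrsei$ has kernel $\phi(\d(w,z)/\iota)\,\ge(w,z)^{-n}$ (with $\phi$ a smooth cutoff, $\phi=1$ near $0$ and $\phi=0$ for arguments $\geq 1$) and $\Rtrei$ collects all remainder contributions. By \eqref{E:remainder-bound-a} the kernel of $\Rtrei$ is dominated by $\d(w,z)^{-2n+1}$, and \eqref{E:2.16} combined with the Schur test \eqref{E:schur-1}--\eqref{E:schur-2} furnishes a uniform bound $\|\Rtrei\|_{L^p\to L^p}\lesssim 1$. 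So the task reduces to showing $\|\Ctrsei\|_{L^p\to L^p}\lesssim M_p$. For this I would verify the hypotheses of \cite[Theorem 13]{C-2} relative to the homogeneous-type structure $(\bndry D,\d,d\lambda)$: (i) the size bound $|\phi(\d/\iota)\ge^{-n}|\lesssim \d(w,z)^{-2n}$ is uniform in $\epsilon$ via \eqref{E:eg-est}; (ii) the $z$-regularity $|K(w,z)-K(w,z')|\lesssim \d(z,z')/\d(w,z)^{2n+1}$ follows from \eqref{E:4.13} and the smooth dependence of $\phi$ on its argument, again uniformly in $\epsilon$; (iii) the cancellation conditions $\Ctrsei(1),\,(\Ctrsei)^{\!*}(1)\in\mathrm{BMO}$ with uniform norm are established by the argument of Proposition \ref{P:8}, where the truncation modifies the output only by a continuous function supported where $\d\approx\iota$ whose contribution is absorbed into the uniform estimate; and (iv) the weak boundedness property $|(\Ctrsei f_1,f_2)|\lesssim r_1^n r_2^n$ for normalized bumps is obtained from Lemma \ref{L:general} and Proposition \ref{P:9}, since the kernel estimates invoked there are $\epsilon$-uniform.

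\textbf{Main obstacle and the role of $\iota$.} The substantive difficulty is the $w$-regularity of the kernel: the bound \eqref{E:4.13p} carries the constant $c_\epsilon=\sup_w|\nabla\tau^\epsilon_{jk}(w)|$, which is unbounded as $\epsilon\to 0$. The idea is that because $\phi(\d(w,z)/\iota)$ confines the kernel to $\d(w,z)\lesssim\iota$, the Hölder-in-$w$ condition of exponent $\alpha$ required by the $T(1)$-theorem is only tested on a region of diameter $\iota$. One obtains there a hybrid estimate by interpolating between the trivial size bound (no smoothness) and the $c_\epsilon$-bound \eqref{E:4.13p}, which produces a factor $c_\epsilon^{\alpha}\iota^{\alpha\cdot(\text{power})}$ that can be made as small as desired by the choice $\iota\leq \iota(\epsilon)$. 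Any fixed positive $\alpha$ in \eqref{E:alphapositive} suffices to trigger the conclusion of the $T(1)$-theorem; the resulting $L^p$ constant is the universal $M_p=p/(p-1)+p$ from the Calderón--Zygmund $L^p$ extrapolation, finishing the proof when combined with the bound for $\Rtrei$.
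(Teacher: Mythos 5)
Your overall architecture coincides with the paper's: split off the sharp part $\Ctrsei$, dispose of the remainder by the Schur test \eqref{E:schur-1}--\eqref{E:schur-2}, and run the $T(1)$-theorem on $\Ctrsei$ with constants made uniform in $\epsilon$ by the choice of $\iota$. However, the step you identify as the "main obstacle" --- removing $c_\epsilon$ from the $w$-regularity of the kernel --- is handled by a mechanism that does not work. Interpolating between the size bound $\d(w,z)^{-2n}$ and the Lipschitz bound \eqref{E:4.13p} gives
\begin{equation*}
\left(\d(w,z)^{-2n}\right)^{1-\alpha}\left(c_\epsilon\,\frac{\d(w,w')}{\d(w,z)^{2n+1}}\right)^{\!\alpha}
= c_\epsilon^{\alpha}\,\frac{\d(w,w')^{\alpha}}{\d(w,z)^{2n+\alpha}}\, ,
\end{equation*}
i.e.\ exactly the Hölder-in-$w$ condition required by the $T(1)$-theorem but with the unbounded constant $c_\epsilon^{\alpha}$ in front, and \emph{no} factor of $\iota$ appears. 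The restriction of the support to $\d(w,z)\lesssim\iota$ cannot be converted into a positive power of $\iota$ here: any attempt to trade $\d(w,w')^{1-\alpha}\leq\iota^{1-\alpha}$ forces an extra $\d(w,z)^{-(1-\alpha)}$, which is unbounded near the diagonal. So the hybrid estimate you describe leaves the constant $c_\epsilon^{\alpha}$ intact and the argument does not close.

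What actually removes $c_\epsilon$ is the structural Lemma \ref{L:6.3.1} (which the paper invokes at precisely this point): in the difference $\ge(w,z)-\ge(w',z)$ the only term carrying $c_\epsilon$ is $Q_w(w'-z)-Q_{w'}(w'-z)$, which is $O\big(c_\epsilon\,\d(w,w')\,\d(w',z)^2\big)$ --- one full power of $\d(w',z)$ \emph{better} than the target $\d(w,w')\,\d(w,z)$. On the support of the cutoff $\d(w',z)\leq\iota$, so taking $\iota\leq c_\epsilon^{-1}$ makes $c_\epsilon\,\d(w',z)\leq 1$ and yields \eqref{E:6.3.2} with an $\epsilon$-independent constant; the difference estimate for $\ge(w,z)^{-n}\chi_\iota(w,z)$ then follows as in the proof of \eqref{E:6.4.6}, with the derivative of the cutoff contributing an acceptable $\d(w,w')/\d(w,z)$ because $\chi_\iota(w,z)-\chi_\iota(w',z)$ vanishes unless $\d(w,z)\approx\iota$. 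You need this structural input; no abstract interpolation of the two kernel bounds substitutes for it. A secondary imprecision: the kernel of the truncated remainder is bounded by $\tilde c_\epsilon\,\d(w,z)^{-2n+1}\chi_\iota(w,z)$, not uniformly by $\d(w,z)^{-2n+1}$ (the forms $\alpha_j$ involve $\nabla\tau^\epsilon_{jk}$); the Schur integral is therefore $\tilde c_\epsilon\, O(\iota)$, and it is again the smallness of $\iota$ relative to $\epsilon$, not uniformity, that makes this term harmless. Likewise the cancellation conditions should be run through the argument of Lemma \ref{L:6.5.1} (followed by Lemma \ref{L:general}), where the $O_\epsilon$ terms produced by differentiating $\ge$ and $\chi_\iota$ are absorbed by taking $\iota\leq\iota(\epsilon)$.
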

Note that as opposed to \eqref{E:6.2.0a} in Proposition \ref{P:6.2.2}, here we cannot have a gain in $\eps$ by making $\iota$ small.

\subsection{Proof of Proposition \ref{P:6.2.2}}
\label{SS:6.3}
As stated above,
 the norm of the Cauchy transform $\Ctre$ may be unbounded when $\epsilon\to 0$. This is due to the fact that we have replaced the continuous functions $\dee^2\rho(w)/\dee w_j\dee w_k$ by their $C^1$ approximations $\tau^\epsilon_{j, k}(w)$, and the quantities
\begin{equation}\label{E:def-ceps}
c_\epsilon =\sup\limits_{\stackrel{w\in\bndry D}{1\leq j, k\leq n}}
|\nabla\tau^\epsilon_{j, k}(w)|
\end{equation}
which first occurred in \eqref{E:starp} and appear again below, will in general tend to infinity as $\epsilon\to 0$ in a manner that reflects 
the modulus of continuity of the $\dee^2\rho(w)/\dee w_j\dee w_k$. The constants $c_\epsilon$ first appeared in \eqref{E:starp};
  they also occurred implicitly at various other stages when first derivatives of the $\tau^\epsilon_{j, k}(w)$ were involved. Among these instances are the definition of the Cauchy integral \eqref{E:3.2} via the form $\deebar G$ (see \eqref{E:3.1}), and the coefficients 
$\alpha_j(w, z)$ that enter in the formula \eqref{E:4.3}. These facts are all working against us as we try to control the growth of $\Ctre$ (or $\Ctrei$). However, not hindering us is the simple observation
that was used before, see \eqref{E:2.5}, namely the fact that
$$
|\ge (w, z)|\approx |\g_0 (w, z)| = \d (w, z)^2\, ,
$$
with the implied bounds not dependent on $\epsilon$. 

Moreover, what really helps us are two lemmas below. The first shows that under the right 
circumstances we can remove the bound $c_\epsilon$ from \eqref{E:starp}.
\begin{Lem}\label{L:6.3.1}
For every $\epsilon>0$ there is  an
$\iota =\iota (\epsilon)$,
so that if 
$\iota\leq\iota(\epsilon)$ 
and
  $\d(w, z)\leq \iota$, $\d(w', z)\leq \iota$, 
 then
 \begin{equation}\label{E:6.3.2}
 |\ge(w, z) - \ge (w', z)|\lesssim \d(w, w')^2 + \d(w, w')\,\d(w, z)\, .
 \end{equation}
\end{Lem}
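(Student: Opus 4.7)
\textbf{Plan of proof for Lemma \ref{L:6.3.1}.} The goal is to remove the factor $c_\epsilon$ from the difference inequality \eqref{E:starp}, at the price of restricting to a neighborhood whose size $\iota(\epsilon)$ will be chosen in terms of $c_\epsilon$. The plan is to revisit the proof of \eqref{E:starp} and isolate the single term in which $c_\epsilon$ enters; then to absorb this factor by means of the smallness hypothesis $\d(w,z),\d(w',z)\leq \iota$.

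First I would observe that by \eqref{E:2.14} we have $|w-z|\lesssim \d(w,z)\leq \iota$ and $|w'-z|\lesssim \d(w',z)\leq \iota$; hence for $\iota\leq \iota(\epsilon)$ sufficiently small the cutoff $\chi$ equals $1$ on both $(w,z)$ and $(w',z)$, so that
\[
\ge(w,z)=\langle\dee\rho(w),w-z\rangle+Q_w(w-z),\qquad \ge(w',z)=\langle\dee\rho(w'),w'-z\rangle+Q_{w'}(w'-z).
\]
Following the argument for \eqref{E:starp}, I split $\ge(w,z)-\ge(w',z)=I+II$ with $I$ the difference of the linear terms and $II=Q_w(w-z)-Q_{w'}(w'-z)$, and then write $II=[Q_w(w-z)-Q_w(w'-z)]+[Q_w(w'-z)-Q_{w'}(w'-z)]$. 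The argument already carried out in the paper shows
\[
|I|+|Q_w(w-z)-Q_w(w'-z)|\ \lesssim\ \d(w,w')^2+\d(w,w')\,\d(w,z),
\]
with an implicit constant independent of $\epsilon$ (since the coefficients of $Q_w$ are uniformly bounded in $\epsilon$, being $\epsilon$-perturbations of $\dee^2\rho/\dee w_j\dee w_k$). Thus the only obstruction to the desired estimate is the final term, which \eqref{E:starp} bounds by $c_\epsilon |w-w'|\,|w'-z|^2$.

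The key new step is to absorb this single bad term. Using \eqref{E:2.14} twice, together with the symmetry property (b) of Proposition \ref{P:2}, I get
\[
c_\epsilon|w-w'|\,|w'-z|^2\ \lesssim\ c_\epsilon\,\d(w,w')\,\d(w',z)^2\ \leq\ c_\epsilon\,\iota\ \d(w,w')\,\d(w',z).
\]
At this point I simply choose $\iota(\epsilon)>0$ small enough that $c_\epsilon\,\iota(\epsilon)\leq 1$ (with $c_\epsilon$ as in \eqref{E:def-ceps}); for any $\iota\leq \iota(\epsilon)$ the last display is then bounded by $\d(w,w')\,\d(w',z)$ with an absolute constant. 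Finally, the quasi-triangle inequality of Proposition \ref{P:2}(c) gives $\d(w',z)\lesssim \d(w,w')+\d(w,z)$, whence
\[
\d(w,w')\,\d(w',z)\ \lesssim\ \d(w,w')^2+\d(w,w')\,\d(w,z),
\]
which is exactly \eqref{E:6.3.2}. Combining this with the $c_\epsilon$-free bound for $|I|$ and the first part of $|II|$ finishes the argument.

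The \emph{main obstacle}, and in fact the only delicate point, is the choice of threshold $\iota(\epsilon)$: the constant $c_\epsilon=\sup|\nabla\tau^\epsilon_{jk}|$ typically blows up as $\epsilon\to 0$ (at a rate dictated by the modulus of continuity of $\dee^2\rho/\dee w_j\dee w_k$), and this is reflected in $\iota(\epsilon)\to 0$. Everything else is a careful bookkeeping exercise in tracking which constants are uniform in $\epsilon$ and which are not, together with a single application of \eqref{E:2.14} to convert one factor of $|w'-z|$ into $\d(w',z)\leq \iota$.
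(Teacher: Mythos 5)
Your proposal is correct and follows essentially the same route as the paper: isolate the single term $Q_w(w'-z)-Q_{w'}(w'-z)$ carrying the constant $c_\epsilon$, bound it by $c_\epsilon\,\d(w,w')\,\d(w',z)^2$, absorb one factor of $\d(w',z)\leq\iota$ by choosing $\iota(\epsilon)\approx c_\epsilon^{-1}$, and finish with the quasi-triangle inequality $\d(w',z)\lesssim\d(w,w')+\d(w,z)$. No gaps.
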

Indeed,
recalling
 the proof of \eqref{E:starp} we see that
   it gives
$$
|\ge(w, z) - \ge (w', z)|\lesssim \d(w, w')^2 + \d(w, w')\,\d(w, z) + |\,II\,|\, ,
$$
with $II=Q_w(w'-z)-Q_{w'}(w'-z)$, and in \eqref{E:star} it was proved  that 
$$
|\,II\,|\,\lesssim c_\epsilon\, |w-w'|\,|w'-z|^2
$$
where $c_\epsilon$ is as in \eqref{E:def-ceps}. Also $|w-w'|\lesssim \d(w, w')$, $|w'-z|\lesssim 
\d(w', z)$, and $\d(w', z)\lesssim \d(w, w') + \d(w, z)$ by the triangle inequality.

Thus if we take 
$\iota_0(\epsilon) = c_\epsilon^{-1}$, 
then
$$
|\,II\,|\lesssim c_\epsilon\d(w, w')\,\d(w', z)^2\leq \d(w, w')\,\d(w', z)\lesssim
\d(w,w')^2+\d(w, w')\,\d(w, z),
$$
which establishes \eqref{E:6.3.2}.

The near symmetry of the Cauchy kernel (up to negligeable errors) was essential in the treatment
 of the Cauchy-Szeg\H o projection in the case of smooth domains, as in \cite{KS-2}. Only a vestige of this
 fact remains in our case and it is given as follows.
 We recall that $|\ge(w, z)|\approx|\ge(z, w)|\approx\d(w, z)^2$.
 \begin{Lem}\label{L:6.3.2}
 For every $\epsilon>0$ there is an 
  $\iota(\epsilon)>0$,
 so that if 
  $\iota\leq \iota(\epsilon)$
 then
 \begin{equation}\label{E:6.3.3}
 |\ge(w, z)-\bar{\ge}(z, w)|\lesssim \epsilon\,\d(w, z)^2
 \end{equation} 
 whenever 
  $\d(w, z)\lesssim \iota$.
 \end{Lem}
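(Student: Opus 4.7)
The plan is to choose $\iota(\epsilon)$ so small that two conditions hold: (a) $|w-z|<\mu/2$ whenever $\d(w,z)\leq\iota(\epsilon)$, so that the cutoff $\chi$ in \eqref{E:2.3} equals $1$ and hence $\ge=\Dep$, $\go=\Do$; and (b) the modulus of continuity of $\dee^2\r$ on the compact set $\bndry D$, evaluated at any scale $\leq C\iota(\epsilon)$, is at most $\epsilon$. This is possible because $\dee^2\r$ is uniformly continuous on $\bndry D$. Under (a), I split
\[
\ge(w,z)-\bar{\ge}(z,w) = \bigl[\ge(w,z)-\Do(w,z)\bigr] + \bigl[\Do(w,z)-\overline{\Do(z,w)}\bigr] + \bigl[\overline{\Do(z,w)} - \bar{\ge}(z,w)\bigr].
\]
The first and third pieces are immediately bounded by $C\epsilon|w-z|^2\leq C\epsilon\d(w,z)^2$ from the defining bound $|\tau^\epsilon_{jk}(w)-\dee^2\r(w)/\dee w_j\dee w_k|\leq\epsilon$ together with \eqref{E:2.14}.

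The core of the argument is the middle piece. Crucially, the $\epsilon$-gain here does \emph{not} come from the smoothing parameter (the $\Do$'s do not depend on $\epsilon$), but from the smallness of $|w-z|$ combined with the boundary relation $\r(w)=\r(z)=0$. A first-order Taylor expansion of $\dee\r$ around $w$ (using only that $\dee\r\in C^1$) yields
\[
\langle\dee\r(z),z-w\rangle = -\langle\dee\r(w),w-z\rangle + P_w(w-z) + H_w(w-z) + R_1,
\]
where $P_w(v)=\sum_{j,k}\r_{w_jw_k}(w)v_jv_k$ is the pure Hessian, $H_w(v)=\sum_{j,k}\r_{w_j\bar w_k}(w)v_j\bar v_k$ is the (real) Levi form at $w$, and $R_1$ is controlled by the modulus of continuity of $\dee^2\r$ at scale $|w-z|$, times $|w-z|^2$. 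Conjugating and substituting into the explicit formula for $\overline{\Do(z,w)}$ produces a $2\,\Re\langle\dee\r(w),w-z\rangle$ contribution, which I then eliminate using $\r(z)-\r(w)=0$ together with the second-order $C^2$ Taylor expansion of the real-valued $\r$ at $w$:
\[
2\,\Re\langle\dee\r(w),w-z\rangle = \Re P_w(w-z) + H_w(w-z) + R_2,
\]
with $R_2$ of the same order as $R_1$.

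After these substitutions the $\dee\r$-contributions disappear, the Levi forms $H_w$ cancel, and the pure Hessians combine so that only a \emph{difference of pure Hessians at different points} survives:
\[
\Do(w,z) - \overline{\Do(z,w)} = \tfrac12\sum_{j,k}\bigl[\overline{\r_{w_jw_k}(z)}-\overline{\r_{w_jw_k}(w)}\bigr](\bar w_j-\bar z_j)(\bar w_k-\bar z_k) + R,
\]
where both the displayed sum and $R$ are bounded by (the modulus of continuity of $\dee^2\r$ at scale $|w-z|$)$\,\times|w-z|^2$. By condition (b), this is $\leq C\epsilon\d(w,z)^2$, which, combined with the bounds for the outer pieces, yields \eqref{E:6.3.3}. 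The main obstacle in executing the plan is the combinatorial bookkeeping in the middle step: one must verify the precise cancellations among $P_w$, its conjugate $\bar P_w$ (both from $\Do(w,z)$), $\bar P_z$ (from $\overline{\Do(z,w)}$), and the Levi form $H_w$, routed through the boundary Taylor identity, so that only the Hessian difference $\bar P_z-\bar P_w$ remains—this is exactly what produces the desired $\epsilon$-gain through uniform continuity of $\dee^2\r$.
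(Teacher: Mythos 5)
Your proof is correct and follows the same route the paper indicates (it defers the computation to \cite[Section 2.3]{LS-2}, noting only that the bound comes from the modulus of continuity of the second derivatives of $\rho$): you isolate the $\tau^\epsilon_{jk}$-approximation error, which is trivially $O(\epsilon|w-z|^2)$, and then establish the near-symmetry of the Levi polynomial itself via the two Taylor identities and the cancellation of the Levi form, leaving only a difference of Hessians at $w$ and $z$ controlled by the modulus of continuity at scale $|w-z|\lesssim\iota$. The bookkeeping in your middle step checks out ($\Re P_w-\tfrac12P_w-\bar P_w=-\tfrac12\bar P_w$, so only $\tfrac12(\bar P_z-\bar P_w)$ survives), and the only cosmetic point is that the uniform continuity of $\dee^2\rho$ should be taken on a compact neighborhood of $\bndry D$ rather than on $\bndry D$ itself, since the Taylor remainders involve points on the segment from $w$ to $z$.
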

 The proof of \eqref{E:6.3.3} uses the modulus of continuity of the second derivatives of $\rho$ and
  is essentially given in \cite[Section 2.3]{LS-2}. It is shown there that the left-hand side of
   \eqref{E:6.3.3} is majorized by $c_0\epsilon\,|w-z|^2$ if $|w-z|\leq \delta(\epsilon)$.
    Since $|w-z|\leq c_1\d(w, z)$, we obtain \eqref{E:6.3.3} when we take
         $\iota(\epsilon)=c'\delta(\epsilon)$ 
     for an appropriate constant $c'$.
 \bigskip
 
   {\em Proof of Proposition \ref{P:6.2.2}}.\quad
 We begin by describing the truncations of the operators that we will deal with. If $\Ctre$ is our Cauchy transform, then for $\iota>0$ 
 its truncated version $\Ctrei$ is defined by
 \begin{equation}\label{E:6.4.1a}
 \Ctrei (f)(z) = \Ctre \big(f(\cdot)\, \chi_\iota(\cdot, z)\big)(z),\quad z\in\bndry D.
 \end{equation}
 Here $\chi_\iota (w, z)$ is the symmetrized 
version of the cut-off function
 given
 in the proof of Lemma \ref{L:general}.
 It is defined by 
$$
\chi_\iota (w, z) = \tilde\chi_{\iota, w}( z)\, \tilde\chi_{\iota, z}(w), 
$$
where we recall that
$$
\tilde\chi_{\iota, w}(z) = 
\chi\left(\frac{\mathrm{Im}\langle\dee\rho (w), w-z\rangle}{c\,\iota^2}\,+i\, \frac{|w-z|^2}{c\, \iota^2}\right)
$$
with $\chi (u+iv)$ as in the proof of Lemma \ref{L:general}.

 Instead of dealing directly with $\Ctre$ and its truncation $\Ctrei$, we work with $\Ctrse$ and its corresponding truncation $\Ctrsei$.
 We do so  because identities \eqref{E:4.7a} and \eqref{E:4.7} make the choice of the formal adjoint of $\Ctrse$ obvious, as opposed to the situation for
  $\Ctre$. 
  Now, by what has been said above, the truncation $\Ctrsei$ is defined by
   \begin{equation*}
\Ctrsei (f)(z) = \Ctrse \big(f(\cdot)\, \chi_\iota(\cdot, z)\big)(z),\quad z\in\bndry D\, .
\end{equation*} 

We observe
 that the 
kernel of $\Ctrsei$ is $\ge(w, z)^{-n}\chi_\iota (w, z)$, in the sense that 
\begin{equation*}
\Ctrsei (f)(z) =
\int\limits_{\bndry D}\!\! \ge(w, z)^{-n}\chi_\iota(w, z)f(w)\, d\lambda(w)\, ,
\end{equation*}
whenever $f$ satisfies the H\"older regularity 
\eqref{E:3.4} and $z\in\bndry D$ is such that
$f(z)=0$;
  this is because a 
  corresponding
  formula holds for $\Ctrse$, see \eqref{E:4.6b}.
Using the reasoning of Proposition \ref{P:7} we can see that there 
is an adjoint $(\Ctrsei)^*$, defined on functions that satisfy the H\"older regularity 
\eqref{E:3.4}, such that
$$
\big((\Ctrsei)^* (f_1), f_2\big) = \big(f_1, (\Ctrsei)^* f_2\big)
$$
whenever $f_1$ and $f_2$ are a pair of such functions. Moreover
\begin{equation*}
(\Ctrsei)^* (f)(z) = (\Ctrse)^* \big(f(\cdot)\,\chi_\iota(\cdot, z)\big)(z)\, ,
\end{equation*}
and
\begin{equation*}
(\Ctrsei)^* (f)(z) = \int\limits_{\bndry D}\!\bar{\ge}(z, w)^{-n}\chi_\iota(w, z)f(w)\,d\lambda (w)\,
\end{equation*}
with the second identity holding whenever $f(z)=0$.

With these in place we turn to the proof of the analogue of \eqref{E:6.2.0a}, where the operator $\Ctrei$ is replaced by $\Ctrsei$. To simplify the notation we set 
$$
\Dsei = \Ctrsei - (\Ctrsei)^*.
$$
We assert that the operator $\Dsei$, initially defined on H\"older-type functions, extends to a bounded operator on $L^p(\bndry D)$, with 
\begin{equation}\label{E:6.4.4}
\|\Dsei\|_{L^p\to L^p}\lesssim \epsilon^{1/2}M_p
\end{equation}
for any $1<p<\infty$, as long as $\iota\leq\iota(\epsilon)$. (The relevance of the exponent $1/2$ is discussed in Section \ref{SS:6.5} below, see Remark D there.)

	Before coming to the proof of \eqref{E:6.4.4} let us see how this implies Proposition \ref{P:6.2.2}.  
	To this end, we begin by
		writing
	\begin{equation*}
	\Ctrei = \Ctrsei + \Rtrsei\, 
	,\quad\mbox{and}\quad 
	(\Ctrei)^* = (\Ctrsei)^* + (\Rtrsei)^*\, .
		\end{equation*}
	From these
	 it follows that
	 $$
	 \Ctrei - (\Ctrei)^* =\Dsei + \Atrei
	 $$
	 with
	 $$
	 \Atrei = \Rtrsei  - (\Rtrsei)^*.
	 $$
	Now each of the components of the kernel of $\Atrei$ is bounded by
	$$
		 \tilde c_\epsilon\d(w, z)^{-2n+1}\chi_\iota(w, z),
		$$
	see \eqref{E:remainder-bound-a}.
	From this and \eqref{E:2.16} it follows that the operator $\mathcal T=s^{-1}\Atrei$ satisfies the estimates \eqref{E:schur-1} and these in turn imply that
  $$
  \|\Atrei\|_{L^p\to L^p}\leq \tilde c_\epsilon O(\iota)\, ,
  $$
  see \eqref{E:schur-2}. While the constant $\tilde c_\epsilon$ may be large as $\epsilon\to 0$, we have that and $\tilde c_\epsilon O(\iota) = O(\epsilon^{1/2})$, if $\iota$ is sufficiently small. This proves 
  Proposition \ref{P:6.2.2} and as the argument in Section \ref{SS:mainthm} also shows, gives the proof of Theorem \ref{T:6.1.1}, assuming the validity of \eqref{E:6.4.4}.

 \subsection{Kernel estimates and cancellation conditions of $\Dsei$}\label{SS:6.5}
We will apply the $T(1)$ theorem to
prove \eqref{E:6.4.4}, and so we need to come to grips with two issues. First, there are the difference inequalities for the kernel of $\Dsei$. Here, as opposed to the situation for
$\Ctrs=\Ctrse$ and $(\Ctrs)^*= (\Ctrse)^*$ given in \eqref{E:4.13} and \eqref{E:4.13p},
 we need to see an appropriate gain in $\epsilon$ (of order $\epsilon^{1/2}$). Second, for the cancellation properties, we will see a gain, in fact of order $\epsilon$, beyond what holds for $\Ctrs$ as stated in Proposition
 \ref{P:9}. 
 
 Let us now deal with the kernel of $\Dsei$, which we denote by $\Ksei (w,z)$. According to our previous discussion it is 
 \begin{equation*}
 \Ksei (w,z) = (\ge (w, z)^{-n} -\bar{\ge}(z, w)^{-n})\,\chi_\iota (w, z)\, .
 \end{equation*}
 Our objective is to prove the inequality:
 \begin{equation}\label{E:6.4.7}
  |\Ksei (w,z) - \Ksei (w',z)|\lesssim 
  \epsilon^{1/2}\frac{\d (w, w')^{1/2}}{\d (w, z)^{2n + 1/2}}\quad \mbox{for any} \ 
  \iota\leq\iota(\epsilon),
  \end{equation}
  under the proviso that 
  $$
  \d(w, z)\geq c\,\d(w,w')
   $$
  for a sufficiently large constant $c$.
  We claim first that 
\begin{equation}\label{E:6.4.5}
|\Ksei (w,z)|\lesssim\epsilon\,\d(w, z)^{-2n},\quad \mbox{if}\ \iota\leq\iota(\epsilon).
\end{equation}
In fact since $|\ge (w, z)|\approx |\ge(z, w)| \approx \d(w, z)^2$, we have
$$
|\ge (w, z)^{-n} -\bar{\ge}(z, w)^{-n}|\lesssim \frac{|\ge (w, z) -\bar{\ge}(z, w)|}{|\ge(z, w)|^{n+1}}\, .
$$
However the support of $\chi_s(w, z)$ restricts matters to $w$ in $\B_\iota(z)$, and thus 
inequality \eqref{E:6.3.3}
ensures that 
$$
|\Ksei (w,z)|\lesssim\epsilon\,\frac{\d(w, z)^2}{\d(w, z)^{2n+2}}
=
\epsilon\,\d(w, z)^{-2n},\quad 
\mbox{when}\ \d(w, z)\leq\iota,
$$
and \eqref{E:6.4.5} is thus established. Note that under our assumptions $\d(w, z)\approx \d(w', z)$ and hence we also have
\begin{equation}\label{E:6.4.5.a}
|\Ksei (w',z)|\lesssim\epsilon\,\d(w, z)^{-2n},\quad \mbox{if}\ \ \iota\leq\iota(\epsilon).
\end{equation}
In particular this and \eqref{E:6.4.5} give
\begin{equation}\label{E:6.4.5.a}
|\Ksei (w,z) - \Ksei (w',z)|\lesssim \epsilon\,\d(w, z)^{-2n}\, \quad \mbox{if}\ \ \iota\leq\iota(\epsilon).
\end{equation}
We next claim that we also have
\begin{equation}\label{E:6.4.6}
|\Ksei (w,z) - \Ksei (w',z)|\lesssim \frac{\d (w, w')}{\d (w, z)^{2n + 1}}\quad \mbox{as long as}\ \ \iota\leq \iota (\epsilon).
\end{equation}
This again follows by treating the constituents of $\Ksei (w,z) - \Ksei (w',z)$ separately. Specifically, we first rearrange the terms of $\Ksei (w,z) - \Ksei (w',z)$ into three separate groups, namely {\em 1.}, a group of terms involving the difference $\ge(w, z) - \ge(w', z)$; {\em 2.}, terms 
that have  $\bar{\ge}(z, w) - \bar{\ge}(z, w')$; and finally {\em 3.}, terms with $\chi_\iota (w, z) -\chi_\iota (w', z)$. We then provide estimates for each such group.
  
   To see that inequality \eqref{E:6.4.6} holds for the first group, 
    we argue as in the proof of \eqref{E:4.13} and \eqref{E:4.13p},
    except that now we invoke Lemma
    \ref{L:6.3.1} which
    avoids
       the undesirable factor $c_\epsilon$. The argument for the terms involving $\bar{\ge}(z, w) - \bar{\ge}(z, w')$ is similar, but here we do not need Lemma \ref{L:6.3.1}. Next, to treat the terms involving 
    $\chi_\iota (w, z) -\chi_\iota (w', z)$ we note that this difference vanishes, unless the equivalent quantities
    $$
    |\ge(w, z)|\approx |\ge(z, w)|\approx \d(w, z)^2
    $$
    are comparable to $\iota^2$, and in such case this difference is easily seen to be
     dominated by a multiple of of
     $$
     \frac{1}{\iota^2}\big(\d(w, w')^2+\d(w, w')\,\d(w', z)\big);
     $$
     since $\iota\approx\d(w, z)\approx \d(w', z)$, here we have that 
     $$|\chi_\iota (w, z) -\chi_\iota (w', z)|\lesssim \frac{\d(w, w')}{\d(w, z)}.$$
     Combining the above we obtain \eqref{E:6.4.6}.
  Finally we combine \eqref{E:6.4.5.a} with \eqref{E:6.4.6} to get 
  the desired regularity \eqref{E:6.4.7} 
    via the geometric mean of these two inequalities.
       Furthermore, since 
     $$\Ksei (w,z) = -\,\bar{\Ksei} (z, w)\, ,$$
  the result \eqref{E:6.4.6} and therefore \eqref{E:6.4.7} with $w$ and $z$ interchanged also follows.
This concludes the proof of the difference inequalities for $\Dsei =\Ctrsei - (\Ctrsei)^*$.
  
  The proof of the cancellation properties of $\Dsei$ is based on the following lemma.
  \begin{Lem}\label{L:6.5.1}
  Suppose $f\in C^1(\bndry D)$, with $|f(w)|\leq 1$ for all $w\in \bndry D$. Then for 
  every $\epsilon>0$ there is an $\iota (\epsilon)>0$, so that if 
  $\iota\leq\iota (\epsilon)$ and $z\in \bndry D$ we have
  $$
  |\Dsei (f) (z)|\,\lesssim \
  \epsilon + \epsilon\!\!\int\limits_{\bndry D}\!|\nabla f (w)|\,\d(w, z)^{-2n+2}d\lambda (w)\, .
  $$
 The implied constant is independent of $\epsilon$, $\iota$ (and $f$ and $z$).
   \end{Lem}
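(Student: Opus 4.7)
The plan is to express $\Dsei(f)(z)$ via an integration-by-parts identity based on Lemma \ref{L:9}, isolating a main term equal to $\Dsei(f)(z)$ plus three error terms where the required small factor $\epsilon$ comes from Lemma \ref{L:6.3.2}. Fix $z \in \bndry D$ and set
\begin{equation*}
h(w,z) := \ge(w,z)^{-n+1} - \bar{\ge}(z,w)^{-n+1}, \qquad F(w) := f(w)\,h(w,z)\,\chi_\iota(w,z).
\end{equation*}
Formally $dF = df\cdot h\,\chi_\iota + f\cdot d_w h\cdot\chi_\iota + f\,h\cdot d_w\chi_\iota$, and applying Lemma \ref{L:9} to $\int_{\bndry D} dF\wedge(\deebar\dee\rho)^{n-1}=0$, then isolating the leading piece of the middle term, will express a nonzero multiple of $\Dsei(f)(z)$ in terms of the other pieces.

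The leading part of $d_w h$ is identified as in the proofs of Propositions \ref{P:6} and \ref{P:8}. From \eqref{E:4.1.a}, $d_w\ge = \dee\rho(w) + \mathrm{corr}$; the parallel formula $d_w\bar\ge(z,w) = -\deebar\rho(w) + \mathrm{corr}$ (with corrections involving factors $|w-z|$, $|\dee\rho(w)-\dee\rho(z)|$, or $\tau^\epsilon_{jk}(w)-\tau^\epsilon_{jk}(z)$) follows just as in the proof of Proposition \ref{P:8}. Hence $d_w h = (-n+1)\bigl[\ge^{-n}\dee\rho(w) + \bar\ge^{-n}\deebar\rho(w)\bigr] + E(w,z)$. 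Since $j^*\dee\rho = -j^*\deebar\rho$ on $\bndry D$, the leading part wedged with $(\deebar\dee\rho)^{n-1}$ and pulled back equals $(-n+1)(2\pi i)^n\,[\ge^{-n} - \bar\ge^{-n}]\,d\lambda$, whose integral against $f\,\chi_\iota$ is a nonzero multiple of $\Dsei(f)(z)$.

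For the three error contributions, Lemma \ref{L:6.3.2} combined with \eqref{E:eg-est} gives the key bound $|h(w,z)| \lesssim \epsilon\,\d(w,z)^{-2n+2}$ on the support of $\chi_\iota$, because $|h| \lesssim |\ge - \bar\ge|/|\ge|^n \lesssim \epsilon\,\d^2\,\d^{-2n}$. Thus the $df$-piece contributes at most $\epsilon\!\int |\nabla f(w)|\,\d(w,z)^{-2n+2}\,d\lambda(w)$, matching the second term of the lemma. The $d_w\chi_\iota$-piece is supported in the annulus $\d(w,z) \approx \iota$, where $|d_w\chi_\iota| \lesssim \iota^{-2}$, $|h| \lesssim \epsilon\,\iota^{-2n+2}$, and the measure is $\approx \iota^{2n}$; together with $\|f\|_\infty \leq 1$ this contributes $O(\epsilon)$. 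The correction $E(w,z)$ carries either an extra factor vanishing like $\d(w,z)$ or like a modulus-of-continuity factor, paired with $\ge^{-n}$ or $\bar\ge^{-n}$ of size $\d^{-2n}$; integrating over $\{\d(w,z)\leq \iota\}$ via \eqref{E:2.16} yields a bound $O(\iota^\beta)$ for some $\beta>0$, which becomes $O(\epsilon)$ once $\iota\leq\iota(\epsilon)$ is chosen sufficiently small. Lemma \ref{L:6.3.1} is invoked here precisely to prevent the blow-up constant $c_\epsilon$ from entering the $\tau^\epsilon_{jk}$-contributions.

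The main obstacle is twofold. First, the formal identity must be justified despite the nonintegrable singularity of $d_w h$ near $w=z$; this is handled by the standard regularization $z \leadsto z^\delta = z + \delta\nu_z$ used in the proof of Proposition \ref{P:8}, followed by passage to the limit $\delta\to 0$, with the additional $\delta$-dependent corrections to $d_w h$ vanishing uniformly in $z$. Second, the bookkeeping of each sub-piece of $E(w,z)$ requires care to keep every constant independent of $\epsilon$ (and of $f$, $z$), with the $c_\epsilon$-free Lemma \ref{L:6.3.1} being the key ingredient that makes the argument quantitative.
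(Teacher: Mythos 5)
Your proposal is correct and follows essentially the same route as the paper: the same choice $F(w)=f(w)\bigl(\ge(w,z)^{-n+1}-\bar{\ge}(z,w)^{-n+1}\bigr)\chi_\iota(w,z)$ fed into Lemma \ref{L:9}, the same identification of the leading term with $\Dsei(f)(z)$ via $j^*\dee\rho=-j^*\deebar\rho$, and the same three error terms (the $df$-piece controlled by $|h|\lesssim\epsilon\,\d^{-2n+2}$ from Lemma \ref{L:6.3.2}, the $d_w\chi_\iota$-piece giving $O(\epsilon)$, and the $\epsilon$-dependent correction absorbed by taking $\iota\leq\iota(\epsilon)$ small). The only cosmetic difference is that the paper simply lets the constant $c_\epsilon$ enter the correction term as an $O_\epsilon$ and beats it with small $\iota$, rather than invoking Lemma \ref{L:6.3.1}, but your fallback of absorbing $O(\iota^\beta)\cdot O_\epsilon$ into $\epsilon$ is exactly what the paper does.
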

  \begin{proof} We begin by  considering the kernel
  $$
\tilde\Ksei (w,z) = \frac{1}{1-n}(\ge (w, z)^{-n+1} -\bar{\ge}(z, w)^{-n+1})\,\chi_\iota (w, z)\, 
$$
for $w\in\bndry D$ and $z$ fixed in $D$.
 Our goal is to apply Lemma \ref{L:9} to
$$
F(w) = f(w) \tilde\Ksei (w,z)
$$
(and $\rho$ the defining function of our domain). We claim that computing 
$dF\wedge (\dee\deebar \rho )^{n-1}$ for such an $F$ will give rise to several terms. One of these will be $\Ksei (w,z) f(w) d\l (w)$ (whose integral over $\bndry D$ is precisely $\Dsei (f) (z)$), while the integrals of the other terms will provide
the required bound for $|\Dsei (f) (z)|$.

To see this, we
recall \eqref{E:4.1.a}, which gives us
\begin{equation*}
d_w\ge (w, z)=\dee\rho (w) + O_\epsilon (|w-z|)\, 
\end{equation*}
and similarly,
\begin{equation*}
d_w\bar{\ge}(z, w) = -\deebar\rho (w) + O_\epsilon (|w-z|).
\end{equation*}
Here $O_\epsilon$ indicates that the bound in the term $O_\epsilon (|w-z|)$ depends on $\epsilon$. Indeed such bound is $C\cdot (1+c_\epsilon)$ with $c_\epsilon$ as in
\eqref{E:def-ceps}. Recalling that $j^*\dee\rho=-j^*\deebar\rho$, we get
\begin{equation*}
d_w \tilde\Ksei (w,z) = \Ksei (w, z) \dee\rho (w) + O_\epsilon\big(\d(w, z)^{-2n+1}\big)\chi_\iota(w, z) +
\frac{\epsilon}{\iota^2} O\big(\d(w, z)^{-2n+2}\big)
\end{equation*}
because $|w-z|\lesssim\d(w, z)$ and $|\ge(w, z)|\approx |\ge(z, w)|\approx\d(w, z)^2$, where we recall that
$$
 \ge (w, z) =\langle\dee\r (w), w-z\rangle -\frac12\sum\limits_{j, k} \tau^\epsilon_{jk}(w)\, (w_j-z_j) (w_k-z_k)\, 
 $$
for $w$ near $z$, see \eqref{E:2.3}.
In applying Lemma \ref{L:9} with $F$ as above we keep in mind that 
when $d_w$ acts on $\chi_\iota (w, z)$ we obtain a term which is
$$
O\left(|\tilde\Ksei (w, z)\,f(w)|\,\frac{1}{\iota^2}\, \mathds{1}_\iota (w, z)\right)\, ,
$$
where $\mathds{1}_\iota (w, z)$ is the characteristic function of $\B_\iota(z)$. As a result, we get that
$$
\Dsei
(f) (z) \, = \, I + II + III\,,\quad z\in \bndry D\, ,
$$
where
$$
I= -\frac{1}{(2\pi i)^n}\!\!\int\limits_{\bndry D}\!
\tilde\Ksei (w, z)\, df(w)\wedge j^*(\deebar\dee\rho (w)^{n-1})\, ,
$$

$$ 
II= 
\int\limits_{\B_\iota(z)}\!\!\!O_\epsilon\big(\d(w, z)^{-2n+1}|f(w)|\big)
$$
and
$$
III=
 \int\limits_{\B_\iota(z)}\!\!\!O\!\left( \tilde\Ksei (w, z)\frac{|f(w)|}{\iota^2}\right)\, .
$$
Now the argument given in \eqref{E:6.4.5} shows that 
$$
|\tilde\Ksei (w, z)|\lesssim \epsilon\,\d(w, z)^{-2n+2}
$$
as long as $\d(w, z)\leq \iota(\epsilon)$. Hence we have that 
$$
|I|\lesssim \epsilon\int\limits_{\bndry D}\!\!|\nabla f(w)|\,\d(w, z)^{-2n +2} d\lambda (w).
$$
The term $II$ is bounded by 
$$
\int\limits_{\bndry D}\!O_\epsilon\big(\d(w, z)^{-2n +1}\big)\, d\lambda (w)
$$
and the term $III$ is bounded by 
$$
\frac{\epsilon}{\iota^2}\!\!\!
\int\limits_{\B_\iota (z)}\!\!\! O_\epsilon\big(\d(w, z)^{-2n +2}\big)\, d\lambda (w)\, .
$$
These two terms are bounded respectively by $\iota\cdot O_\epsilon$ and $\epsilon\cdot O(1)$ in view of \eqref{E:2.16}. This gives a bound which is a multiple of $\epsilon +\iota O_\epsilon\lesssim \epsilon$, if we take $\iota\leq \iota (\epsilon)$ with $\iota (\epsilon) =1/O_\epsilon$. The lemma is therefore proved.
  \end{proof}
  We now apply Lemma \ref{L:6.5.1} to prove the cancellation properties of the operator 
  $$
  \Dsei = \Ctrsei - (\Ctrsei)^*\, .
  $$
  We first have that the function $\Dsei (1)$ is in $L^\infty(\bndry D)$ (in fact it can be seen that it is a continuous function) and moreover
  \begin{equation}\label{E:6.5.7}
  |\Dsei (1)|\lesssim \epsilon\, ,\quad \mbox{whenever}\ \ \iota\leq \iota (\epsilon)\, ,
  \end{equation}
    with the implicit constant independent of $\epsilon$, which follows immediately from Lemma \ref{L:6.5.1}.
  
  Since by its definition $(\Dsei)^* = - \Dsei$, we also have
  \begin{equation}\label{E:6.5.8}
  |(\Dsei)^* (1)|\lesssim \epsilon\, ,\quad \mbox{whenever}\ \ \iota\leq \iota (\epsilon)\, .
  \end{equation}
  Next we argue as in Section \ref{SS: C-tr-bdd}. If $f_0$ is a $C^1$-smooth function supported in a ball $\B_r(\wh)$
    that satisfies $|f_0(w)|\leq 1$ and $|\nabla f_0(w)|\leq 1/r^2$, then we can show that
  \begin{equation}\label{E:6.5.9}
  |\Dsei (f_0)(\wh)|\lesssim \epsilon\, \quad \wh\in\bndry D.
  \end{equation}
  Indeed by  Lemma \ref{L:6.5.1}, it suffices to see that 
  $$
  \int
    |\nabla f_0 (w)|\,\d(w, \wh)^{-2n+2}\, d\lambda (w) \ \lesssim 1\, .
  $$
  But in fact our hypotheses on $f_0$ and \eqref{E:2.16} grant
  $$
 \int\!
 |\nabla f_0 (w)|\,\d(w, \wh)^{-2n+2}\, d\lambda (w)\leq 
 \frac{1}{r^2}\!\!\int\limits_{\B_r(\wh)}\!\!\d(w, \wh)^{-2n+2}\, d\lambda (w) \lesssim 1\, ,
  $$
   proving the assertion \eqref{E:6.5.9}.
  
  Altogether, the above shows that $T= \epsilon^{-1}\Dsei$ satisfies the hypotheses of Lemma \ref{L:general},
    and so its conclusion grants that
      \begin{equation}\label{E:6.5.10}
  |\Dsei (f)(z)|\lesssim\epsilon \ \mbox{for all}\ z\in\bndry D,\  \mbox{and}\quad
  \|\Dsei (f)\|_{L^2(\bndry D)}\lesssim\epsilon r^{2n}
  \end{equation}
for $ \iota\leq \iota (\epsilon)$ and  for any normalized bump function $f$ associated to a ball $\B_r(\wh)$, whenever $\wh\in\bndry D$.
  
  We recall the $T(1)$-theorem  as was used  in Section \ref{SS: C-tr-bdd}, after \eqref{E:4.13p}. 
  We now invoke the following a uniform version of this theorem.
  We consider the operators $\epsilon^{-1/2}\Dsei$. They satisfy kernel estimates and cancellation conditions 
that are uniform in $\epsilon$, 
as can be seen by \eqref{E:6.4.7}, \eqref{E:6.5.7}, \eqref{E:6.5.8} and  \eqref{E:6.5.10}.

  Under these uniform conditions one  has that the operators $\epsilon^{-1/2}\Dsei$ satisfy the bounds
  $$
  \|\epsilon^{-1/2}\Dsei\|_{L^p\to L^p}\lesssim M_p\, 
  $$
  with $ M_p$ independent of $\epsilon$.
  This proves \eqref{E:6.4.4}, thus Proposition \ref{P:6.2.2}, and therefore concludes the proof 
  of the main result, Theorem \ref{T:6.1.1}.
  
  \medskip
  
  \noindent{\bf Remark D.}   The argument that proves \eqref{E:6.4.7} also gives the inequality with the right-hand side of \eqref{E:6.4.7}
   replaced by 
  $$
  \epsilon^{\beta}\frac{\d (w, w')^{1-\beta}}{\d (w, z)^{2n + 1-\beta}}\quad \mbox{for}\ \ 0\leq \beta\leq 1\, .
  $$
Thus the power $\beta =1/2$ grants the conclusions \eqref{E:6.4.7} and \eqref{E:6.4.4},  but corresponding conclusions would also hold with $\epsilon^{1/2}$ replaced by $\epsilon^\beta$ for any $0<\beta<1$; on the other hand the cases $\beta =0$ and $\beta =1$ cannot be used, the latter because of the requirement \eqref{E:alphapositive}  that $\alpha$ (that is, $1-\beta$) be positive.

  \subsection{Proof of Proposition \ref{P:X}.} The proof follows the same lines as the argument above, and so we shall be brief. The only substantial difference is that we cannot avail ourselves of the gain in $\epsilon$ given by \eqref{E:6.3.3}.
  
  It is enough to show that the analogue of \eqref{Eq:star} holds, but with $\Ctrei$ replaced by $\Ctrsei$, since the difference $\Ctrei-\Ctrsei$ can be handled as we did in verifying the hypotheses \eqref{E:schur-1} and \eqref{E:schur-2} for the error term $\Atrei$ in the proof of Proposition \ref{P:6.2.2}.
   First we have the analogue of \eqref{E:6.4.6} with $\Delta^s_\epsilon (w, z)$ replaced by the single terms 
  $\ge(w, z)\chi_s(w, z)$ and $ \bar{\ge}(z, w)\chi_s(w, z)$.
  
  Next, the argument of Lemma \ref{L:6.5.1} shows that 
  $$
  |\Ctrsei (f)(z)|\ \lesssim\ 1\ + \int\limits_{w\in\bndry D}\!\!\! |\nabla f(w)|\,\d (w, z)^{-2n + 2}\,d\lambda (w)
  $$
  whenever $f$ is in $C^1(\bndry D)$ and $|f(w)|\leq 1$ for any $w\in\bndry D$.
  
  Thus, by Lemma \ref{L:general},
  $$
   |\Ctrsei (f)(z)|\ \lesssim\ 1\quad\mbox{and}\quad \|\Ctrsei f\|_{L^2(\bndry D, d\lambda)}\lesssim 1
  $$
  for any normalized bump function $f$.
  The same conclusion holds for $\Ctrsei$ replaced by $(\Ctrsei)^*$. 
  
  At this point, an application of the $T(1)$ theorem, as before, completes the proof of the proposition.

  \section{The Cauchy-Szeg\H o projection with respect to a more general measure}\label{S:last}
  We now pass to the Cauchy-Szeg\H o projection $\Sw$ defined with respect to a measure of the form 
  $\omega\,d\sigma$
  described 
  at the beginning of Part II. Throughout this section we will write $\omega\,d\sigma =\varphi\,d\lambda$,
   where
   $\varphi$ (equivalently, $\omega =\varphi\Lambda$) is a continuous strictly positive density on $\bndry D$, and we will
  use the upper script $^\dagger$
   to designate the adjoint with respect to the inner product $(\cdot, \cdot)_\omega$ of
    $L^2(\bndry D, \omega\,d\sigma)=L^2(\bndry D, \varphi\,d\lambda)$. 
 
 Thus  $\Sw$ is the orthogonal
    projection of $L^2$ onto $\mathcal H^2$ in the sense that 
    $$\Swd = \Sw\, .$$
    
     Our goal is to prove Theorem \ref{T:6.1.2}, that is the analogue of Theorem \ref{T:6.1.1} in which $\S$ is replaced by $\Sw$.
    
    \subsection{Outline of the proof of Theorem \ref{T:6.1.2}}
    We begin by making two simple observations. First,
    \begin{equation}\label{E:7.2}
    \Sw(I+\Ctred -\Ctre)=\Ctre\, .
    \end{equation}
    This is the analogue of the identity {\em(c)} that followed from Proposition \ref{P:6.2.1}, and is proved in the same way as that result, when we
        use a corresponding version  of Proposition \ref{P:6.2.1}
         (also proved in \cite{LS-5}), in which the Leray-Levi measure $d\lambda$ is now replaced by its weighted version $\varphi\, d\lambda$,
   which lead to the identities 
    $$\Sw\Ctre =\Ctre\quad {\mathrm{and}}\quad \Ctre\Sw= \Sw\, .$$
      The second observation concerns any bounded operator $T$ on $L^2(\bndry D, \varphi\, d\l) \approx 
    L^2(\bndry D, d\l)$ and will be applied to $T=\Ctre$. Let $T^*$ denote the adjoint of $T$ with respect to the inner product 
    $$
    (f, g)_\lambda = \int\limits_{\bndry D} f\,\bar{g}\, d\l\, ,
    $$
    and $\Td$ its adjoint with respect to the inner product
    $$
    (f, g)_\omega = \int\limits_{\bndry D} f\,\bar{g}\ \varphi\,d\l\, .
    $$
      Then we have that 
    \begin{equation}\label{E:7.3}
    \Td =\varphi^{-1}\,T^*\,\varphi\, .
    \end{equation}
    In fact, by the definition of $T^\dagger$ and $T^*$ we have
     $$
      \int\limits_{\bndry D}\! f\,\bar{(T^\dagger g)}\, \varphi d\lambda =
  \int\limits_{\bndry D}\! (Tf)\,\bar{g}\, \varphi d\lambda = 
  \int\limits_{\bndry D}\! f\,\bar{T^* (g\, \varphi)} d\lambda.
    $$
  Since this holds for all $f\in L^2(\bndry D)$, we get $T^*(g\,\varphi)  = \Td(g)\varphi$, for all $g\in L^2(\bndry D)$, which is merely a restatement of \eqref{E:7.3}.
    
    The main thrust of the proof of Theorem \ref{T:6.1.2} is carried by the following proposition. For any (complex-valued) continuous function $\varphi$ on $\bndry D$, consider the commutator 
    $$
    [\Ctrei,\varphi] = \Ctrei\,\varphi - \varphi\,\Ctrei\, .
    $$
    \begin{Prop}\label{P:7.1}
    Suppose $\varphi$ and $\epsilon$ are given. There is 
        $\bar{\iota}(\epsilon)>0$ so that if
    $\iota\leq\bar{\iota}(\eps)$, then
    \begin{equation}\label{E:7.4}
    \|\,[\Ctrei, \varphi]\,\|_{_{L^p\to L^p}} \lesssim \eps M_p\, ,\quad \mbox{for}\ \ 1<p<\infty\, .
    \end{equation}
    \end{Prop}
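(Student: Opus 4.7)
The plan is to decompose the multiplier $\varphi$ into a Lipschitz part plus a uniformly small part, and to exploit the fact that the kernel of $\Ctrei$ is supported in $\{\d(w,z)\lesssim \iota\}$. Given $\eps>0$, write $\varphi = \varphi_1+\varphi_2$ with $\varphi_1$ Lipschitz on $\bd D$ (with some Lipschitz constant $L_1$) and $\|\varphi_2\|_{L^\infty(\bd D)}\leq \eps$; such a splitting exists by density of $C^1(\bd D)$ (hence of Lipschitz functions) in $C(\bd D)$, via mollification in local coordinates on the compact $C^2$-hypersurface $\bd D$. For the rough part, Proposition \ref{P:X} applies directly:
\begin{equation*}
\|[\Ctrei,\varphi_2]\|_{L^p\to L^p}\ \leq\ 2\|\varphi_2\|_{L^\infty}\|\Ctrei\|_{L^p\to L^p}\ \lesssim\ \eps M_p,
\end{equation*}
provided $\iota\leq \iota(\eps)$ is chosen as in Proposition \ref{P:X}.

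For the smooth part, the kernel of $[\Ctrei,\varphi_1]$ is $K(w,z):= K^\iota_\eps(w,z)(\varphi_1(w)-\varphi_1(z))$, where $K^\iota_\eps$ denotes the kernel of $\Ctrei$. It is supported in $\{\d(w,z)\lesssim \iota\}$, and there
\begin{equation*}
|K(w,z)|\ \lesssim\ L_1\,|w-z|\,\d(w,z)^{-2n}\ \lesssim\ L_1\,\d(w,z)^{-2n+1},
\end{equation*}
by \eqref{E:2.14} and the size bound $|K^\iota_\eps|\lesssim \d^{-2n}$. Integrating via \eqref{E:2.16} with $\beta=1$ yields the Schur-type bounds
\begin{equation*}
\sup_{z\in\bd D}\int_{\bd D}|K(w,z)|\,d\l(w)\ +\ \sup_{w\in\bd D}\int_{\bd D}|K(w,z)|\,d\l(z)\ \lesssim\ L_1\,\iota,
\end{equation*}
and Schur's test then gives $\|[\Ctrei,\varphi_1]\|_{L^p\to L^p}\lesssim L_1\,\iota$ for every $1\leq p\leq\infty$.

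To conclude, fix $\varphi_1$ (and therefore $L_1$), and then choose $\bar{\iota}(\eps)\leq \iota(\eps)$ so small that $L_1\,\bar{\iota}(\eps)\leq \eps$; combining the two bounds proves \eqref{E:7.4}. The main subtlety — and essentially the only place where one could slip — is the order of quantifiers: $\eps$ comes first and fixes $\varphi_1$ (hence $L_1$, which may be arbitrarily large as $\eps\to 0$); only afterwards is $\bar{\iota}(\eps)$ chosen small enough to absorb this factor. Notably, no $T(1)$-type cancellation analysis is needed for the commutator itself: the gain in $\eps$ arises entirely from the \emph{size} improvement of the kernel $K(w,z)$ produced by continuity of $\varphi$ on the small support of $\Ctrei$, in sharp contrast with the proof of Proposition \ref{P:6.2.2}, where the gain $\eps^{1/2}$ came from a delicate $T(1)$ analysis of $\Ctrsei-(\Ctrsei)^*$.
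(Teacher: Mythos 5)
Your proof is correct, but it takes a genuinely different route from the paper. The paper proves Proposition \ref{P:7.1} via the decomposition Lemma \ref{L:7.3}: it partitions $\Cn$ into cubes of side $\p\approx\iota$, notes that the near-diagonal support of the kernel of $\Ctrei$ (via \eqref{E:2.14}) kills all blocks $\mathds{1}_{_k}T\mathds{1}_{_j}$ except those coming from touching cubes, and on each surviving block replaces $\varphi$ by $\varphi-\varphi(z_k)$ (the constant commutes away), so that uniform continuity of $\varphi$ gives the block bound $A=2\eps\|\Ctrei\|_{L^p\to L^p}\lesssim\eps M_p$ by Proposition \ref{P:X}; Lemma \ref{L:7.3} then sums the blocks. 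You instead split $\varphi=\varphi_1+\varphi_2$ with $\varphi_1$ Lipschitz and $\|\varphi_2\|_\infty\leq\eps$, dispose of $\varphi_2$ by the same trivial bound $2\|\varphi_2\|_\infty\|\Ctrei\|_{L^p\to L^p}$, and for $\varphi_1$ use the classical commutator gain $|\varphi_1(w)-\varphi_1(z)|\lesssim L_1|w-z|\lesssim L_1\,\d(w,z)$ to make the kernel absolutely integrable with Schur bounds $O(L_1\iota)$ over the support $\{\d(w,z)\lesssim\iota\}$, then shrink $\iota$ to beat $L_1$. Both arguments rest on the same two pillars (Proposition \ref{P:X} and the near-diagonal support of the truncation), and your quantifier order is the right one. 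Two small points. First, your identification of the kernel of $[\Ctrei,\varphi_1]$ as $K^\iota_\eps(w,z)(\varphi_1(w)-\varphi_1(z))$ should be justified from \eqref{E:Ctr-rep} (or \eqref{E:7.star}): the diagonal contributions cancel exactly in the commutator, and since $\varphi_1$ is H\"older in the sense of \eqref{E:3.4} the resulting integral converges absolutely, so the representation holds on the dense class and extends. Second, the implied constant in $|K^\iota_\eps(w,z)|\lesssim\d(w,z)^{-2n}$ depends on $\eps$ (through $c_\eps$ entering $\deebar G$), so your Lipschitz-part bound is really $C_\eps L_1\iota$; this is harmless since $\bar\iota(\eps)$ is chosen last, but it should be absorbed explicitly along with $L_1$. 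What each approach buys: yours is shorter and avoids the combinatorial lemma entirely, at the cost of an approximation step; the paper's Lemma \ref{L:7.3} works directly with the modulus of continuity of $\varphi$ and is packaged as a reusable general tool for operators with kernels supported near the diagonal.
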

    Here $M_p= p+ p/(p-1)$ is as in Proposition \ref{P:6.2.2}.
    We prove Proposition \ref{P:7.1} in Section \ref{SS:7.3} below, but now note that this assertion immediately implies the analogous statement with $\Ctrei$ replaced $(\Ctrei)^*$. Indeed, 
    $$
    [(\Ctrei)^*, \varphi] = (\Ctrei)^*\,\varphi -\varphi\,(\Ctrei)^* =
    \big(\bar\varphi\,\Ctrei -\Ctrei\,\bar\varphi\big)^* =
    -\big([\Ctrei,\bar\varphi]\big)^*.
    $$
   Thus \eqref{E:7.4} (with $\varphi$ replaced by $\bar\varphi$, and with $p$ and $M_p$ replaced by $p'$ and $M_{p'} = M_p$ respectively,  where $1/p+1/p'=1$) implies by duality that 
   \begin{equation}\label{E:7.5}
   \|\,[(\Ctrei)^*, \varphi]\,\|_{_{L^p\to L^p}}\lesssim \eps\, M_p,\quad \mbox{for}\ \  1<p<\infty\, .
   \end{equation}
   
   The significant consequence of \eqref{E:7.5} is the analogue of Proposition \ref{P:6.2.2}, in which
   $(\Ctrei)^*$ is replaced by $(\Ctrei)^\dagger$. Namely, for any $\eps>0$, there is an $\iota (\eps)>0$ so that $\iota\leq \iota (\eps)$ implies
   \begin{equation}\label{E:7.6}
   \|\,\Ctrei -\Ctreid\,\| _{_{L^p\to L^p}}\lesssim \eps^{1/2}\, M_p,\quad \mbox{for}\ \  1<p<\infty\, .
   \end{equation}
   
   To see why this is the case, recall by \eqref{E:7.3} that 
   $(\Ctrei)^\dagger = \varphi^{-1}(\Ctrei)^*\varphi$. Thus, 
   $
   \Ctrei -\Ctreid = \Ctrei - \varphi^{-1}(\Ctrei)^*\varphi\, .
   $
   But
   $$
   \varphi^{-1}(\Ctrei)^*\varphi - (\Ctrei)^*=
   \varphi^{-1}\big((\Ctrei)^*\varphi - \varphi(\Ctrei)^*\big)\,.
   $$
   So 
   $$
   \Ctrei -\Ctreid = 
   \Ctrei -(\Ctrei)^* +\varphi^{-1}[\varphi, (\Ctrei)^*]\, .
   $$
   Hence
   $$
   \|\,\Ctrei -\Ctreid\,\| _{_{L^p\to L^p}}\leq \|\,\Ctrei -(\Ctrei)^*\,\| _{_{L^p\to L^p}} +
   \sup|\varphi^{-1}|\,\|\,[\Ctrei,\,\varphi]\,\| _{_{L^p\to L^p}}\, .
   $$
   By Proposition \ref{P:6.2.2} and \eqref{E:7.5} the righthand side of the above is majorized by a multiple of $\eps^{1/2}M_p + \eps M_p$, which gives us \eqref{E:7.6}.
   
   With \eqref{E:7.6} in hand, the proof of Theorem \ref{T:6.1.2} follows the reasoning given in Section 
   \ref{SS:mainthm} after Proposition \ref{P:6.2.2}. Indeed, by \eqref{E:7.2} we get the analogue of 
   \eqref{E:6.2.1}, namely
   $$
   \Ctre +\Sw\Etreid - \Sw\Etrei = \Sw\big(I+\Ctrei-\Ctreid\big)\, ,
   $$
   since $\Ctre = \Ctrei + \Etrei$. The rest of the proof of of Theorem \ref{T:6.1.2} is then an almost word-for-word repetition of the argument after \eqref{E:6.2.1}. Here we invoke \eqref{E:7.6}, which guarantees that $I+\Ctrei -\Ctreid$ is invertible in $L^p$ when $\eps$ and $\iota$ are taken to be sufficiently small. With this, Theorem \ref{T:6.1.2} is proved (assuming the validity of Proposition \ref{P:7.1}).
   \subsection{A decomposition lemma}\label{SS:7.2}
   The proof of Proposition \ref{P:7.1} requires the following decomposition lemma, valid for a general class of operators. We consider a partition of $\Cn=\mathbb R^{2n}$ into disjoint cubes of side-length $\p$, given by
   $$
   \Cn = \cup_{k\in\mathbb Z^{2n}}Q^{\p}_k\, .
   $$
   When $\p =1$, we take $Q^1_k = Q^1_0 + k$, where $k$ ranges over the lattice points $\mathbb Z^{2n}$ of $\mathbb R^{2n}$, and $Q^1_0$ is the unit cube in $\mathbb R^{2n}$ with center at the origin. For general $\p>0$, we set $Q^{\p}_k = \p\,Q^1_k$. We say that the cube $Q^{\p}_k$
   {\em touches the cube} $Q^{\p}_j$ if $\bar{Q^{\p}_k}$ and $\bar{Q^{\p}_j}$ have non-empty intersection; equivalently, if $dist(Q^{\p}_j, Q^{\p}_k)\leq \p$ (with $dist$ the Euclidean distance).
   One notes that for each $k$ there are exactly $N=3^{2n}$ cubes $Q^{\p}_j$ that touch $Q^{\p}_k$. 
   These facts are easily verified by inspection when one scales back to the case $\p =1$. 
   
   We now revert to our domain $D\subset\Cn$. For a fixed $\p>0$ we write $\mathds{1}_{_k}$ for the characteristic function of $Q^{\p}_k\cap\bndry D$. Thus what has been said above implies that 
   $$
   \sum\limits_{j\in\mathbb Z^{2n}}\mathds{1}_{_j} =1\quad \mbox{on}\quad \bndry D\, ,\quad \mbox{and}
   $$
   \begin{equation}\label{E:7.7}
   \sum\limits_k\bigg(\sum\limits_{Q^{\p}_j\ \mbox{touches}\ Q^{\p}_k}\mathds{1}_{_j}\bigg) = N\quad\mbox{on}\quad \bndry D\, .
   \end{equation}
   \begin{Lem}\label{L:7.3} Suppose $T$ is a bounded operator on $L^p(\bndry D)$. 
   Fix $\p>0$
   
   \noindent and assume
   \medskip
   
   $(i)\quad \mathds{1}_{_k} T \mathds{1}_{_j}=0,$\qquad\qquad\quad if the the cubes $Q^{\p}_j$ and $Q^{\p}_k$ do not touch.
   \medskip
   
   $(ii)\quad\! \|\mathds{1}_{_k} T \mathds{1}_{_j}\|_{_{L^p\to L^p}}\leq A,$\quad otherwise.
   \medskip
   
 \noindent   Then 
   \begin{equation*}
   \|T\|_{_{L^p\to L^p}} \leq A\, N\, .
   \end{equation*}
   \end{Lem}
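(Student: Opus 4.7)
The plan is to decompose $T$ according to the partition into cubes, exploit that only touching pairs contribute, and then carefully track the combinatorial counting when rearranging the resulting double sum.

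First I would write, for any $f \in L^p(\bndry D)$,
\[
Tf = \sum_{k} \mathds{1}_{_k} Tf = \sum_k \sum_j \mathds{1}_{_k} T \mathds{1}_{_j} f,
\]
and use hypothesis $(i)$ to collapse the inner sum to just those $j$ for which $Q^{\p}_j$ touches $Q^{\p}_k$ (there are at most $N = 3^{2n}$ such $j$ for each $k$). Since the functions $\{\mathds{1}_{_k}\}_k$ have pairwise disjoint supports, the $L^p$ norm separates as
\[
\|Tf\|_p^p = \sum_k \|\mathds{1}_{_k} Tf\|_p^p.
\]

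Next I would estimate each term on the right. For fixed $k$, the triangle inequality together with hypothesis $(ii)$ gives
\[
\|\mathds{1}_{_k} Tf\|_p \leq \sum_{\,j\ \text{touches}\ k} \|\mathds{1}_{_k} T \mathds{1}_{_j} f\|_p \leq A \sum_{\,j\ \text{touches}\ k} \|\mathds{1}_{_j} f\|_p.
\]
Because there are at most $N$ nonzero terms, H\"older's inequality in the sum over $j$ yields
\[
\|\mathds{1}_{_k} Tf\|_p^p \leq A^p N^{p/p'} \sum_{\,j\ \text{touches}\ k} \|\mathds{1}_{_j} f\|_p^p,
\]
where $1/p + 1/p' = 1$.

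Finally I would sum over $k$ and swap the order of summation. The key combinatorial observation is that the relation ``$Q^{\p}_j$ touches $Q^{\p}_k$'' is symmetric, and each fixed $j$ is touched by exactly $N$ cubes $Q^{\p}_k$. Hence
\[
\sum_k \sum_{\,j\ \text{touches}\ k} \|\mathds{1}_{_j} f\|_p^p = N \sum_j \|\mathds{1}_{_j} f\|_p^p = N \|f\|_p^p,
\]
again by disjointness of supports. Combining,
\[
\|Tf\|_p^p \leq A^p N^{p/p'} \cdot N \|f\|_p^p = (AN)^p \|f\|_p^p,
\]
using $p/p'+1 = p$. This gives $\|T\|_{L^p\to L^p} \leq AN$, as required.

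The argument is essentially a bookkeeping exercise, so there is no serious obstacle; the only point requiring care is the H\"older exponent, making sure one picks up exactly the factor $N^{p/p'}$ so that the total combinatorial loss after summing in $k$ comes out to $N^p$ rather than a worse power. An alternative route would be to interpolate between the trivial $L^\infty$ and $L^1$ bounds $\sup_k \sum_j \|\mathds{1}_{_k} T \mathds{1}_{_j}\|_{\mathrm{op}} \leq AN$ and $\sup_j \sum_k \|\mathds{1}_{_k} T \mathds{1}_{_j}\|_{\mathrm{op}} \leq AN$ via the Schur test \eqref{E:schur-1}--\eqref{E:schur-2} applied to operator-valued entries, but the direct approach above is cleaner.
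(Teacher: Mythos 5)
Your argument is correct and is essentially the paper's own proof: both split $\|Tf\|_p^p$ over the disjoint cubes, use hypothesis (i) to reduce to at most $N$ touching terms, apply H\"older (your $N^{p/p'}=N^{p-1}$ is exactly the paper's power-mean factor), invoke (ii), and finish with the counting identity that each $Q^{\p}_j$ is touched by exactly $N$ cubes. The only cosmetic difference is that you apply the triangle inequality and H\"older at the level of $L^p$ norms rather than pointwise, which is an equivalent rearrangement.
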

   Here $\mathds{1}_{_k} T \mathds{1}_{_j}(f) = \mathds{1}_{_k} T (f \mathds{1}_{_j})$.
   \begin{proof}We begin by observing that
   $$
   \int\limits_{\bndry D}|Tf|^p=\sum\limits_k\int\limits_{Q^{\p}_k\cap\bndry D}\!\! |Tf|^p\, .
   $$
   However, 
   $$
   \int\limits_{Q^{\p}_k\cap\,\bndry D}\!\! |Tf|^p =
   \int\limits_{\bndry D}\mathds{1}_{_k}\, |Tf|^p\, ,
   $$
   while 
   $$
   \mathds{1}_{_k} Tf = \mathds{1}_{_k}\sum\limits_{j} T(f\mathds{1}_{_j})\, ,
   $$
   since $\sum\mathds{1}_{_j}=1$ on $\bndry D$. Now by $(i)$, $\mathds{1}_{_k} T(f\mathds{1}_{_j}) =0$ unless $Q^{\p}_k$ touches 
   $Q^{\p}_j$, and so at most $N$ terms in the above sum are non-zero. Moreover we always have
   $\big|\sum a_j\big |^p\leq N^{p-1}|\sum |a_j|^p$, if at most $N$ of the $a_j$ are non-zero, as H\"older
    inequality shows. Thus
    $$
   \mathds{1}_{_k} |Tf|^p =
   \mathds{1}_{_k}\big|\sum\limits_jT(f\mathds{1}_{_j})\big|^p
   \leq N^{p-1}\mathds{1}_{_k}^p\!\!\!\sum\limits_{Q^{\p}_j\ \mbox{touches}\ Q^{\p}_k}
   \!\!\!|T(f\mathds{1}_{_j})|^p\, .
   $$
   Hence 
     $$
   \int\limits_{\bndry D}|Tf|^p\
   \leq\
   N^{p-1}\!\!\!\!\!\!\sum\limits_{Q^{\p}_j\ \mbox{touches}\ Q^{\p}_k}\ \ \ \int\limits_{\bndry D}
   |\mathds{1}_{_k}T(f\mathds{1}_{_j})|^p\ \leq	
   $$
   $$
   \leq
   N^{p-1}A^p\!\!\!\!\!
   \sum\limits_{Q^{\p}_j\ \mbox{touches}\ Q^{\p}_k}\ \ \int\limits_{\bndry D}
   |f|^p|\mathds{1}_{_j}|^p\, ,
   $$
   and by \eqref{E:7.7} and hypothesis {\em (ii)} the latter equals 
   $$
   N^{p-1} N A^p \sum\limits_j\int\limits_{\bndry D}|f|^p\mathds{1}_{_j}\ =\ 
   N^pA^p\int\limits_{\bndry D} |f|^p.
   $$
   The lemma is therefore proved.
   \end{proof}
   \subsection{Completion of the proof of Theorem \ref{T:6.1.2}}\label{SS:7.3}
   We will apply the lemma above to the operator $T= [\Ctrei, \varphi]$. So given a continuous function
   $\varphi$ and an $\eps>0$, we will show that we can pick a $\p>0$ and an $\iota (\epsilon)>0$, so that if $\iota\leq \iota (\eps)$, then $T$ satisfies the hypotheses {\em (i)} and {\em (ii)} of Lemma \ref{L:7.3}, with $A\approx \eps$.

   To begin with, we note that  if $z$ is not in the support of $f$, then
   \begin{equation}\label{E:7.star}
   \Ctrei(f)(z) = \int\limits_{w\in\bndry D}\!\!\!
   C_\epsilon (w, z)\chi_{_{s}}(w, z)f(w)\, d\l (w), 
   \end{equation}
   where $C_\epsilon (w, z)$ is the kernel of $\Ctre$ (see \eqref{E:Ctr-rep} and recall that $\Ctre$ is denoted $\Ctr$ in Section \ref{S:4}), and $\chi_{_{s}}(w, z)$ is the symmetrized cut-off function that occurred in \eqref{E:6.4.1a}.
  
  Next we consider  $\mathds{1}_{_k}T\mathds{1}_{_j}$ and observe that with the right choice of $\p$, the truncated operator 
   $\mathds{1}_{_k}T\mathds{1}_{_j}$ vanishes whenever $Q^{\p}_j$ and $Q^{\p}_k$ do not touch. Indeed when $Q^{\p}_j$ and $Q^{\p}_k$ do not touch, if $z\in Q^{\p}_k$ then $z$ does not lie in the support of 
   $f\mathds{1}_{_j}$. Hence by \eqref{E:7.star} we have
   $$
   \mathds{1}_{_k}(z)\,\Ctrei(f\mathds{1}_{_j})(z) =
   \mathds{1}_{_k}(z)\!\!\int\limits_{\bndry D} C_\epsilon (w, z)\chi_{_{s}}(w, z)f(w)\mathds{1}_{_j}(w)\, d\l (w)\, .
   $$
   There is a similar formula with $f$ replaced by $f\varphi$. Thus for either 
   $\mathds{1}_{_k}(z)\varphi (z)\Ctrei(f\mathds{1}_{_j})(z)$ or
    $\mathds{1}_{_k}(z)\Ctrei(f\mathds{1}_{_j}\varphi)(z)$ not to vanish, there must be a $z\in Q^{\p}_k$ and a $w\in Q^{\p}_j$ with $dist(w, z)\leq \p$, (given the support condition of $\chi_{_s}(w, z)$).
    
    But since $Q^{\p}_j$ and $Q^{\p}_k$ d not touch, necessarily $|z-w|>\p$, and this combined with the inequality $dist(w, z) = |w-z|\leq c\d(w, z)$, (see \eqref{E:2.14}) give a contradiction 
    to $\p\geq c\iota$. So we fix $\p$ with $\p = c\iota$, then this guarantees hypothesis {\em (i)}
     of the lemma, for our choice of $T$. Next we prove that if $\p$ and $\iota$ are chosen sufficiently small, then hypothesis {\em (ii)} is satisfied with $A\approx \eps$. To see this we let $z_k$ denote the center of the cube $Q^{\p}_k$. Then $z\in Q^{\p}_k$ implies that $|z-z_k|\leq \sqrt{k}\p$. Also if $Q^{\p}_k$ touches $Q^{\p}_j$, then $|w-z_k|\leq (\sqrt{k} + 1)\p$ whenever 
     $w\in Q^{\p}_j$. As a result, the uniform continuity of $\varphi$ grants that 
     \begin{equation}\label{E:7.9}
     \sup\limits_{z\in Q^{\p}_k}|\varphi (z) -\varphi (z_k)|\leq \eps \qquad \mbox{and}\qquad
     \sup\limits_{w\in Q^{\p}_j}|\varphi (w) -\varphi (z_k)|\leq \eps\, ,
     \end{equation}
    as long as $\p$ is made sufficiently small (in terms of $\eps$).
    
    Now write $\varphi = \varphi_k +\psi_k$, where $\varphi_k (z)= \varphi (z) -\varphi (z_k)$, and
    $\psi_k(z) =\varphi (z_k)$, for all $z$. Then 
    $[\Ctrei, \varphi] = [\Ctrei, \varphi_k] +   [\Ctrei, \psi_k]$. But $\psi_k$ is a constant, so
    $[\Ctrei, \psi_k]=0$, and hence
    $$
    \mathds{1}_{_k}T\mathds{1}_{_j} = \mathds{1}_{_k}[\Ctrei, \varphi]\mathds{1}_{_j} =
    \mathds{1}_{_k}[\Ctrei, \varphi_k]\mathds{1}_{_j} =
    \mathds{1}_{_k}\Ctrei (\varphi_k\mathds{1}_{_j}) - 
    \mathds{1}_{_k}\varphi_k\,\Ctrei\mathds{1}_{_j}\, .
    $$
   However   \ \ 
   $\sup\limits_{z\in\bndry D}|(\varphi_k\mathds{1}_{_j}) (w)|\leq 
   \sup\limits_{z\in Q^{\p}_k}|\varphi (z) -\varphi (z_k)|$. Therefore \eqref{E:7.9} gives 
    $$
    \|\mathds{1}_{_k}T\mathds{1}_{_j}f\|_{_{L^p}}\,\leq\, 2\eps\|\,\Ctrei\,\|_{_{L^p\to L^p}}
    \|f\|_{_{L^p}}.
    $$
    Thus hypothesis {\em (ii)} holds with $A= 2\eps\|\,\Ctrei\,\|_{_{L^p\to L^p}}$. Finally,
    Proposition \ref{P:X} ensures that $\|\Ctrei\|_{L^p\to L^p}\lesssim M_p$, if $\iota$ is sufficiently small. Thus Lemma \ref{L:7.3} implies $\|[\Ctrei, \varphi]\|_{L^p\to L^p}\lesssim \eps M_p$, thus proving Proposition \ref{P:7.1}. The proof of Theorem \ref{T:6.1.2} is thus now concluded.

\end{document}